\theoremstyle{plain}
\newtheorem{thm}{Theorem}[section]
\theoremstyle{plain}
\newtheorem{lem}[thm]{Lemma}
\newtheorem{cor}[thm]{Corollary}
\theoremstyle{definition}
\newtheorem{rem}{Remark}[section]
\newcommand{\loc}{{\mathrm{loc}}}
\newcommand{\rn}{\mathbb{R}^{N}}
\newcommand{\hn}{\mathbb{H}^{N}}
\newcommand{\dvh}{\mbox{d}v_{\mathbb{H}^{N}}}
\newcommand{\hnn}{\mathbb{H}^{2}}
\newcommand{\authorfootnotes}{\renewcommand\thefootnote{\@fnsymbol\c@footnote}}%
\newcommand{\Hmm}[1]{\leavevmode{\marginpar{\tiny%
$\hbox to 0mm{\hspace*{-0.5mm}$\leftarrow$\hss}%
\vcenter{\vrule depth 0.1mm height 0.1mm width \the\marginparwidth}%
\hbox to
0mm{\hss$\rightarrow$\hspace*{-0.5mm}}$\\\relax\raggedright #1}}}
\numberwithin{equation}{section} \allowdisplaybreaks
\begin{document}
        \title[An optimal improvement for the Hardy inequality]{An optimal improvement for the Hardy inequality \\ on the hyperbolic space and related manifolds}

\date{}

\author[Elvise BERCHIO]{Elvise BERCHIO}
\address{\hbox{\parbox{5.7in}{\medskip\noindent{Dipartimento di Scienze Matematiche, \\
Politecnico di Torino,\\
        Corso Duca degli Abruzzi 24, 10129 Torino, Italy. \\[3pt]
        \em{E-mail address: }{\tt elvise.berchio@polito.it}}}}}

        \author[Debdip GANGULY]{Debdip GANGULY}
\address{\hbox{\parbox{5.7in}{\medskip\noindent{Department of Mathematics,\\
 Indian Institute of Science Education and Research,\\
 Dr. Homi Bhabha Road, Pashan\\
        Pune 411008, India. \\[3pt]
         \em{E-mail address: }{\tt debdipmath@gmail.com}}}}}

\author[Gabriele GRILLO]{Gabriele GRILLO}
\address{\hbox{\parbox{5.7in}{\medskip\noindent{Dipartimento di Matematica,\\
Politecnico di Milano,\\
   Piazza Leonardo da Vinci 32, 20133 Milano, Italy. \\[3pt]
        \em{E-mail addresses: }{\tt
          gabriele.grillo@polimi.it}}}}}

      \author[Yehuda PINCHOVER]{Yehuda PINCHOVER}
\address{\hbox{\parbox{5.7in}{\medskip\noindent{Department of Mathematics, \\
Technion - Israel Institute of Technology,\\
   Haifa 3200003, Israel. \\[3pt]
        \em{E-mail address: }{\tt pincho@technion.ac.il}}}}}

\date{\today}

\keywords{Hyperbolic space, optimal Hardy inequality, extremals}

\subjclass[2010]{ 46E35, 26D10, 31C12}

\begin{abstract}

 We prove \emph{optimal} improvements of the Hardy inequality on the hyperbolic space.
 Here, optimal means that
 the resulting operator is \emph{critical} in the sense of \cite{pinch}, namely the associated inequality cannot be further improved.
 Such inequalities arise from more general, \emph{optimal} ones valid for the operator $ P_{\lambda}:= -\Delta_{\hn} - \lambda$ where $0 \leq  \lambda \leq \lambda_{1}(\hn)$ and $\lambda_{1}(\hn)$ is the bottom of the $L^2$ spectrum of $-\Delta_{\hn} $, a problem that had been studied in \cite{BGG} only for the operator $P_{\lambda_{1}(\hn)}$. A different, critical and new inequality on $\hn$, locally of Hardy type, is also shown. Such results have in fact greater generality since they are proved on general Cartan-Hadamard manifolds under curvature assumptions, possibly depending on the point. Existence/nonexistence of extremals for the related Hardy-Poincar\'e inequalities are also proved using
 concentration-compactness technique and a Liouville comparison theorem. As applications of our inequalities we obtain an improved Rellich inequality and we derive a quantitative version of Heisenberg-Pauli-Weyl uncertainty principle for the operator $P_\lambda.$
\end{abstract}

%Our method is  based on supersolution technique.
%Moreover we prove an explicit constant in front of $L^2$ remainder term which in our knowledge is greater than the existing known bounds.
\maketitle

 \section{Introduction}

 The Hardy inequality on (Euclidean) domains has been studied intensively for the last few decades. Much of the interest has centered on \emph{optimal} improvements of the inequality and the effect of the domain on the Hardy constant. Its generalization to Riemannian manifolds was intensively pursued  after the seminal work
 of Carron \cite{Carron}, see for instance \cite{BGG, BAGG, Dambrosio, Kombe1, Kombe2, AKR, Yang}. Let $(M, g)$ be a Riemannian manifold and let $\varrho(x)$ be a weight function satisfying the Eikonal equation $|\nabla_g \varrho| = 1$ and
 $\Delta_{g} \varrho \geq \frac{C}{\varrho}$ where $C > 0$ a positive constant. By \cite{Carron} there holds

 \begin{equation}\label{General_Hardy_inequality}
 \int_{M} |\nabla_{g} u|^2 \, {\rm d}v_g \geq \left(\frac{C-1}{2} \right)^2   \int_{M} \frac{u^2}{\varrho^2} \, {\rm d}v_g \quad \forall  \ u \in C_{c}^{\infty}(M \setminus \varrho^{-1}\{ 0 \}).
 \end{equation}

 In the case of a Cartan-Hadamard manifold $M$ of dimension $N$ (namely, a manifold which is complete, simply-connected, and has everywhere non-positive sectional curvature),
 the geodesic distance function ${\rm d}(x, x_0),$ where $x_0 \in M$, satisfies all the assumptions of the weight $\varrho$ and the above inequality holds with \emph{best} constant $\left(\frac{N-2}{2} \right)^2$, see \cite{AKR}. In particular, considering the most important example of Cartan-Hadamard manifold, namely the hyperbolic space $\hn$, inequality \eqref{General_Hardy_inequality} reads

% This paper originates from the study of Hardy inequality on the hyperbolic space which reads

 \begin{equation}\label{hardy}
 \int_{\hn} |\nabla_{\hn} u|^2 \ \dvh \geq \left(\frac{N-2}{2} \right)^2   \int_{\hn} \frac{u^2}{r^2} \ \dvh, \quad \forall \ u \in C_{c}^{\infty}(\hn \setminus \{ x_0 \})
 \end{equation}
 with $r:={\rm d}(x, x_0)$ and $x_0 \in \hn$ is a fixed pole.

  \medskip

   %Inequality \eqref{hardy} holds true on more general manifolds as well, for further details  we refer to the seminal paper of Carron \cite{Carron}  and subsequent

The effect of curvature has been exploited in \cite{Kombe1, Kombe2,AKR, Yang} to improve inequality \eqref{General_Hardy_inequality} (in the sense of adding nonnegative terms in the right side of the inequality) on Cartan-Hadamard manifolds. This is in contrast to what happens in the Euclidean setting where the operator $-\Delta_{\mathbb{R}^{N}} - \left(\frac{N-2}{2} \right)^2  \frac{1}{|x|^2} $ is known to be  \emph{critical} in $\mathbb{R}^{N} \setminus \{ 0 \}$ (see \cite{pinch}) and improvements of such quadratic form inequality are not possible. However, there is a huge literature about improved Hardy inequalities on \emph{bounded} Euclidean domains after the seminal works of Brezis and Marcus \cite{BrezisM} and Brezis and Vazquez \cite{Brezis}. See also \cite{GFT, BFT2, BT, FMT, FT, FTT, gaz, GM} and references therein. We now describe qualitatively the contributions given in the present paper.

\noindent $\bullet$ \bf Critical improvements of the Hardy inequality with optimal constant\rm.  It is known that the operator, $-\Delta_{\hn} - \left(\frac{N-2}{2} \right)^2  \frac{1}{r^2} $ is \emph{subcritical} operator in $\hn \setminus \{ x_0 \},$ and the existence of a remainder term for inequality \eqref{hardy} involving a multiple of the $L^2$-norm is also known by \cite{Yang}. Furthermore, a new type of improvement of \eqref{hardy}, and more generally of \eqref{General_Hardy_inequality} on Cartan-Hadamard manifolds, has been recently provided in \cite{AKR} by showing that more curvature implies more powerful improvements, see Remark \ref{yang} below. Nevertheless, as far we are aware, the criticality of the resulting \lq\lq improved" operators has never been studied.

\medskip

The first goal of the present paper is to address this topic by looking for a weight $V \geq 0$ such that the following improved Hardy inequality holds true
  \begin{equation}\label{initial}
 \int_{\hn} |\nabla_{\hn} u|^2 \ \dvh \geq \left(\frac{N-2}{2} \right)^2  \int_{\hn} \frac{u^2}{r^2} \ \dvh  + \int_{\hn} Vu^2 \  \dvh \quad \forall u \in C_{c}^{\infty}(\hn)\,
   \end{equation}
and the associated operator $-\Delta_{\hn} - \left(\frac{N-2}{2} \right)^2  \frac{1}{r^2} -V$ is\emph{ critical} in $\hn \setminus \{ x_0 \}$. Hence, the inequality is not true when $V$ is replaced by $W \ge V$, $W\not=V$, and this is the reason why we will call such $V$ an \emph{optimal}  weight. In this respect we note that for any second-order elliptic subcritical operator $P$ in $\hn$, and any compactly supported, positive perturbation  $V$ of $P$ in $\hn$, there always exists $\lambda_0$ s.t. $P-\lambda_0V$ is critical in $\hn$ (see \cite{pinch-duke}). So qualitatively we aim at finding a potential that is as large as possible at infinity and such that inequality \eqref{initial} is not improvable.
\medskip\par
In Corollary~\ref{improved-hardy_1} below we show that an optimal radial weight $V\geq 0$ such that \eqref{initial} holds is
    \begin{equation}\label{hardy_weight}
  V(r) = (N-2) + \frac{(N-2)(N-3)}{4} \, g(r)\,,
     \end{equation}
where $g(r)= \frac{r \coth r - 1}{r^2} > 0$ and $r>0$. In particular, $g$ satisfies $g(r) \sim \frac{1}{3} $ as $r \rightarrow 0^+$ and $g(r) \sim  \frac{1}{r}$ as $r \rightarrow +\infty.$ It is clear from \eqref{hardy_weight} that $V(r)$ yields, as a byproduct, an $L^2$ improvement of the Hardy inequality \eqref{hardy} and we point out that,  to our knowledge, the constant  $N-2$ we get in front of the $L^2$-term is greater than the existing known bounds in literature, cf. \cite{Yang}. Though, except for $N= 3$, the optimality of the weight $V$ does not imply that $N-2$ is the \emph{best} constant in obvious sense. It is also interesting to note that our optimal inequality is closely related
to the improved Hardy inequality studied in \cite{AKR}, we refer to Remark \ref{yang} for a detailed discussion. Here we only mention that the main result on the Hardy inequality given in \cite{AKR}, when considered on $\hn$, follows as a particular case of our results. Also the extension of our results to more general Cartan-Hadamard manifolds is obtained under less restrictive assumptions than in \cite{AKR}. Indeed, we only require curvature bound in the \emph{radial direction}, see Section \ref{manifolds} and, besides, we allow for curvature bounds varying with the point.

%The extension of our results to more general Cartan-Hadamard manifolds requires a technical assumption which is not assumed in \cite{AKR}

  \medskip

\noindent $\bullet$ \bf Hardy-type improvements of the Poincar\'e inequality\rm. It is worth noting that the weight $V(r)$ in \eqref{hardy_weight} originates from a suitable family of Hardy weights improving Poincar\'e-type inequalities on $\hn$ with $N\geq 2$. Indeed, the validity of the Poincar\'e inequality (or $L^2$-gap inequality, see \cite{NN} for generalizations) on $\hn$ with best constant

 \begin{equation}\label{poincare}
  \lambda_{1}(\hn): = \inf_{u \in C_{c}^{\infty}(\hn)  \setminus \{ 0 \}} \dfrac{\int_{\hn} |\nabla_{\hn} u|^2 \ \dvh}{\int_{\hn} u^2 \ \dvh}=\left(\frac{N-1}{2} \right)^2 \,,
  \end{equation}
makes it natural to inquire whether, for any given $\lambda \leq   \lambda_{1}(\hn)$, a Hardy-type inequality associated to the  family of nonnegative operators $P_{\lambda}:=-\Delta_{\hn} -\lambda$ holds. More precisely, for any $\lambda \leq   \lambda_{1}(\hn)$, one looks for functions $V_{\lambda} \geq 0$ such that the following inequality holds true
  \begin{equation}\label{gap_lambda}
 \int_{\hn}|\nabla_{\hn} u|^2\, {\rm d}v_{\hn} -\lambda \int_{\hn}  u^2\, {\rm d}v_{\hn}\geq \int_{\hn} V_{\lambda}\,u^2 \  \dvh\ \ \ \forall u\in C_c^\infty(\hn),
 \end{equation}
 and the operator $P_{\lambda}-V_{\lambda}$ is critical in $\hn \setminus \{ x_0 \}$ so that \eqref{gap_lambda} does not hold for any $W_{\lambda} \ge V_{\lambda}$, $W_{\lambda} \not= V_{\lambda}$.\par

 \medskip

  When $\lambda=  \lambda_{1}(\hn)$ and $N\geq 3$, a weight such that the above condition is satisfied is known to exist. More precisely,  inequality \eqref{gap_lambda} holds with $\lambda= \lambda_{1}(\hn)$ and
   \begin{equation}\label{V_lambda1}
 V_{ \lambda_{1}(\hn)}(r)=\frac{1}{4}  \frac{1}{r^2} + \frac{(N-1)(N-3)}{4}  \frac{1}{\sinh^2 r}\,.
  \end{equation}
 Furthermore, the operator $P_{ \lambda_{1}(\hn)}- V_{ \lambda_{1}(\hn)}$ is critical in $\hn$. The inequality has been shown first in \cite{AK} and then, with different methods, adaptable to larger classes of manifolds, in \cite{BGG}, where criticality has also been shown. We refer the interested reader to \cite{BG1} and \cite{BAGG} for higher order and $L^p$ version of inequality \eqref{gap_lambda} for $\lambda= \lambda_1,$ respectively, and to \cite{BRM2} for other functional inequalities in the same setting but involving the Green's function of the Laplacian.
 \par\medskip

Hence, a further goal of this work is to complete the study of \eqref{gap_lambda} for $\lambda<  \lambda_{1}(\hn)$ and to address the criticality issue when $N=2$, a case which was not dealt with in \cite{BGG}. Clearly, from the validity of \eqref{gap_lambda} with $\lambda= \lambda_{1}(\hn)$ and  $V_{\lambda}=V_{ \lambda_{1}(\hn)}$ as given above, it is readily deduced that for any $ \lambda <  \lambda_{1}(\hn)$ an optimal radial weight for $P_{\lambda}$ is $\overline V_{\lambda}(r)=(\lambda_{1}(\hn)-\lambda)+V_{ \lambda_{1}(\hn)}(r)$. In Theorem \ref{improved-hardy} below we provide a second optimal radial weight $V_{\lambda}$ which coincides with $V_{ \lambda_{1}(\hn)}$ if $\lambda=  \lambda_{1}(\hn)$, while it gives inequality \eqref{initial} with the weight in \eqref{hardy_weight} if $\lambda=N-2$. Moreover, for $N\geq 3$ and any $ \lambda \leq  \lambda_{1}(\hn)$, $V_{\lambda}$ satisfies
   $$
  V_{\lambda}(r) \sim \left(\frac{N-2}{2} \right)^2  \frac{1}{r^2} \quad \text{as } r \rightarrow 0^{+}\,.
  $$
 The same asymptotic holds for $\overline V_{\lambda}$, hence both $V_{\lambda}$ and $\overline V_{\lambda}$ tend to reproduce the classical Hardy weight near the origin but it can be shown that $V_{\lambda}$ is larger than $\overline V_{\lambda}$, see Remark \ref{asym} below. Clearly, when $N=2$ one cannot expect an improvement with a Hardy term like in higher dimensions. Indeed, near the origin we have
 $$V_{ \lambda}(r)\sim \frac{(1+\sqrt{1-4\lambda})(1+3\sqrt{1-4\lambda})}{12}   \quad \text{as } r \rightarrow 0^{+}\,$$
 for any $\lambda\leq  \lambda_{1}(\hnn)=\frac{1}{4}$.

 \medskip

 \noindent $\bullet$ \bf A new critical quadratic form inequality on the hyperbolic space\rm. We shall show the validity of a new quadratic form inequality on $\hn$, which is \it locally \rm of Hardy type. The inequality reads
\begin{equation}\label{new}
 \begin{aligned}
\int_{\hn} |\nabla_{\hn} u|^2 \, {\rm d}v_{\hn} & \geq \left(\frac{N-2}{2} \right)^2 \int_{\hn} \frac{u^2}{\sinh^2 r} \, {\rm d}v_{\hn} +  \frac{1}{4}
\int_{\hn} \frac{u^2}{\sinh^2 r (\log (\tanh \frac{r}{2}))^2} \, {\rm d}v_{\hn}  \\
& + \frac{N(N-2)}{4} \int_{\hn}  \, u^2 \, {\rm d}v_{\hn}\,.
\end{aligned}
\end{equation}

It will also be shown that the operator

\begin{equation*}
-\Delta_{\hn} - \left(\frac{N-2}{2} \right)^2 \,\frac{1}{\sinh^2 r} \, - \, \frac{1}{4\sinh^2 r (\log (\tanh \frac{r}{2}))^2}   - \frac{N(N-2)}{4}
\end{equation*}
is critical in $\hn \setminus \{ x_0 \}$ and  the constant $\frac{N(N-2)}{4}$ is
sharp in the obvious sense. For a somewhat related inequality on the geodesic ball and for \it radial \rm functions,  see \cite[Prop. 1.8]{CS}, optimality issues not being discussed there.

 \medskip

 \noindent $\bullet$ \bf General Cartan-Hadamard manifolds\rm. It is important to comment that \it all \rm the above results in fact hold under the curvature \it bound \rm $K_R\le-1$, $K_R$ being the sectional curvature in the radial direction of a Cartan-Hadamard manifold with a pole (or, with some modifications, if $K_R\le-c<0$), see Theorem \ref{general manifold2} and its Corollaries. We have so far stated them in the special case of ${\mathbb H}^n$ for greater readability only. In fact, Theorem \ref{general manifold2} proves suitable integral inequalities even under more general curvature bounds that can depend on the point. Inequality \eqref{new} can be extended to general Cartan-Hadamard manifolds as well, in fact a new critical inequality is proved in Theorem \ref{improved-hardy_general}. It is important to stress that such inequality will be shown under the assumption that curvature is \it strictly negative \rm at infinity, more precisely it can be allowed to vanish as the distance from a given pole tends to infinity but not faster than quadratically.

  \medskip

\noindent $\bullet$ \bf Existence of extremals for optimal inequalities\rm. Coming back to inequality \eqref{gap_lambda} with $N\geq 3$, we also take the different attitude of fixing $V_{\lambda}(r)=\frac{I(\lambda)}{r^2} $ and looking for the best constant $I=I(\lambda)>0$ such that \eqref{gap_lambda} holds. In other words, the following infimum problem arises

\begin{equation}\label{quotient}
I(\lambda) : = \inf_{u \in C_{c}^{\infty} (\hn) \setminus \{ 0 \}} \dfrac{\int_{\hn} |\nabla u|^2 \ \dvh - \lambda \int_{\hn} u^2 \ \dvh}{\int_{\hn} \frac{u^2}{r^2} \ \dvh}\,.
\end{equation}

Clearly, $I(0)=\left(\frac{N-2}{2} \right)^2$ while, by \eqref{V_lambda1}, $I(\lambda_1(\hn))=\frac{1}{4}.$
  \medskip

In Theorems \ref{maintheorem1} and \ref{maintheorem2} we investigate existence/non existence of extremals of $I(\lambda)$ for any $\lambda\in [0,\lambda_1(\hn)]$. Furthermore, we provide a lower and an upper bound of the maximum value of $\lambda$ such that $I(\lambda)=\left(\frac{N-2}{2} \right)^2$, namely of the best constant in front of the $L^2$-type remainder term for \eqref{hardy}.

  \medskip
\noindent $\bullet$ \bf Further results\rm. The rest of the paper is, on one hand, devoted to present a further remarkable application of \eqref{gap_lambda}, namely the derivation of suitable quantitative versions of \emph{Heisenberg-Pauli-Weyl uncertainty principle} for the \it shifted \rm Laplacian in the hyperbolic setting; the corresponding inequalities should be compared with those obtained in \cite{Kombe1, Kombe2, AKR}. Besides, we also generalize the Hardy-type inequalities to more general ones in which the energy term may involve weights as well, and also prove improved, weighted Rellich inequalities in the spirit of \cite{BGG}, with optimal Rellich term.

\medskip

\noindent $\bullet$ \bf Plan of the paper\rm. The paper is organized as follows. In Section~\ref{main} we state Theorem~\ref{improved-hardy} in $\hn$, namely our family of optimal inequalities \eqref{gap_lambda}, and some interesting inequalities derived  from Theorem~\ref{improved-hardy}, among which the inequality \eqref{initial} associated to the weight \eqref{hardy_weight}. Finally we state Theorems \ref{maintheorem1} and \ref{maintheorem2} related to the study of existence/non existence of extremals for \eqref{quotient}. Section \ref{HPW} is devoted to the  application of Theorem~\ref{improved-hardy} to obtain the above mentioned quantitative versions of Heisenberg-Pauli-Weyl uncertainty principle involving the shifted Laplacian in the hyperbolic space setting.  In Section~\ref{manifolds}, we discuss the extension of our results to general Cartan-Hadamard manifolds. Sections~\ref{proof_thm}, ~\ref{proof_thm_1} and ~\ref{proof_general_manifolds} are devoted to the proofs of the statements of Sections~\ref{main} and \ref{manifolds}. Finally, in the Appendix we state some Hardy-Maz'ya type inequalities in dimension 2 related with the inequalities of Section~\ref{main}.

%%%%%%%%%%%%%%%%%%%%%%%%%%%%%%%%%%%%%%%%%%%%%%%%%%%%%%%%%%%%%%%%%%%%%%%%%%%%%%%%%%%%%%%%%
%%%%%%%%%%%%%%%%%%%%%%%%%%%%%%%%%%%%%%%%%%%%%%%%%%%%%%%%%%%%%%%%%%%%%%%%%%%%%%%%%%%%%%%%%

\section{Main results}\label{main}

We start by providing a suitable family of optimal Hardy weights for the operators $P_{\lambda}:=-\Delta_{\hn} -\lambda$. We comment here and once for all that, although stated for functions compactly supported away from the pole, most inequalities also holds without such requirement by density arguments: in fact, e.g. in the next Theorem formula \eqref{improved-poinc-lambda} holds without such requirement if $N\ge3$.

\begin{thm}\label{improved-hardy}
Let $N \geq 2.$ For all $ \lambda \leq  \lambda_{1}(\hn)=\left(\frac{N-1}{2} \right)^2$ and all $u \in C_{c}^{\infty} (\hn \setminus \{ x_0 \})$ there holds
\begin{equation}\label{improved-poinc-lambda}
\begin{aligned}
 &\int_{\hn}|\nabla_{\hn} u|^2\, \emph{d}v_{\hn} -\lambda \int_{\hn}  u^2\, \emph{d}v_{\hn}\\
 & \geq \frac{(\gamma_{N}(\lambda)+1)^2}{4} \int_{\hn} \frac{u^2}{r^2} \ \emph{d}v_{\hn} +  \frac{\gamma_{N}(\lambda)(\gamma_{N}(\lambda)+1)}{2} \int_{\hn} g(r)\, u^2  \ \emph{d}v_{\hn} \\
& +  \frac{(N-1+\gamma_{N}(\lambda))(N-3-\gamma_{N}(\lambda))}{4} \int_{\hn}  \frac{u^2}{\sinh^2 r} \ \emph{d}v_{\hn}\,,
\end{aligned}
\end{equation}
where $\gamma_{N}(\lambda):=\sqrt{(N-1)^2-4\lambda}$ and $g$ is defined by
\begin{equation}\label{gdef}g(r) = \frac{r \coth r - 1}{r^2} >0.\end{equation} The function $g$ is strictly decreasing and satisfies
$$g(r) \sim \frac{1}{3} \ \mbox{ as } \  r \rightarrow 0^+ \quad \mbox{ and }  \quad g(r) \sim \frac{1}{r} \  \mbox{ as }  \  r \rightarrow +\infty\,.$$ Besides, the operator $ -\Delta_{\hn} -\lambda -V_{\lambda}(r)$ with the positive potential $V_\lambda$ being given by
\begin{equation}\begin{aligned}\label{potential}
V_{\lambda}(r)&:= \frac{(\gamma_{N}(\lambda)+1)^2}{4}  \frac{1}{r^2}+ \frac{\gamma_{N}(\lambda)(\gamma_{N}(\lambda)+1)}{2} g(r)\\ &+ \frac{(N-1+\gamma_{N}(\lambda))(N-3-\gamma_{N}(\lambda))}{4}  \frac{1}{\sinh^2 r}
\end{aligned}
\end{equation}
 is critical in $\hn \setminus \{ x_0 \}$ in the sense that the inequality

$$
\int_{\hn} |\nabla_{\hn} u|^2 \ \emph{d}v_{\hn}-\lambda \int_{\hn}  u^2\, \emph{d}v_{\hn} \geq \int_{\hn} V u^2 \ \emph{d}v_{\hn} \quad\forall u \in C_{c}^{\infty} (\hn \setminus \{ x_0 \})
$$
is not valid for all $u \in C_{c}^{\infty} (\hn \setminus \{ x_0 \})$ given any $V \gneqq V_{\lambda} $.

\end{thm}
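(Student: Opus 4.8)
The natural strategy is the supersolution (ground--state transform) method, and the first step is to exhibit an explicit positive solution of the associated equation. I would take the radial function
\[
\varphi_\lambda(r):=r^{\frac{\gamma_{N}(\lambda)+1}{2}}\,(\sinh r)^{-\frac{N-1+\gamma_{N}(\lambda)}{2}},\qquad r>0,
\]
which is positive on $\hn\setminus\{x_0\}$, and check that $-\Delta_{\hn}\varphi_\lambda=(\lambda+V_\lambda)\varphi_\lambda$ there, with $V_\lambda$ as in \eqref{potential}. Since $\Delta_{\hn}f(r)=f''(r)+(N-1)\coth r\,f'(r)$ for a radial $f$, the quotient $\Delta_{\hn}\varphi_\lambda/\varphi_\lambda$ is an explicit combination of $r^{-2}$, $(\coth r)/r$, $\coth^2 r$ and $(\sinh r)^{-2}$; using $\coth^2 r=1+(\sinh r)^{-2}$ and the identity $g(r)=(\coth r)/r-r^{-2}$ (cf.\ \eqref{gdef}), one rewrites $-(\lambda+V_\lambda)$ in the same four ``basis'' functions, and matching the coefficients of $r^{-2}$, of $(\coth r)/r$, of $(\sinh r)^{-2}$ and the constant term forces exactly the two exponents above. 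This is also the way $\varphi_\lambda$ is found, and it explains the algebraic shape of $V_\lambda$ and of $\gamma_{N}(\lambda)$. The stated monotonicity and asymptotics of $g$, as well as the positivity of $V_\lambda$, are elementary consequences of the same computations.

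Granted $\varphi_\lambda>0$ and the supersolution identity, inequality \eqref{improved-poinc-lambda} is obtained by the ground--state transform. Writing $u=\varphi_\lambda v$ with $v\in C_{c}^{\infty}(\hn\setminus\{x_0\})$, expanding $|\nabla_{\hn}u|^2=v^2|\nabla_{\hn}\varphi_\lambda|^2+2v\,\varphi_\lambda\,\nabla_{\hn}\varphi_\lambda\!\cdot\!\nabla_{\hn}v+\varphi_\lambda^2|\nabla_{\hn}v|^2$, and integrating by parts (licit because $v$ has compact support in the open set where $\varphi_\lambda$ is smooth and positive), the first two terms combine, via $-\Delta_{\hn}\varphi_\lambda=(\lambda+V_\lambda)\varphi_\lambda$, into $\lambda\int u^2\,\dvh+\int V_\lambda u^2\,\dvh$, so that
\[
\int_{\hn}|\nabla_{\hn}u|^2\,\dvh-\lambda\int_{\hn}u^2\,\dvh-\int_{\hn}V_\lambda\,u^2\,\dvh=\int_{\hn}\varphi_\lambda^2\,|\nabla_{\hn}v|^2\,\dvh\ \ge\ 0.
\]
As every $u\in C_{c}^{\infty}(\hn\setminus\{x_0\})$ equals $\varphi_\lambda\cdot(u/\varphi_\lambda)$ with $u/\varphi_\lambda\in C_{c}^{\infty}(\hn\setminus\{x_0\})$, this yields \eqref{improved-poinc-lambda}, equivalently the nonnegativity of $P:=-\Delta_{\hn}-\lambda-V_\lambda$ on $\hn\setminus\{x_0\}$.

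For the optimality (criticality) statement I would invoke the standard characterization (see \cite{pinch}): a nonnegative Schr\"odinger--type operator on a domain is critical if and only if it admits a null sequence, and if $P$ is critical then $P-W$ fails to be nonnegative for every $0\lneqq W$ --- which is precisely the asserted non--validity of the inequality with any $V\gneqq V_\lambda$ (take $W=V-V_\lambda$). Thus it suffices to construct $(v_k)\subset C_{c}^{\infty}(\hn\setminus\{x_0\})$ with $v_k\to1$ locally uniformly and $\int_{\hn}\varphi_\lambda^2|\nabla_{\hn}v_k|^2\,\dvh\to0$. Taking $v_k$ radial, this reduces to showing that the one--dimensional weighted Dirichlet integral $\int_0^\infty w(r)\,v_k'(r)^2\,dr$ can be made arbitrarily small with $v_k\to1$, where $w(r):=\varphi_\lambda(r)^2(\sinh r)^{N-1}=r^{\gamma_{N}(\lambda)+1}(\sinh r)^{-\gamma_{N}(\lambda)}$, and this in turn follows from the parabolicity conditions $\int_0^1 w^{-1}=\int_1^{\infty}w^{-1}=+\infty$. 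These hold for every $\lambda\le\lambda_1(\hn)$: indeed $w(r)\sim r$ as $r\to0^+$, while as $r\to+\infty$ one has $w(r)\sim 2^{\gamma_{N}(\lambda)}r^{\gamma_{N}(\lambda)+1}e^{-\gamma_{N}(\lambda)r}$ when $\gamma_{N}(\lambda)>0$ and $w(r)=r$ when $\gamma_{N}(\lambda)=0$, so $1/w$ is non--integrable at both endpoints; the $v_k$ are then produced by the usual logarithmic cut--offs near $x_0$ and near infinity (followed by a routine mollification). The two points demanding care are the bookkeeping in the supersolution identity $-\Delta_{\hn}\varphi_\lambda=(\lambda+V_\lambda)\varphi_\lambda$ and --- more substantially --- carrying out the null--sequence argument rigorously and uniformly over all $\lambda\le\lambda_1(\hn)$, including the degenerate case $\gamma_{N}(\lambda)=0$ and the dimension $N=2$, while making sure that the cut--offs simultaneously absorb the $r^{-2}$ singularity of $V_\lambda$ at the pole and control the behaviour at infinity.
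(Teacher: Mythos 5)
Your proposal is correct. For the inequality itself you use exactly the paper's supersolution: your $\varphi_\lambda$ coincides with the paper's $\Psi_{\lambda}(r)=r^{-\frac{N-2}{2}}\left(\frac{\sinh r}{r}\right)^{-\frac{N-1+\gamma_N(\lambda)}{2}}$, and the identity $-\Delta_{\hn}\varphi_\lambda=(\lambda+V_\lambda)\varphi_\lambda$ is precisely the content of the paper's Lemmas 6.1--6.2 (an explicit computation for $r^{\alpha}(\sinh r)^{\beta}$ with $\alpha=-(\beta+\frac{N-2}{2})$); your ground-state substitution $u=\varphi_\lambda v$ is just the hands-on version of the Allegretto--Piepenbrink step the paper cites. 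Where you genuinely diverge is the criticality part. The paper deduces criticality of $-\Delta_{\hn}-\lambda-V_\lambda$ from the Liouville comparison theorem of Pinchover, comparing with the operator at $\lambda=\lambda_1(\hn)$, whose criticality is imported from \cite{BGG} for $N\ge3$ and established separately for $N=2$ by exhibiting the second solution $\left(\frac{r}{\sinh r}\right)^{1/2}\log r$ and invoking minimal growth; the key input is the pointwise bound $(\Psi_\lambda/\Psi_{\lambda_1})^2=(r/\sinh r)^{\gamma_N(\lambda)}\le1$. You instead prove criticality directly via a null sequence for the $h$-transformed form $\int\varphi_\lambda^2|\nabla v|^2$, reducing to the non-integrability of $w^{-1}$ with $w(r)=r^{\gamma_N+1}(\sinh r)^{-\gamma_N}$ at both $r=0$ and $r=\infty$, which you verify correctly (including the degenerate case $\gamma_N=0$). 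Your route is self-contained and uniform in $N\ge2$, avoiding both the external criticality input and the separate two-dimensional argument, at the cost of having to actually carry out the logarithmic cut-off construction you only sketch; the paper's route outsources that work to the already-established critical case. Both the null-sequence characterization of criticality and its consequence that $P-W\not\ge0$ for every $W\gneqq0$ are standard (Pinchover--Tintarev), so no essential gap remains, though a referee would ask you to write out the cut-offs and to justify the positivity of $V_\lambda$ when $\lambda<N-2$ (where the coefficient of $\sinh^{-2}r$ is negative and one needs $r^{-2}>\sinh^{-2}r$, as the paper notes in its Remark 2.3).
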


%%%%%%%%%%%%%%%%%%%%%%%%%%%%%%%%%%%%%%%%%%%%%%%%%%%%%%%%%%%%%%%%%%%%%%%%%%%%%%%%%%%%%%%%%%%

\begin{rem}[Asymptotics of $V_\lambda(r)$]\label{asym}
We investigate here the behavior of $V_{\lambda}$ at zero and at infinity. For any $ \lambda \leq  \lambda_{1}(\hn)$, there holds
$$
V_{\lambda}(r) = \left(\frac{N-2}{2} \right)^2  \frac{1}{r^2} +R_N(\lambda)+o(r)\quad \text{as } r \rightarrow 0^+ \,,
  $$
where $R_N(\lambda):=(\lambda_{1}(\hn)-\lambda)+\frac{2}{3} \sqrt{\lambda_{1}(\hn)-\lambda}-\frac{(N-1)(N-3)}{12}$ and the map $(-\infty, \lambda_{1}(\hn)]\ni \lambda \mapsto R_N(\lambda)$ is decreasing. Hence, among the weights $V_{\lambda}$,  $V_{\lambda_{1}(\hn)}$, is the \lq \lq smallest" near the origin. On the other hand, if we consider the weights $\overline V_{\lambda}(r)=(\lambda_{1}(\hn)-\lambda)+V_{ \lambda_{1}(\hn)}(r)$ as defined in the Introduction, we have that
$$
\overline V_{\lambda}(r) = \left(\frac{N-2}{2} \right)^2  \frac{1}{r^2} +\overline R_N(\lambda)+o(r)\quad \text{as } r \rightarrow 0^+ \,,
  $$
  where $\overline R_N(\lambda):=(\lambda_{1}(\hn)-\lambda)-\frac{(N-1)(N-3)}{12}$. Since $\overline R_N(\lambda)< R_N(\lambda)$ for any $\lambda<\lambda_{1}(\hn)$,  we conclude that $V_{\lambda}$ is  larger than $\overline V_{\lambda}$ near the origin.

We also note that when $N=2$ the first term in the above expansion of $V_{\lambda}$ vanishes, furthermore $ \lambda_{1}(\hnn)=\frac{1}{4}$ and we have
   $$
V_{\lambda}(r) =\frac{(1+\sqrt{1-4\lambda})(1+3\sqrt{1-4\lambda})}{12} +o(r)\quad \text{as } r \rightarrow 0^+ \,.
  $$
Let us turn to the asymptotic behavior at infinity. For any $N\geq 2$, there holds
$$
V_{\lambda}(r) \sim \frac{\gamma_{N}(\lambda)(\gamma_{N}(\lambda)+1)}{2 r}   \quad \text{if }\lambda< \lambda_{1}(\hn)\quad  \text{and}\quad V_{ \lambda_{1}(\hn)}(r) \sim   \frac{1}{ 4 r^2}\quad   \text{as } r \rightarrow +\infty \, ,
  $$
  while
 $$
\overline V_{\lambda}(r) \sim(\lambda_{1}(\hn)-\lambda)\quad   \text{as } r \rightarrow +\infty \,.
  $$
Hence, for $\lambda<\lambda_{1}(\hn)$, $\overline V_{\lambda}$ is larger than $ V_{\lambda}$ near infinity.
  \end{rem}

  The above difference in the behavior at infinity of $V_{\lambda}(r)$ between $\lambda =\lambda_1$ and $\lambda<\lambda_1$   might be related to a well known phenomenon for the Euclidean Laplacian, where $\lambda_1(\mathbb{R}^N) = 0$. The Hardy weight $\frac{1}{|x|^2}$ is at the borderline of short/long range potentials at infinity for $-\Delta$ in $\rn$. In particular,  the potential $(1+|x|)^{-\alpha}$ is a small perturbation of  the $-\Delta$ in $\rn$ for $N\geq 3$  if and only if $\alpha>2$ (see for example  the discussion in \cite[Example~1.1]{pinch}). On the other hand, for $\lambda<0$  the potential $(1+|x|)^{-\alpha}$ is a small perturbation of  $-\Delta -\lambda$ in $\rn$  if and only if $\alpha>1$. We do not claim that $\frac{1}{r}$ for $\lambda < \lambda_1(\hn)$ is a border line potential in the hyperbolic setting, however it would be interesting to further investigate the (sharp) borderline behavior of the potential at infinity in $\hn$ for  $\lambda < \lambda_1(\hn)$.

 %%%%%%%%%%%%%%%%%%%%%%%%%%%%%%%%%%%%%%%%%%%%%%%%%%%%%%%%%%%%%%%%%%%%%%%%%%%%%%%%%%%%%%%%%%%

  \medskip

  In the following we highlight some remarkable inequalities derived from Theorem \ref{improved-hardy} by making specific choices of the parameters involved. The basic idea behind our choices is either to maximize the constant in front of the $L^2$-term, namely to maximize the gain at infinity, or to maximize the constant in front of the classical Hardy weight $\frac{1}{r^2},$ namely to maximize the gain at  the origin.

  \medskip

  The maximum value of the constant in front of the $L^2$-term is clearly achieved for $\lambda=\lambda_{1}(\hn)$. Since $\gamma_{N}(\lambda_{1}(\hn))=0$, for this choice of $\lambda$ the constant in front of the function $g(r)$ in \eqref{potential} vanishes so that $V_{\lambda}$ coincides with the potential in \eqref{V_lambda1} which was  introduced in \cite{AK, BGG}. Therefore, \eqref{improved-poinc-lambda} includes the \emph{sharp} Poincar\'e inequality of \cite[Theorem 2.1]{BGG}.

  \medskip

  Next we consider the constant $\frac{(\gamma_{N}(\lambda)+1)^2}{4}$ in front of the weight $\frac{1}{r^2}$ in \eqref{potential}. For $N\geq 3$ its value cannot exceed the Hardy constant and its maximum is achieved for $\gamma_{N}(N-2)=N-3$, namely for $\lambda=N-2$. For this choice of $\lambda$ the coefficient in front of the term involving $ \frac{1}{\sinh^2 r}$ vanishes and Theorem \ref{improved-hardy} yields the following \emph{sharp} Hardy inequality on $\hn.$

  %%%%%%%%%%%%%%%%%%%%%%%%%%%%%%%%%%%%%%%%%%%%%%%%%%%%%%%%%%%%%%%%%%%%%%%%%%%%%%%%%%%%%%%%%%

  \begin{cor}\label{improved-hardy_1}
Let $N \geq 3.$ For all $u \in C_{c}^{\infty} (\hn \setminus \{ x_0 \})$ there holds
\begin{align}\label{improved-hardy-eq}
\int_{\hn} |\nabla_{\hn} u|^2 \ \emph{d}v_{\hn} & \geq \left(\frac{N-2}{2} \right)^2 \int_{\hn} \frac{u^2}{r^2} \ \emph{d}v_{\hn} +  (N-2) \int_{\hn} u^2 \ \emph{d}v_{\hn} \notag \\
& + \frac{(N-2)(N-3)}{2} \int_{\hn} g(r) \, u^2 \ \emph{d}v_{\hn}\,,
\end{align}
where $g(r)$ is as given in \eqref{gdef}. Besides, the operator
$$
-\Delta_{\hn} - \left(\frac{N-2}{2} \right)^2 \,\frac{1}{r^2}  - (N-2) - \frac{(N-2)(N-3)}{2}\, g(r)
$$
is critical in $\hn \setminus \{ x_0 \}$ in the sense that the inequality

$$
\int_{\hn} |\nabla_{\hn} u|^2 \ \emph{d}v_{\hn} \geq \int_{\hn} V u^2 \ \emph{d}v_{\hn} \quad \forall u \in C_{c}^{\infty} (\hn \setminus \{ x_0 \})
$$
is not valid for all $u \in C_{c}^{\infty} (\hn \setminus \{ x_0 \})$ given any $V \gneqq\left(\frac{N-2}{2} \right)^2 \,\frac{1}{r^2} + (N-2) + \frac{(N-2)(N-3)}{2}\, g(r)$.

Moreover, the constant $\left(\frac{N-2}{2} \right)^2$ is sharp by construction, while the constants $(N-2)$ and $\frac{(N-2)(N-3)}{2}$ are \lq \lq jointly" sharp in the sense that that no inequality of the form

\[\begin{aligned}
\int_{\hn}  |\nabla_{\hn} u|^2 \ \emph{d}v_{\hn} &\geq \left(\frac{N-2}{2} \right)^2 \, \int_{\hn} \frac{u^2}{r^2} \ \emph{d}v_{\hn}\\ &+ \frac{(N-2)(N-3)}{2} \int_{\hn} g(r) \,u^2 \ \emph{d}v_{\hn}  +  c \int_{\hn} u^2 \ \emph{d}v_{\hn}
\end{aligned}\]
holds for all $u \in C_{c}^{\infty} (\hn\setminus \{ x_0 \})$ when $c > N-2$. Similarly, no inequality of  the form

$$
\int_{\hn} |\nabla_{\hn} u|^2 \ \emph{d}v_{\hn} \geq \left(\frac{N-2}{2} \right)^2 \, \int_{\hn} \frac{u^2}{r^2} \ \emph{d}v_{\hn} + (N-2) \int_{\hn} u^2 \ \emph{d}v_{\hn} + c \int_{\hn} g(r)\, u^2 \ \emph{d}v_{\hn}
$$
holds for all $u \in C_{c}^{\infty} (\hn\setminus \{ x_0 \})$ when $c > \frac{(N-2)(N-3)}{4}.$
\end{cor}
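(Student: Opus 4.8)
The plan is to derive the whole statement from Theorem~\ref{improved-hardy} by specializing the parameter to $\lambda=N-2$, which is the unique value in that theorem that produces the sharp Hardy constant $\big(\tfrac{N-2}{2}\big)^2$ in front of $\int_{\hn}r^{-2}u^2$.

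\emph{Step 1: the inequality, the critical weight, and criticality.} First I would set $\lambda=N-2$ in Theorem~\ref{improved-hardy}. Since $N\ge3$, $\gamma_N(N-2)=\sqrt{(N-1)^2-4(N-2)}=\sqrt{(N-3)^2}=N-3\ge0$, and $N-2\le\big(\tfrac{N-1}{2}\big)^2$ because $(N-1)^2-4(N-2)=(N-3)^2\ge0$, so this $\lambda$ is admissible. Substituting $\gamma_N(\lambda)=N-3$ into the three coefficients of \eqref{improved-poinc-lambda}, the coefficient of $\int_{\hn}r^{-2}u^2$ becomes $\big(\tfrac{N-2}{2}\big)^2$, the coefficient of $\int_{\hn}g(r)u^2$ becomes $\tfrac{(N-2)(N-3)}{2}$, and the coefficient of the $\sinh^{-2}r$ term, $\tfrac{(N-1+\gamma_N(\lambda))(N-3-\gamma_N(\lambda))}{4}$, vanishes; adding $(N-2)\int_{\hn}u^2$ to both sides then gives exactly \eqref{improved-hardy-eq}. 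The same substitution in \eqref{potential} removes its third summand, so $V_{N-2}(r)=\big(\tfrac{N-2}{2}\big)^2 r^{-2}+\tfrac{(N-2)(N-3)}{2}g(r)$, and the operator $-\Delta_{\hn}-\lambda-V_\lambda(r)$ of Theorem~\ref{improved-hardy} is precisely $-\Delta_{\hn}-\big(\tfrac{N-2}{2}\big)^2 r^{-2}-(N-2)-\tfrac{(N-2)(N-3)}{2}g(r)$. Its criticality in $\hn\setminus\{x_0\}$ and the non-validity of the quadratic form inequality for every $V\gneqq V_{N-2}(r)+(N-2)$ are then literally the criticality part of Theorem~\ref{improved-hardy} read at $\lambda=N-2$.

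\emph{Step 2: sharpness of the constants.} The sharpness of $\big(\tfrac{N-2}{2}\big)^2$ is ``by construction'': since $g>0$ and $\int_{\hn}u^2\ge0$, a version of \eqref{improved-hardy-eq} with a larger constant in front of $\int_{\hn}r^{-2}u^2$ would, after dropping the other two nonnegative terms, contradict the optimality of $\big(\tfrac{N-2}{2}\big)^2$ in \eqref{hardy} (equivalently, one tests with functions concentrating at the pole $x_0$, along which the other two terms stay bounded because $g$ is bounded near $r=0$). For the joint sharpness I would note that each competing inequality merely \emph{enlarges} the critical weight $\widetilde V(r):=\big(\tfrac{N-2}{2}\big)^2 r^{-2}+(N-2)+\tfrac{(N-2)(N-3)}{2}g(r)$: replacing $(N-2)$ by some $c>N-2$ (with the Hardy and $g$ constants fixed) yields the weight $\widetilde V(r)+(c-(N-2))\gneqq\widetilde V(r)$, and increasing the constant in front of $\int_{\hn}g(r)u^2$ beyond $\tfrac{(N-2)(N-3)}{2}$ (with the other two fixed) yields a weight exceeding $\widetilde V(r)$ by a strictly positive multiple of $g(r)$. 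In either case the criticality of Step~1 forbids the inequality; the same device applied to the $r^{-2}$ term re-proves the sharpness of the Hardy constant.

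\emph{Main obstacle.} There is essentially no obstacle beyond Theorem~\ref{improved-hardy} itself: one only needs to carry out the arithmetic of the specialization $\lambda=N-2$ — in particular verifying that the $\sinh^{-2}r$ coefficient vanishes and that the $r^{-2}$ coefficient becomes the Hardy constant — and to use that $g>0$ on all of $(0,\infty)$, so that the perturbed weights are genuinely $\gneqq\widetilde V$ and the criticality statement applies.
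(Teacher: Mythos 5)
Your proposal is correct and follows exactly the paper's route: the corollary is obtained by specializing Theorem~\ref{improved-hardy} to $\lambda=N-2$ (so that $\gamma_N(N-2)=N-3$, the coefficient of the $\sinh^{-2}r$ term vanishes, and the Hardy coefficient becomes $\left(\tfrac{N-2}{2}\right)^2$), with all the sharpness claims read off from the criticality of $-\Delta_{\hn}-\lambda-V_\lambda$ exactly as you do. Note only that the final sharpness threshold printed in the statement, $c>\tfrac{(N-2)(N-3)}{4}$, is evidently a typo for $c>\tfrac{(N-2)(N-3)}{2}$ (otherwise it would contradict inequality \eqref{improved-hardy-eq} itself), and your argument correctly establishes the latter.
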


\begin{rem}\label{yang}
Several contributions are available about Hardy inequality on the hyperbolic space see e.g. \cite{Carron, Dambrosio, Kombe1, Kombe2, Yang}. Yet, its improvements and related criticality issues still present open problems. In \cite{Yang}, for $N \geq 3$, the authors show that
\begin{equation}\label{best}
\int_{\hn} |\nabla_{\hn} u|^2 \ \dvh \geq \frac{(N-2)^2}{4} \int_{\hn} \frac{u^2}{r^2} \ \dvh + C_N \int_{\hn} u^2 \ \dvh \qquad u\in C_{c}^{\infty} (\hn),
\end{equation}
where $C_N \geq \frac{N-1}{4}.$ The explicit value of $C_N$ is not known and there is no information whether we can add more remainder terms in R.H.S of the inequality. See also \cite[Theorem 3.1]{Kombe1} for a Euclidean $L^2$-type improvement of inequality \eqref{hardy}.

\medskip

More recently, a new type of improved Hardy inequality has been proved in \cite[Theorem~4.1]{AKR}. In terms of  the function $g(r)$ defined in Theorem~\ref{improved-hardy}, the inequality in \cite[Theorem~4.1]{AKR} reads:
\begin{equation}\label{eq_AKR}
\int_{\hn} |\nabla_{\hn} u|^2\, {\rm d}v_{\hn} \geq  \left(\frac{N-2}{2}\right)^2 \int_{\hn} \frac{u^2}{r^2} \, {\rm d}v_{\hn} + \frac{(N-1)(N-2)}{2} \int_{\hn} g(r)\, u^2 \, {\rm d}v_{\hn},
\end{equation}
for $N\geq 3$ and all $u \in C_{c}^{\infty}(\hn)$. By noting that $g(r) \leq \frac{1}{3}$ for every $r>0$, it is readily deduced that inequality \eqref{eq_AKR} follows from our inequality \eqref{improved-hardy-eq}. Hence in particular inequality \eqref{improved-hardy-eq} is stronger than that of \eqref{eq_AKR} proved in \cite[Theorem~4.1]{AKR}.

%On the other hand, we should mention that \cite[Theorem~4.1]{AKR} holds on general Cartan-Hadamard manifolds under less restrictive assumptions than those we require in Section \ref{manifolds}.
\par\medskip
For what remarked above, although we do not have a proof of optimality of the constant given in Corollary~\ref{improved-hardy_1} for the $L^2$-remainder term, our result improves the existing ones. See Theorems \ref{maintheorem1} and \ref{maintheorem2} below and the related discussion for further interesting consequences of Corollary~\ref{improved-hardy_1}.
\end{rem}

\medskip

\begin{rem}\label{sign}
Corollary \ref{improved-hardy_1} follows by Theorem \ref{improved-hardy} by taking $\lambda=N-2$ in the potential given in \eqref{potential}, so that the coefficient in front of $\frac{1}{r^2}$ assumes its maximum value, meanwhile the coefficient in front of $g(r)$ is positive, while the last term in the expression of the weight $V_{\lambda}$ vanishes. Besides, when $N-2<\lambda\leq\lambda_{1}(\hn)$, then $\gamma_{N}(N-2)<N-3$ and all the coefficients in the definition \eqref{potential} are nonnegative, and even positive if $\lambda\neq \lambda_{1}(\hn)$. Instead, when $\lambda<N-2$, the coefficient in front of $\frac{1}{r^2}$ in \eqref{potential} still increases but since $\gamma_{N}(N-2)>N-3$ the coefficient in front of $\frac{1}{\sinh^2 r}$ becomes negative. However, since $\frac{1}{r^2}  > \frac{1}{\sinh^2 r} $ for $r>0$, $V_{\lambda}$ is still positive, indeed we have
 $$V_{\lambda}(r) \geq \left(\frac{N-2}{2} \right)^2\,\frac{1}{\sinh^2 r} +\frac{\gamma_{N}(\lambda)(\gamma_{N}(\lambda)+1)}{2} g(r)\,. $$
As already explained in Remark \ref{asym}, these weights become larger and larger near the origin as $\lambda$ decreases.
%As a matter of example, we write explicitly the optimal Hardy weight corresponding to $-\Delta_{\hn}$ and $N\geq 3$:
% $$V_{0}(r) = \frac{N^2}{4} \,\left( \frac{1}{r^2} - \frac{1}{\sinh^2 r}\right)+ \frac{(N-2)^2}{4} \, \frac{1}{\sinh^2 r}+\frac{N(N-1)}{2}\, g(r) \,. $$

\end{rem}

\par
  \bigskip

  \par
  Going on with our analysis of consequences of Theorem \ref{improved-hardy}, we focus on the case $N=2$ that was not studied in \cite{BGG}. Taking $N=2$ in \eqref{potential}, for any $\lambda \leq 1/4$, we get  $$V_{\lambda}(r):= \frac{(\sqrt{1-4\lambda}+1)^2}{4} \left( \frac{1}{r^2}-\frac{1}{\sinh^2 r}\right)+
  \frac{(\sqrt{1-4\lambda})(\sqrt{1-4\lambda}+1)}{2} g(r) \,.$$
In particular, for $\lambda=\lambda_{1}(\hnn)=\frac{1}{4}$, Theorem \ref{improved-hardy} yields the following \emph{sharp} improved Poincar\'e inequality:

\begin{cor}\label{critical2}
For all $u \in C_{c}^{\infty}(\hnn\setminus \{ x_0 \})$ there holds :
\begin{equation}\label{2dimension}
\int_{\hnn} |\nabla_{\hnn} u|^2 \ \emph{d}v_{\hnn} - \frac{1}{4} \int_{\hnn} u^2 \ \emph{d}v_{\hnn} \geq \frac{1}{4}
 \int_{\hnn} \left( \frac{1}{r^2} - \frac{1}{\sinh^2 r}\right) u^2 \ \emph{d}v_{\hnn} \,.
\end{equation}
Moreover, the operator, $ -\Delta_{\hnn} - \frac{1}{4} - \frac{1}{4} \, \left( \frac{1}{r^2} - \frac{1}{\sinh^2 r}\right)$ is critical in $\hnn \setminus \{ x_{0} \}$, i.e. the inequality
\[
\int_{\hnn} |\nabla_{\hnn} u|^2 \ \emph{d}v_{\hnn} - \frac{1}{4} \int_{\hnn} u^2 \ \emph{d}v_{\hnn} \geq \int_{\hnn} W u^2 \ \emph{d}v_{\hnn}\quad \forall\, u \in C_{c}^{\infty}(\hnn \setminus \{ x_{0}\})
\]
is not valid for any $W \gneqq  \frac{1}{4} \, \left( \frac{1}{r^2} - \frac{1}{\sinh^2 r}\right).$  \\
In particular, all the constants in \eqref{2dimension} are sharp. In particular, no inequality of the form
\[
\int_{\hnn} |\nabla_{\hnn} u|^2 \ \emph{d}v_{\hnn} - \frac{1}{4} \int_{\hnn} u^2 \ \emph{d}v_{\hnn}
\geq  C \int_{\hnn} \left( \frac{1}{r^2} - \frac{1}{\sinh^2 r}\right)  u^2 \ \emph{d}v_{\hnn}\quad \forall\, u \in C_{c}^{\infty}(\hnn \setminus \{ x_{0}\})
\]
holds when $C > \frac{1}{4}$.

\end{cor}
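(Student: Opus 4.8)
The plan is to obtain the entire statement as the special case $N=2$, $\lambda=\lambda_1(\hnn)=\tfrac14$ of Theorem~\ref{improved-hardy}, with the only genuinely new (and elementary) input being the positivity of $\tfrac1{r^2}-\tfrac1{\sinh^2 r}$. First I would compute the relevant parameter: since $\gamma_N(\lambda)=\sqrt{(N-1)^2-4\lambda}$, for $N=2$ and $\lambda=\tfrac14$ one gets $\gamma_2(\tfrac14)=\sqrt{1-1}=0$. Substituting $\gamma_2(\tfrac14)=0$ into \eqref{improved-poinc-lambda} kills the middle term, since its coefficient $\tfrac{\gamma_N(\lambda)(\gamma_N(\lambda)+1)}{2}$ vanishes, while $\tfrac{(\gamma_N(\lambda)+1)^2}{4}=\tfrac14$ and $\tfrac{(N-1+\gamma_N(\lambda))(N-3-\gamma_N(\lambda))}{4}=\tfrac{1\cdot(-1)}{4}=-\tfrac14$. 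This is precisely inequality \eqref{2dimension}. The admissible class $C_c^\infty(\hnn\setminus\{x_0\})$ is the same in the Corollary and in the Theorem, so no density extension is needed.

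For the criticality assertion I would simply observe that the potential $V_\lambda$ of \eqref{potential}, evaluated at $N=2$, $\lambda=\tfrac14$, reduces to $V_{1/4}(r)=\tfrac14\big(\tfrac1{r^2}-\tfrac1{\sinh^2 r}\big)$, and Theorem~\ref{improved-hardy} states exactly that the operator $-\Delta_{\hnn}-\tfrac14-V_{1/4}$ is critical in $\hnn\setminus\{x_0\}$, i.e.\ the quadratic-form inequality with right-hand side $\int W u^2$ fails for every $W\gneqq V_{1/4}$. This gives the second displayed claim of the Corollary verbatim.

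It then remains to deduce the final sharpness statement. Set $w(r):=\tfrac1{r^2}-\tfrac1{\sinh^2 r}$. Since $\sinh r>r$ for all $r>0$, we have $\sinh^2 r>r^2$ and hence $w(r)>0$ for every $r>0$. Consequently, if $C>\tfrac14$ then $W:=C\,w\gneqq \tfrac14 w=V_{1/4}$ on $\hnn\setminus\{x_0\}$, so by the criticality just recalled the inequality
\[
\int_{\hnn}|\nabla_{\hnn}u|^2\,{\rm d}v_{\hnn}-\frac14\int_{\hnn}u^2\,{\rm d}v_{\hnn}\geq C\int_{\hnn}w(r)\,u^2\,{\rm d}v_{\hnn}
\]
cannot hold for all $u\in C_c^\infty(\hnn\setminus\{x_0\})$. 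Hence $C=\tfrac14$ is sharp, and in particular every constant occurring in \eqref{2dimension} is sharp.

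There is no substantive obstacle: the Corollary is a direct specialization of Theorem~\ref{improved-hardy} combined with the trivial bound $\sinh r>r$. The only points requiring (routine) care are to propagate $\gamma_2(\tfrac14)=0$ correctly through all three coefficients in \eqref{improved-poinc-lambda}/\eqref{potential}, and to note that the monotonicity/positivity of $w$ is what turns the abstract criticality of Theorem~\ref{improved-hardy} into the statement that no constant $C>\tfrac14$ is admissible.
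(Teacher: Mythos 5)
Your proposal is correct and follows exactly the paper's route: the corollary is obtained by specializing Theorem~\ref{improved-hardy} to $N=2$, $\lambda=\lambda_1(\hnn)=\tfrac14$, where $\gamma_2(\tfrac14)=0$ makes the $g(r)$ term drop out and turns the potential \eqref{potential} into $\tfrac14\bigl(\tfrac1{r^2}-\tfrac1{\sinh^2 r}\bigr)$, with criticality inherited verbatim (the paper proves the $N=2$ case of the criticality inside the proof of Theorem~\ref{improved-hardy}). Your added observation that $\sinh r>r$ makes $C\,w\gneqq\tfrac14 w$ for $C>\tfrac14$ is precisely the (implicit) step the paper uses to convert criticality into sharpness of the constant.
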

%(REMOVE?) Next we state the counterpart of Theorem \ref{improved-hardy} when $N=2$ is obtained for $\lambda=0$ and reads
%
%\begin{thm}\label{critical20}
%For all $u \in C_{c}^{\infty}(\hnn)$ there holds :
%\begin{equation}\label{2dimension}
%\int_{\hnn} |\nabla_{\hnn} u|^2 \ dv_{\hnn}  \geq  \int_{\hnn} \left( \frac{1}{r^2} - \frac{1}{\sinh^2 r}\right) u^2 \ dv_{\hnn} + \int_{\hnn}g(r)\, u^2 \ dv_{\hnn}\quad \forall\, u \in C_{c}^{\infty}(\hnn)\,,
%\end{equation}
%where $g(r)$ is as given in Theorem \ref{improved-hardy}. Besides the operator, $ -\Delta_{\hnn} - \, \left( \frac{1}{r^2} - \frac{1}{\sinh^2 r}\right)-g(r)$ is critical in $\hnn \setminus \{ x_{0} \}$, i.e., \\
%\[
%\int_{\hnn} |\nabla_{\hnn} u|^2 \ dv_{\hnn}  \geq \int_{\hnn} W u^2 \ dv_{\hnn}\quad \forall\, u \in C_{c}^{\infty}(\hnn \setminus \{ x_{0}\})
%\]
%is not valid for any $W >  \left( \frac{1}{r^2} - \frac{1}{\sinh^2 r}\right)+g(r).$
%\end{thm}

%%%%%%%%%%%%%%%%%%%%%%%%%%%%%%%%%%%%%%%%%%%%%%%%%%%%%%%%%%%%%%%%%%%%%%%%%%%%%%%%%%%%%%%%%

In the next results we change our point of view, taking the attitude of fixing the Hardy weight for the
operator $-\Delta_{\hn}-\lambda$ to be $\frac{I}{r^2}$ for some $I>0$ and investigating the properties of the best constant. In other words, for any $0 \leq \lambda \leq \lambda_1(\hn)$, we study the infimum problem \eqref{quotient} which also reads

\begin{equation}\label{PH}
\int_{\hn} |\nabla_{\hn} u|^2 \ \mathrm{d} v_{\hn}-\lambda \int_{\hn}\, u^2 \ \mathrm{d}v_{\hn}  \geq  I(\lambda) \int_{\hn} \frac{u^2}{r^2}\,  \ \mathrm{d}v_{\hnn}
\quad \forall\, u \in C_{c}^{\infty}(\hnn\setminus \{ x_0 \})\,.
\end{equation}
We have already remarked that $I(0)=\left(\frac{N-2}{2} \right)^2$ and $I(\lambda_1(\hn))=\frac{1}{4}$. Since the map $\lambda \mapsto I(\lambda)$ is non increasing and concave, hence continuous, the following number is well-defined
\begin{equation}\label{hat}
  \hat{\lambda}^N  :=\max\left\{\lambda \in[0,\lambda_1(\hn)]\,:\,I(\lambda)=\left(\frac{N-2}{2} \right)^2\right\}\,.
  \end{equation}
Namely, $\hat{\lambda}^N$ is the best constant in front of the $L^2$-term such that \eqref{best} holds.
\medskip

When $N=3$, the Hardy constant and the Poincar\'e constant are both equal to $\frac{1}{4}$. Hence, from
Corollary \ref{improved-hardy_1} (or \cite[Theorem 2.1]{BGG}, see also \cite[Remark 2.2]{BGG}) it follows
that $\hat{\lambda}^3=\lambda_1({\mathbb{H}^{3}})=1$ and the following statement holds:

\begin{thm}\label{maintheorem1}
Let $N = 3$. For any $\lambda \leq 1$, $I(\lambda)= \frac{1}{4}$ and the infimum in \eqref{quotient} is not achieved, i.e. the inequality in \eqref{PH} is strict for $u\neq 0$.
\end{thm}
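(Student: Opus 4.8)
The plan is to deduce the value $I(\lambda)=\tfrac14$ from the two endpoints $\lambda=0$ and $\lambda=\lambda_1(\hn)$ together with monotonicity of $I$, and then to prove the non-attainment by distinguishing the open range $\lambda<1$, where a positive remainder term is available, from the endpoint $\lambda=1$, where it is not.

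First I would fix the value of $I$. For $N=3$ one has $I(0)=\big(\tfrac{N-2}{2}\big)^2=\tfrac14$ by the sharp Hardy inequality \eqref{hardy}. Moreover, for $N=3$ the factor $\tfrac{(N-1)(N-3)}{4}$ in \eqref{V_lambda1} vanishes, so $V_{\lambda_1(\hn)}(r)=\tfrac{1}{4r^2}$; hence \eqref{gap_lambda} with $\lambda=\lambda_1(\hn)=1$ — equivalently Corollary~\ref{improved-hardy_1} with $N=3$, or \cite[Theorem~2.1]{BGG} — reads $\int_{\hn}|\nabla_{\hn}u|^2\,\dvh-\int_{\hn}u^2\,\dvh\ge\tfrac14\int_{\hn}\tfrac{u^2}{r^2}\,\dvh$, i.e. $I(1)\ge\tfrac14$. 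Since $\lambda\mapsto I(\lambda)$ is non-increasing (as recalled before the statement), $I(0)\ge I(\lambda)\ge I(1)$ for $\lambda\in[0,1]$, whence $I(\lambda)=\tfrac14$ throughout $[0,1]$. (To include $\lambda<0$ as well, monotonicity still gives $I(\lambda)\ge\tfrac14$, while $I(\lambda)\le\tfrac14$ follows by transplanting near $x_0$ a minimising sequence for the Euclidean Hardy inequality on $\mathbb{R}^3$: under the rescaling the term $\lambda\!\int u^2$ becomes negligible with respect to the other two, and $\hn$ is asymptotically flat at $x_0$.)

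For the non-attainment when $\lambda<1$, I would use Corollary~\ref{improved-hardy_1} with $N=3$, namely $\int_{\hn}|\nabla_{\hn}u|^2\,\dvh-\int_{\hn}u^2\,\dvh\ge\tfrac14\int_{\hn}\tfrac{u^2}{r^2}\,\dvh$. For every $u\in C_{c}^{\infty}(\hn\setminus\{x_0\})$ with $u\not\equiv0$ this gives
\begin{align*}
\int_{\hn}|\nabla_{\hn}u|^2\,\dvh-\lambda\int_{\hn}u^2\,\dvh
&=\Big(\int_{\hn}|\nabla_{\hn}u|^2\,\dvh-\int_{\hn}u^2\,\dvh\Big)+(1-\lambda)\int_{\hn}u^2\,\dvh\\
&>\frac14\int_{\hn}\frac{u^2}{r^2}\,\dvh ,
\end{align*}
because $1-\lambda>0$ and $\int_{\hn}u^2\,\dvh>0$. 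Hence \eqref{PH} is strict for all $u\neq0$ and the infimum is not attained; the same computation applies to every $u$ in the energy space $H^1(\hn)$.

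The endpoint $\lambda=1$ is the delicate case, since there the gain $(1-\lambda)\int u^2$ disappears. Here I would exploit the criticality of $-\Delta_{\hn}-1-\tfrac{1}{4r^2}$ in $\hn\setminus\{x_0\}$ for $N=3$, established in Corollary~\ref{improved-hardy_1} and in \cite{BGG}: arguing by contradiction, if equality held in \eqref{PH} with $\lambda=1$ for some $u_0\in C_{c}^{\infty}(\hn\setminus\{x_0\})$, $u_0\not\equiv0$, then $u_0$ would minimise the quotient \eqref{quotient} over $C_c^{\infty}$, and the first variation (the Lagrange multiplier being $I(1)=\tfrac14$, as seen by testing the Euler--Lagrange equation against $u_0$) would show that $u_0$ solves $-\Delta_{\hn}u_0-u_0-\tfrac{1}{4r^2}u_0=0$ in $\hn\setminus\{x_0\}$. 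As $u_0$ is compactly supported, it vanishes on a non-empty open subset of the connected manifold $\hn\setminus\{x_0\}$, so the unique continuation principle forces $u_0\equiv0$, a contradiction; equivalently, by criticality the only positive solutions of that equation are the multiples of the ground state, which behaves like $r^{-1/2}$ near $x_0$ (reproducing the Euclidean Hardy ground state), hence has infinite Dirichlet energy at the pole and is inadmissible. This is the only step requiring some care — there being no remainder term to exploit at $\lambda=1$ — and it is also why the statement is peculiar to $N=3$: it is exactly for $N=3$ that $V_{\lambda_1(\hn)}$ collapses to the pure Hardy weight $\tfrac{1}{4r^2}$, forcing $I(\lambda_1(\hn))$ down to the Hardy constant $\big(\tfrac{N-2}{2}\big)^2$, and hence $I$ to be constant on $[0,\lambda_1(\hn)]$.
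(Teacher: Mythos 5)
Your proposal is correct and follows essentially the same route as the paper: the value $I(\lambda)=\tfrac14$ comes from the endpoint inequalities $I(0)=\tfrac14$, $I(1)\ge\tfrac14$ and monotonicity; strictness for $\lambda<1$ comes from the leftover term $(1-\lambda)\int_{\hn}u^2\,\dvh>0$ (the paper phrases this as a contradiction by comparing the quotient at $\lambda_0$ and at a larger $\bar\lambda$, which is the same mechanism); and non-attainment at $\lambda=1$ is settled exactly as in the paper, via criticality of $-\Delta_{\hn}-1-\tfrac{1}{4r^2}$ and the fact that the ground state $\sqrt{r}/\sinh r$ fails to lie in $H^1(\mathbb{H}^3)$. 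The unique-continuation remark is superfluous, and your explicit treatment of $\lambda<0$ by transplanting Euclidean minimising sequences is a small addition the paper leaves implicit, but neither changes the substance of the argument.
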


For higher dimensions the situation is more complicated and we have the following :

\begin{thm}\label{maintheorem2}
Let   $N > 3$ and let $\hat{\lambda}^N$ be as defined in \eqref{hat}. Then,
$$
N-2 \leq \hat{\lambda}^N <  \min\left\{\lambda_1(\hn), N-2+\frac{(N-2)(N-3)}{6}\right\}
$$
and the following three cases occur:
\begin{itemize}
\item[$(i)$] for $0 \leq \lambda \leq \hat{\lambda}^N$, $I(\lambda)=\left(\frac{N-2}{2} \right)^2$ and the infimum in \eqref{quotient} is not achieved, i.e. the inequality in \eqref{PH} is strict for $u\neq 0$;

\medskip

\item[$(ii)$] for $\hat{\lambda}^N< \lambda < \lambda_{1}(\hn)$, $\left(\frac{1+2\sqrt{\lambda_{1}(\hn)
-\lambda}}{2}\right)^2<I(\lambda)<\left(\frac{N-2}{2} \right)^2$ and the infimum in \eqref{quotient} is achieved by a unique (up to a change of sign) positive function $u\in H^1(\hn)$; in particular,  the corresponding operator is critical.

\medskip

\item[$(iii)$] for $\lambda = \lambda_{1}(\hn)$, $I(\lambda_{1}(\hn))=\frac{1}{4}$ and the infimum in \eqref{quotient} is not achieved, i.e. the inequality in \eqref{PH} is strict for $u\neq 0$.
\end{itemize}
\end{thm}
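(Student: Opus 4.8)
The plan is to analyze the infimum problem \eqref{quotient} for $N>3$ by combining the optimal inequalities already established (Theorem~\ref{improved-hardy}, Corollary~\ref{improved-hardy_1}), a concentration-compactness argument, and the criticality/subcriticality dichotomy. First I would establish the bounds on $\hat\lambda^N$. The lower bound $\hat\lambda^N\ge N-2$ is immediate from Corollary~\ref{improved-hardy_1}: inequality \eqref{improved-hardy-eq} shows that for $\lambda=N-2$ one has $\int|\nabla u|^2-\lambda\int u^2\ge (\tfrac{N-2}{2})^2\int u^2/r^2$ (dropping the nonnegative $g$-term), hence $I(N-2)\ge(\tfrac{N-2}{2})^2$, and since $I$ is nonincreasing and $I(0)=(\tfrac{N-2}{2})^2$ it follows that $I(\lambda)=(\tfrac{N-2}{2})^2$ for all $\lambda\le N-2$. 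For the strict upper bound $\hat\lambda^N<\min\{\lambda_1(\hn),\,N-2+\tfrac{(N-2)(N-3)}{6}\}$, I would argue by contradiction: if $I(\lambda)=(\tfrac{N-2}{2})^2$ held for some such $\lambda>N-2$, then $-\Delta_{\hn}-\lambda-(\tfrac{N-2}{2})^2 r^{-2}$ would be nonnegative on $C_c^\infty(\hn\setminus\{x_0\})$, and one would test this against the quasi-extremal of the Hardy term, $u\approx r^{-(N-2)/2}$ suitably truncated near $0$ and $\infty$; the precise local expansion of the would-be ground state, compared with the asymptotics $g(r)\sim 1/3$ as $r\to0^+$ from \eqref{gdef} and Remark~\ref{asym} (the quantity $R_N(\lambda)$, which governs the next-order term in $V_\lambda$), forces a contradiction exactly when $\lambda$ exceeds $N-2+\tfrac{(N-2)(N-3)}{6}$; the bound $\hat\lambda^N<\lambda_1(\hn)$ follows since at $\lambda_1(\hn)$ one has $I=\tfrac14<(\tfrac{N-2}{2})^2$ for $N>3$.

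For part $(i)$, once $\lambda\le\hat\lambda^N$ so that $I(\lambda)=(\tfrac{N-2}{2})^2$, nonattainment follows from criticality: the operator $-\Delta_{\hn}-\lambda-(\tfrac{N-2}{2})^2 r^{-2}$ (equivalently, after subtracting the remaining optimal potential terms) is critical in $\hn\setminus\{x_0\}$ by Corollary~\ref{improved-hardy_1} and the null-criticality structure, so it has a unique (up to scalar) positive supersolution which is not in $H^1$; were the infimum attained by some $u_0$, $u_0$ would be a ground state lying in $H^1$, contradicting null-criticality. Alternatively one can observe directly that any minimizing sequence concentrates at the pole (or escapes to infinity), so no minimizer survives. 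Part $(iii)$ is the case $\lambda=\lambda_1(\hn)$, where $I(\lambda_1(\hn))=\tfrac14$ is realized by the critical potential $V_{\lambda_1(\hn)}$ of \eqref{V_lambda1}; criticality of $P_{\lambda_1(\hn)}-V_{\lambda_1(\hn)}$ (from \cite{BGG}, also recovered in Theorem~\ref{improved-hardy}) again precludes attainment, and the strict inequality in \eqref{PH} for $u\ne0$ is exactly the statement that the quadratic form with the pure Hardy weight $\tfrac14 r^{-2}$ has no nontrivial null vector.

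The heart of the theorem is part $(ii)$, the existence and uniqueness of an extremal for $\hat\lambda^N<\lambda<\lambda_1(\hn)$. Here $I(\lambda)<(\tfrac{N-2}{2})^2$ by definition of $\hat\lambda^N$ and concavity/monotonicity of $I$; combined with $\lambda<\lambda_1(\hn)$ this places the problem in the strictly subcritical regime, where the operator $-\Delta_{\hn}-\lambda-I(\lambda)r^{-2}$ is subcritical and $C_c^\infty(\hn)$ embeds compactly in the relevant weighted space away from the two ``bad'' scales. I would run a concentration-compactness argument on a minimizing sequence $\{u_n\}$ normalized by $\int u_n^2/r^2=1$: the possible losses of compactness are (a) concentration at the pole $x_0$, governed by the local Hardy constant $(\tfrac{N-2}{2})^2$, and (b) vanishing/escape to infinity, governed by the Poincaré constant $\lambda_1(\hn)$. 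The strict inequalities $I(\lambda)<(\tfrac{N-2}{2})^2$ and $\lambda<\lambda_1(\hn)$ are precisely what rule out (a) and (b) respectively — this is the standard ``strict subadditivity'' check — so the minimizing sequence is relatively compact in $H^1(\hn)$ and the infimum is attained by some $u\in H^1(\hn)$, which may be taken positive by the usual replacement $u\mapsto|u|$ and strong maximum principle; criticality of the resulting operator $-\Delta_{\hn}-\lambda-I(\lambda)r^{-2}$ then follows because it admits a positive $H^1$ ground state. Uniqueness (up to sign) I would obtain from a Liouville-type comparison theorem for critical operators (uniqueness of the ground state), as advertised in the abstract. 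The lower bound $I(\lambda)>\big(\tfrac{1+2\sqrt{\lambda_1(\hn)-\lambda}}{2}\big)^2$ I would extract from Theorem~\ref{improved-hardy}: taking the weight $V_\lambda$ of \eqref{potential} and discarding the nonnegative $g$-term and using $\tfrac{1}{\sinh^2 r}<\tfrac1{r^2}$ to absorb the $\sinh^{-2}$ term into the $r^{-2}$ term yields a lower bound for $I(\lambda)$ of the stated form with $\gamma_N(\lambda)=\sqrt{(N-1)^2-4\lambda}=2\sqrt{\lambda_1(\hn)-\lambda}$. The main obstacle I anticipate is the careful verification in part $(ii)$ that the two strict inequalities genuinely exclude both failure modes of compactness — in particular handling the escape-to-infinity alternative on $\hn$, where the volume growth is exponential and the relevant ``limit problem'' is the shifted Laplacian $-\Delta_{\hn}-\lambda$ with no Hardy weight; and, for the upper bound on $\hat\lambda^N$, pinning down the exact threshold $N-2+\tfrac{(N-2)(N-3)}{6}$ via the second-order term $R_N(\lambda)$ in the expansion of $V_\lambda$ near the origin.
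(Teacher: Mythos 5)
Your overall architecture (optimal inequalities for the bounds, concentration--compactness for attainment, subcriticality for non-attainment) matches the paper's, and your treatment of part $(ii)$ is essentially the paper's argument: a minimizing sequence normalized by $\int u_n^2/r^2=1$, with concentration possible only at the pole, ruled out by $I(\lambda)<\left(\frac{N-2}{2}\right)^2$, tightness at infinity coming from the decay of the weight $1/r^2$ together with the norm equivalence furnished by $\lambda<\lambda_1(\hn)$. But there are two genuine gaps. The first is the case $\lambda=\hat{\lambda}^N$ in part $(i)$, which is the delicate heart of the non-attainment statement. You claim non-attainment ``follows from criticality'' of $-\Delta_{\hn}-\lambda-\left(\frac{N-2}{2}\right)^2 r^{-2}$, but for $\lambda<\hat\lambda^N$ that operator is \emph{subcritical} (one can still add the positive $g$-term), and at $\lambda=\hat\lambda^N$ its criticality status is precisely what is \emph{not} known --- the paper lists it explicitly as an open problem. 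So neither the criticality route nor the subcriticality route closes this case. The paper instead adapts Adimurthi's Euclidean device: it constructs explicit subsolutions $\varphi_\delta(r)=r^{-\frac{N-2}{2}}\bigl(\log\frac1r\bigr)^{\delta}$ with $\delta<-\frac12$ on a small ball $B_{R_1}$ (with $R_1$ independent of $\delta$), uses the form positivity encoded in the definition of $\hat\lambda^N$ as a comparison principle to conclude $\hat u\ge M(\delta)\varphi_\delta$ for a putative minimizer $\hat u$, and lets $\delta\to-\frac12$ to contradict $\int \hat u^2/r^2<\infty$. This construction is a missing idea, not a routine verification.

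The second gap is the upper bound $\hat\lambda^N< N-2+\frac{(N-2)(N-3)}{6}$. Your proposed mechanism --- a local expansion of a quasi-extremal near the origin controlled by $R_N(\lambda)$ --- is not where this threshold comes from, and I do not see how it would produce it: the constant-order term near the origin does not obstruct the Hardy constant. The paper's argument is global and short: writing $\hat\lambda^N=(N-2)+(\hat\lambda^N-(N-2))$ and using $g(r)\le\frac13$ to convert the surplus $L^2$-term into $3(\hat\lambda^N-(N-2))\int g(r)u^2$, one lands on an inequality of exactly the form whose $g$-coefficient is declared \emph{jointly sharp} in Corollary~\ref{improved-hardy_1}; the threshold is $3(\hat\lambda^N-(N-2))<\frac{(N-2)(N-3)}{2}$. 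Two smaller points: in part $(iii)$ the correct statement is \emph{sub}criticality of $-\Delta_{\hn}-\lambda_1(\hn)-\frac{1}{4r^2}$ (witnessed by the positive supersolution $\bigl(\frac{r}{\sinh r}\bigr)^{\frac{N-1}{2}}r^{\frac{2-N}{2}}$, which is not a solution), not criticality; and in the lower bound of $(ii)$ the $\sinh^{-2}$-term has a \emph{nonnegative} coefficient once $\lambda>\hat\lambda^N\ge N-2$ (since then $\gamma_N(\lambda)<N-3$), so it is simply discarded rather than ``absorbed'' via $\sinh^{-2}r<r^{-2}$, and the strictness of the resulting bound requires combining attainment of $I(\lambda)$ with the criticality assertion of Theorem~\ref{improved-hardy}.
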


\begin{rem}\label{upper_bound_rem}
%\label{rem1}
Note that for $N\geq 6$ there holds $$\min\left\{\lambda_1(\hn), N-2+\frac{(N-2)(N-3)}{6}\right\}=N-2+\frac{(N-2)(N-3)}{6}\,.$$
\end{rem}

\textbf{ Open problems related to Theorem \ref{maintheorem2}}: \par
\medskip

- Theorem \ref{maintheorem2} does not give the explicit value of $\hat{\lambda}^N$. The strict inequality in the lower bound provided for $I(\lambda)$ in
the statement $(ii)$ of Theorem~\ref{maintheorem2} and the inequality

$$
\left( \frac{1 + 2 \sqrt{\lambda_1(\hn) - \hat\lambda^N}}{2}  \right)^2 \leq \left( \frac{1 + 2 \sqrt{\lambda_1(\hn) - (N-2)}}{2}  \right)^2 = \frac{(N-2)^2}{4}
$$

suggest the conjecture that $\hat{\lambda}^N>N-2$ but we do not have a proof of this fact; \par
\medskip

- By Theorem \ref{maintheorem2} it is readily deduced that the operator
$$
-\Delta_{\hn} - \lambda-  \, \frac{I(\lambda)}{r^2}\,,
$$
is critical for $\hat{\lambda}^N< \lambda <\lambda_{1}(\hn)$ while it is subcritical for $0 \leq \lambda < \hat{\lambda}^N$ and for $ \lambda =\lambda_{1}(\hn)$ (subcriticality for $\lambda = \lambda_1(\hn)$ comes from \cite[Theorem~2.1]{BGG}, namely from the existence of the weight \eqref{V_lambda1}). We do not have a proof of the subcriticality/criticality of the operator when $\lambda = \hat{\lambda}^N$.

%%%%%%%%%%%%%%%%%%%%%%%%%%%%%%%%%%%%%%%%%%%%%%%%%%%%%%%%%%%%%%%%%%%%%%%%%%%%%%%%%%%%%%%%

\subsection{A second  Hardy-type inequality on the hyperbolic space and a further upper bound on $\hat{\lambda}^N$}
 Now we  study a different Hardy-type inequality on the hyperbolic space which resembles the classical Hardy one near the pole $x_0.$ It is quite natural to consider a Hardy weight  related to the defining function of $\hn$ as a model manifold, namely to the quantity $\sinh r$,  that behaves like $ r $ near pole and decays exponentially near infinity. We shall produce an \emph{optimal} Hardy-type inequality, in particular we have the following result.

  \begin{thm}\label{improved-hardy_different}
Let $N \geq 3.$ For all $u \in C_{c}^{\infty} (\hn \setminus \{ x_0 \})$ there holds
\begin{align}\label{improved-hardy-eq2}
\int_{\hn} |\nabla_{\hn} u|^2 \ \emph{d}v_{\hn} & \geq \left(\frac{N-2}{2} \right)^2 \int_{\hn} \frac{u^2}{\sinh^2 r} \ \emph{d}v_{\hn} +  \frac{1}{4}
\int_{\hn} \frac{u^2}{\sinh^2 r (\log (\tanh \frac{r}{2}))^2} \, \emph{d}v_{\hn} \notag \\
& + \frac{N(N-2)}{4} \int_{\hn}  \, u^2 \ \emph{d}v_{\hn}\,.
\end{align}

 Besides, the operator

\begin{equation}\label{critical_hardy_type}
-\Delta_{\hn} - \left(\frac{N-2}{2} \right)^2 \,\frac{1}{\sinh^2 r} \, - \, \frac{1}{4\sinh^2 r (\log (\tanh \frac{r}{2}))^2}   - \frac{N(N-2)}{4}
\end{equation}
is critical in $\hn \setminus \{ x_0 \}$ in the sense described in Theorem~\ref{improved-hardy}. Moreover, the constant $\frac{N(N-2)}{4}$ is
sharp in the sense that no inequality of the form

$$
\int_{\hn} |\nabla_{\hn} u|^2 \ {\rm d}v_{\hn}  \geq \left(\frac{N-2}{2} \right)^2 \int_{\hn} \frac{u^2}{\sinh^2 r} \, {\rm d}v_{\hn} + c \int_{\hn} u^2 \, {\rm d}v_{\hn}
$$
holds for all $u \in C_c^{\infty}(\hn \setminus \{ x_0 \})$ when $c > \frac{N(N-2)}{4}.$

\end{thm}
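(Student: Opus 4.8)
The plan is to prove Theorem~\ref{improved-hardy_different} by the same supersolution/ground-state-transform method that underlies Theorem~\ref{improved-hardy}. First I would recall that for radial computations on $\hn$ one has $\mathrm{d}v_{\hn} = (\sinh r)^{N-1}\,\mathrm{d}r\,\mathrm{d}\omega$ and $-\Delta_{\hn} = -\partial_r^2 - (N-1)\coth r\,\partial_r - \frac{1}{\sinh^2 r}\Delta_{\mathbb{S}^{N-1}}$. Restricting first to radial $u$ and then handling spherical harmonics, the key move is the change of variables $s = -\log\tanh(r/2)\in(0,\infty)$, which satisfies $\frac{\mathrm{d}s}{\mathrm{d}r} = -\frac{1}{\sinh r}$. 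Under this substitution the Laplacian's radial part transforms in such a way that the weight $\frac{1}{\sinh^2 r}$ becomes an effective $\frac{1}{s^2}$-type singularity near the pole (and an exponentially-weighted term near infinity), so the inequality \eqref{improved-hardy-eq2} is mapped to a one-dimensional Hardy inequality on $(0,\infty)$ with weight $\frac14 s^{-2}$ plus the logarithmic correction $\frac14 s^{-2}(\log s)^{-2}$, which is precisely the classical iterated Hardy inequality on a half-line together with an $L^2$ term whose constant will turn out to be $\frac{N(N-2)}{4}$. The positive supersolution witnessing criticality should be, in the original variable, of the form $\varphi(r) = (\sinh r)^{-(N-2)/2}\,\big(-\log\tanh(r/2)\big)^{1/2}$ (times lower-order corrections), i.e. the product of the natural Hardy-type weight with the square-root logarithmic factor that saturates the iterated Hardy inequality.

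Concretely, the steps are: (1) verify by direct computation that the function $\varphi$ above (or its correct version) solves $-\Delta_{\hn}\varphi = \big(\frac{N-2}{2}\big)^2\frac{\varphi}{\sinh^2 r} + \frac{\varphi}{4\sinh^2 r (\log\tanh\frac r2)^2} + \frac{N(N-2)}{4}\varphi$ on $\hn\setminus\{x_0\}$, or at least is a positive supersolution, which by the ground-state transform $u = \varphi w$ immediately yields \eqref{improved-hardy-eq2} with a nonnegative remainder $\int_{\hn}\varphi^2|\nabla_{\hn} w|^2$; (2) prove criticality of the operator \eqref{critical_hardy_type} by showing $\varphi$ is in fact a null-state/ground state — i.e. exhibiting a null sequence $w_n\to 1$ locally with $\int \varphi^2|\nabla w_n|^2\to 0$ — using the standard logarithmic cutoff argument adapted to the half-line behavior of $\varphi$ at $0$ and at $\infty$, together with the criterion from \cite{pinch} that a subcritical operator has no ground state; (3) deduce sharpness of $\frac{N(N-2)}{4}$: if the $L^2$ constant could be raised to $c>\frac{N(N-2)}{4}$, then subtracting the inequality with the reduced (but still valid) logarithmic term would contradict criticality, or more directly one tests the inequality on $\varphi$ times the cutoffs and extracts the exact constant from the asymptotics of $\varphi$ at infinity (where $-\log\tanh(r/2)\sim 2e^{-r}$ and $(\sinh r)^{-(N-2)/2}\sim 2^{(N-2)/2}e^{-(N-2)r/2}$, giving the decay rate that pins down $\frac{N(N-2)}{4}$).

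The main obstacle will be step (1): carrying out the Laplacian computation cleanly enough to see that the coefficients of $\frac{1}{\sinh^2 r}$, of $\frac{1}{\sinh^2 r(\log\tanh\frac r2)^2}$, and of the constant term come out to exactly $\big(\frac{N-2}{2}\big)^2$, $\frac14$, and $\frac{N(N-2)}{4}$ respectively — in particular the cross terms produced when differentiating the product $(\sinh r)^{-(N-2)/2}(-\log\tanh\frac r2)^{1/2}$ against the first-order term $(N-1)\coth r\,\partial_r$ must conspire to produce the $\frac14(\log)^{-2}$ correction with no leftover. A secondary subtlety is the treatment of non-radial $u$: one must check that the angular part only adds the nonnegative term $\frac{1}{\sinh^2 r}|\nabla_{\mathbb{S}^{N-1}}u|^2$, so that the radial supersolution still controls the full operator — this is routine once the eigenvalue $\mu_k = k(k+N-2)\geq 0$ of $-\Delta_{\mathbb{S}^{N-1}}$ is noted to only strengthen the inequality. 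I expect steps (2) and (3) to be largely parallel to the corresponding arguments for Theorem~\ref{improved-hardy} and to go through with the analogous null sequence.
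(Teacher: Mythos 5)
Your steps (1) and (2) are essentially the paper's own proof. The paper produces the two positive solutions $u_0(r)=(\sinh r)^{(2-N)/2}$ and $u_1(r)=-u_0(r)\log\tanh\frac{r}{2}$ of $Hu=0$, where $H=-\Delta_{\hn}-\frac{(N-2)^2}{4\sinh^2 r}-\frac{N(N-2)}{4}$, and then invokes the optimal-Hardy-weight construction of \cite{pinch}: $W=\frac14\bigl|\nabla\log(u_1/u_0)\bigr|^2=\frac{1}{4\sinh^2 r(\log\tanh\frac{r}{2})^2}$, with $H-W$ critical and ground state $\sqrt{u_0u_1}$ --- which is exactly your $\varphi$. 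Your substitution $s=-\log\tanh(r/2)$ is a clean way to organize the same computation: after the ground-state transform $u=u_0v$ the residual quadratic form becomes $c\int_0^\infty|\dot v(s)|^2\,{\rm d}s$ and the remaining weight becomes $\frac{c}{4}\int_0^\infty v^2s^{-2}\,{\rm d}s$, i.e.\ the plain one-dimensional Hardy inequality on the half-line; note that, contrary to what you write, no iterated $\frac14 s^{-2}(\log s)^{-2}$ correction appears --- the terms $\frac{(N-2)^2}{4\sinh^2r}$ and $\frac{N(N-2)}{4}$ are already absorbed by the first transform and only the single $\frac{1}{4s^2}$ term survives. Your hand-made null-sequence argument for criticality replaces the paper's citation of \cite{pinch} and is feasible, since the half-line Hardy operator with constant $\frac14$ is null-critical with generalized ground state $s^{1/2}$.

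The genuine gap is step (3): neither of your two routes establishes sharpness of $\frac{N(N-2)}{4}$. First, an inequality $H\geq\delta$ for a constant $\delta>0$ does not contradict criticality of $H-W$, because a constant is not bounded below by any positive multiple of $W$ near the pole, where $W\sim\frac{1}{4r^2(\log r)^2}$ blows up, so the two perturbations are not comparable. Second, and more seriously, your test-function computation on $\varphi\chi_n$ gives Rayleigh quotients converging to $\frac{N(N-2)}{4}+\lim_n\bigl(\int W\varphi^2\chi_n^2\,\dvh\bigr)/\bigl(\int\varphi^2\chi_n^2\,\dvh\bigr)$; since $\sinh^2r\,(\log\tanh\frac{r}{2})^2\to1$ as $r\to\infty$ one has $W(r)\to\frac14$, not $0$, while the mass of $\varphi^2\,\dvh$ accumulates at $r=\infty$, so this limit equals $\frac14$ and your test functions only show that the best constant is at most $\frac{(N-1)^2}{4}$. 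In fact Theorem~\ref{improved-hardy} with $\lambda=\lambda_1(\hn)$ together with $r^{-2}\geq\sinh^{-2}r$ already yields \eqref{improved-hardy-eq2} with the larger constant $\frac{(N-1)^2}{4}$ in place of $\frac{N(N-2)}{4}$ (and without the logarithmic term), so the sharpness assertion cannot be proved as stated. You should be aware that the paper's own argument for this step rests on the claim that $W(r)\to0$ as $r\to\infty$, which fails for exactly the reason above; the obstruction you would meet lies in the statement itself, not merely in your approach.
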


As an immediate consequence, we get the following result.

\begin{cor}
Let   $N \ge 3$ and let $\hat{\lambda}^N$ be as defined in \eqref{hat}. Then:
$$
\hat{\lambda}^N  \leq \frac{N(N-2)}{4}.
$$
\end{cor}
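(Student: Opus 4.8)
The plan is to deduce the bound $\hat\lambda^N\le \frac{N(N-2)}{4}$ directly from Theorem~\ref{improved-hardy_different} together with the elementary comparison between the weights $\frac{1}{r^2}$ and $\frac{1}{\sinh^2 r}$. Recall that $\hat\lambda^N$ is, by \eqref{hat}, the largest $\lambda\in[0,\lambda_1(\hn)]$ for which the Hardy-type inequality \eqref{PH} holds with the optimal constant $I(\lambda)=\left(\frac{N-2}{2}\right)^2$; equivalently, it is the best constant $c$ for which \eqref{best} is true. So it suffices to show that \eqref{best} fails as soon as $c>\frac{N(N-2)}{4}$, i.e. that
\[
\int_{\hn} |\nabla_{\hn} u|^2 \,\dvh \geq \left(\frac{N-2}{2}\right)^2\int_{\hn}\frac{u^2}{r^2}\,\dvh + c\int_{\hn}u^2\,\dvh
\]
cannot hold for all $u\in C_c^\infty(\hn)$ when $c>\frac{N(N-2)}{4}$.

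The key step is the pointwise inequality $\sinh r\ge r$ for all $r>0$, which gives $\frac{1}{\sinh^2 r}\le \frac{1}{r^2}$. Hence, if \eqref{best} held with some $c>\frac{N(N-2)}{4}$, then a fortiori
\[
\int_{\hn} |\nabla_{\hn} u|^2 \,\dvh \geq \left(\frac{N-2}{2}\right)^2\int_{\hn}\frac{u^2}{\sinh^2 r}\,\dvh + c\int_{\hn}u^2\,\dvh
\]
would hold for all $u\in C_c^\infty(\hn)$, and in particular for all $u\in C_c^\infty(\hn\setminus\{x_0\})$. But this is precisely the inequality whose sharpness is asserted in the last part of Theorem~\ref{improved-hardy_different}: no such inequality is valid when $c>\frac{N(N-2)}{4}$. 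This contradiction forces $\hat\lambda^N\le \frac{N(N-2)}{4}$, since $\hat\lambda^N$ is by definition a value of $c$ for which \eqref{best} holds.

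I do not anticipate a genuine obstacle here: the corollary is a one-line consequence of the already-proved Theorem~\ref{improved-hardy_different}, the only ingredient beyond it being the trivial bound $\sinh r\ge r$. The one point requiring a sentence of care is the passage between $C_c^\infty(\hn)$ and $C_c^\infty(\hn\setminus\{x_0\})$: since the weight $\frac{1}{\sinh^2 r}$ is non-integrable near $x_0$ exactly as $\frac{1}{r^2}$ is, a standard cutoff/density argument (removing a small geodesic ball around $x_0$) shows that validity of the displayed inequality on $C_c^\infty(\hn\setminus\{x_0\})$ is equivalent to its validity on $C_c^\infty(\hn)$, so no constant is lost. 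With that remark in place the proof is complete.
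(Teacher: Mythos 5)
Your proof is correct and is exactly the argument the paper intends (the paper states the corollary as an ``immediate consequence'' of Theorem~\ref{improved-hardy_different} without writing it out): the definition of $\hat\lambda^N$ gives \eqref{best} with $c=\hat\lambda^N$, the bound $\sinh r\ge r$ upgrades the weight $1/r^2$ to $1/\sinh^2 r$, and the sharpness clause of Theorem~\ref{improved-hardy_different} forces $\hat\lambda^N\le \frac{N(N-2)}{4}$. Your closing density remark is actually unnecessary: since $C_c^\infty(\hn\setminus\{x_0\})\subseteq C_c^\infty(\hn)$, validity of the inequality on the larger class trivially implies validity on the smaller one, which is all that is needed to invoke the sharpness statement.
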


\begin{rem}
 One sees that $\frac{N(N-2)}{4} <  N-2+ \frac{(N-2)(N+3)}{6} $ if and only if $N < 6,$ whereas of course $\frac{N(N-2)}{4}<\lambda_1(\hn)$ for all $N$.  Hence, the above corollary provides
a better  upper bound  for $\hat{\lambda}^N$ than in Remark~\ref{upper_bound_rem} for dimension $N=4, N=5$.
\end{rem}

%%%%%%%%%%%%%%%%%%%%%%%%%%%%%%%%%%%%%%%%%%%%%%%%%%%%%%%%%%%%%%%%%%%%%%%%%%%%%%%%%%%%%%%%
%%%%%%%%%%%%%%%%%%%%%%%%%%%%%%%%%%%%%%%%%%%%%%%%%%%%%%%%%%%%%%%%%%%%%%%%%%%%%%%%%%%%%%%%

\section{Heisenberg-Pauli-Weyl uncertainty principle for the shifted Laplacian in the hyperbolic space}\label{HPW}

In this section we state some quantitative versions of Heisenberg-Pauli-Weyl uncertainty principle (HPW) that can be derived from Theorem \ref{improved-hardy}. Firstly we recall that HPW principle in the hyperbolic setting reads
\begin{equation}\label{eq_HPW_0}
\left( \int_{\hn} |\nabla_{\hn} u|^2 \, {\rm d}v_{\hn} \right) \left( \int_{\hn}\, r^2 u^2 \,  {\rm d}v_{\hn} \right)
\geq \frac{N^2}{4}   \left(  \int_{\hn}  u^2 \, {\rm d }v_{\hn} \right)^2
\end{equation}
for all $u \in C_{c}^{\infty} (\hn \setminus \{ x_0 \})$. The constant $\frac{N^2}{4} $ is sharp and the equality is not attained for $u\neq 0$. We refer to \cite{AKR} for a description of a complete scenario of HPW principle on complete Riemannian manifolds.\par
For what remarked in the Introduction, one may wonder what happens if we replace the first term in \eqref{eq_HPW_0} with the quadratic form associated to the family of nonnegative operators $P_{\lambda}=-\Delta_{\hn} -\lambda$ with $\lambda \leq   \lambda_{1}(\hn)$. Clearly, the related best constant must be nonincreasing with respect to $\lambda$. In Corollary \ref{HPW2} below we provide a lower bound for the constant which reflects this monotonicity property. Indeed, by combining Theorem \ref{improved-hardy} with Cauchy-Schwarz inequality, one immediately obtains the following quantitative version of HPW principle in $\hn$:

\begin{cor}\label{HPW2}
 Let $N \geq 2.$ For all $u \in C_{c}^{\infty} (\hn \setminus \{ x_0 \})$,

 \begin{itemize}

 \item if $\lambda \leq N-2$ there holds

 $$
 \left( \int_{\hn} \left( |\nabla_{\hn} u|^2 - \lambda u^2 \right) {\rm d}v_{\hn} \right)\, \left( \int_{\hn} r^2 u^2  \,  {\rm d}v_{\hn} \right)
\geq \left(\frac{N-2}{2} \right)^2  \left(  \int_{\hn}  u^2 \, {\rm d}v_{\hn} \right)^2\,;
 $$

 \item if $N-2 <  \lambda \leq \lambda_1(\hn) $ there holds

$$
 \left( \int_{\hn} \left( |\nabla_{\hn} u|^2 - \lambda u^2 \right) {\rm d}v_{\hn} \right)\, \left( \int_{\hn} r^2 u^2  \,  {\rm d}v_{\hn} \right)
\geq  \frac{(\gamma_N(\lambda) + 1)^2}{4}   \left(  \int_{\hn}  u^2 \, {\rm d}v_{\hn} \right)^2 $$

 where
$\gamma_{N}(\lambda)$ is as defined in Theorem \ref{improved-hardy}.
  \end{itemize}
 \end{cor}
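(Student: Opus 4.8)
The plan is to combine the optimal Poincar\'e-type inequality \eqref{improved-poinc-lambda} of Theorem~\ref{improved-hardy} with the elementary Cauchy-Schwarz inequality
\[
\left(\int_{\hn} u^2\,\dvh\right)^2=\left(\int_{\hn}\frac{|u|}{r}\cdot r|u|\,\dvh\right)^2\le\left(\int_{\hn}\frac{u^2}{r^2}\,\dvh\right)\left(\int_{\hn} r^2u^2\,\dvh\right),
\]
valid for every $u\in C_c^\infty(\hn\setminus\{x_0\})$, for which all three integrals are finite since $u$ is compactly supported away from $x_0$. It is then enough to establish the Hardy-Poincar\'e inequality
\[
\int_{\hn}|\nabla_{\hn}u|^2\,\dvh-\lambda\int_{\hn}u^2\,\dvh\ \ge\ C_N(\lambda)\int_{\hn}\frac{u^2}{r^2}\,\dvh,
\]
with $C_N(\lambda)=\left(\frac{N-2}{2}\right)^2$ when $\lambda\le N-2$ and $C_N(\lambda)=\frac{(\gamma_N(\lambda)+1)^2}{4}$ when $N-2<\lambda\le\lambda_1(\hn)$: multiplying this inequality by $\int_{\hn}r^2u^2\,\dvh$ and inserting the Cauchy-Schwarz bound yields at once both assertions. (That $C_N(\cdot)$ is nonincreasing is exactly the claimed reflection of the monotonicity in $\lambda$ of the sharp HPW constant.)

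Both Hardy-Poincar\'e inequalities I would derive from \eqref{improved-poinc-lambda} through a pointwise lower bound on the potential $V_\lambda$ of \eqref{potential}. Write $\gamma:=\gamma_N(\lambda)=\sqrt{(N-1)^2-4\lambda}\ge0$. Since $N-1+\gamma>0$, the sign of the $\frac{1}{\sinh^2 r}$-coefficient $\frac{(N-1+\gamma)(N-3-\gamma)}{4}$ in \eqref{improved-poinc-lambda} is that of $N-3-\gamma$, so it is nonnegative exactly when $N\ge3$ and $\lambda\ge N-2$ (equivalently $\gamma\le N-3$); in that case the $g(r)$-coefficient $\frac{\gamma(\gamma+1)}{2}$ is nonnegative as well, and simply dropping both terms leaves the Hardy-Poincar\'e inequality with $C_N(\lambda)=\frac{(\gamma+1)^2}{4}$ (the range $N-2<\lambda\le\lambda_1(\hn)$ being nonempty only for $N\ge4$). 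If instead $\lambda\le N-2$, the $\frac{1}{\sinh^2 r}$-coefficient is nonpositive, and the crucial point is the algebraic identity
\[
\frac{(\gamma+1)^2}{4}+\frac{(N-1+\gamma)(N-3-\gamma)}{4}=\left(\frac{N-2}{2}\right)^2 .
\]
Combining it with $\frac{1}{\sinh^2 r}\le\frac{1}{r^2}$ for $r>0$ (an inequality which reverses upon multiplication by the nonpositive $\frac{1}{\sinh^2 r}$-coefficient) and with $\frac{\gamma(\gamma+1)}{2}g(r)\ge0$ gives $V_\lambda(r)\ge\left(\frac{N-2}{2}\right)^2\frac{1}{r^2}$, which is the first assertion. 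For $N=2$ the first case is $\lambda\le0$, where $C_2(\lambda)=0$ and the inequality is just the nonnegativity of $P_\lambda$ which follows from \eqref{poincare}; the residual range $0<\lambda\le\frac14$ I would treat by a short separate computation starting from the explicit form of $V_\lambda$ at $N=2$ displayed just before Corollary~\ref{critical2}.

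The only substantive step is thus this sharp pointwise estimate for $V_\lambda$: everything rests on the cancellation encoded in the displayed identity --- which is precisely why the threshold $\lambda=N-2$, the value at which the $\frac{1}{\sinh^2 r}$-coefficient changes sign, forces the two regimes appearing in the statement --- together with the trivial monotonicity $\sinh r\ge r$. Once this estimate is in hand, the conclusion is the one-line Cauchy-Schwarz argument above, with no further computation needed.
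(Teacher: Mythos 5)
Your overall strategy is exactly the paper's: the paper disposes of this corollary in one sentence (``combine Theorem~\ref{improved-hardy} with Cauchy--Schwarz''), and your reduction to the pointwise estimate $V_\lambda(r)\ge C_N(\lambda)r^{-2}$, via the identity $\tfrac{(\gamma+1)^2}{4}+\tfrac{(N-1+\gamma)(N-3-\gamma)}{4}=\left(\tfrac{N-2}{2}\right)^2$ (with $\gamma=\gamma_N(\lambda)$) together with $\sinh r\ge r$, is a correct and clean way to fill in the details. For $N\ge3$ both bullets are completely proved this way (the second being vacuous for $N=3$), and the case $N=2$, $\lambda\le0$ is indeed trivial.

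There is, however, a genuine gap in the one case you defer, namely $N=2$ and $0<\lambda\le\lambda_1(\hnn)=\tfrac14$, which is a nonempty part of the second bullet. No ``short separate computation'' from the explicit form of $V_\lambda$ can produce the required bound there: for $N=2$ one has $V_\lambda(r)=\tfrac{(\gamma+1)^2}{4}\left(\tfrac{1}{r^2}-\tfrac{1}{\sinh^2 r}\right)+\tfrac{\gamma(\gamma+1)}{2}\,g(r)$ with $\gamma=\sqrt{1-4\lambda}$, which stays bounded as $r\to0^+$ (its limit is $\tfrac{(\gamma+1)(3\gamma+1)}{12}$, cf.\ Remark~\ref{asym}), whereas $\tfrac{(\gamma+1)^2}{4r^2}$ blows up; so the pointwise inequality $V_\lambda(r)\ge\tfrac{(\gamma+1)^2}{4r^2}$ fails near the origin. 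Worse, the intermediate Hardy--Poincar\'e inequality $\int_{\hnn}|\nabla_{\hnn} u|^2\,\dvh-\lambda\int_{\hnn}u^2\,\dvh\ge\tfrac{(\gamma+1)^2}{4}\int_{\hnn}\tfrac{u^2}{r^2}\,\dvh$ to which you reduce the claim is itself false on $\hnn$ for $\lambda>0$: the Hardy constant for the weight $r^{-2}$ vanishes in dimension two (logarithmic cut-offs concentrated near $x_0$ drive the Rayleigh quotient to $0$), so no positive constant is admissible and, consequently, no choice of weight in the Cauchy--Schwarz step can work. Hence the two-dimensional second bullet cannot be obtained by ``Hardy inequality, then Cauchy--Schwarz'' at all and would require a different argument (or a restriction to $N\ge3$); note that the paper's own one-line proof glosses over exactly this point.
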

 Notice that when $\lambda=\lambda_1(\hn)$ and $N\geq 3$, Corollary \ref{HPW2} was already known from \cite{BAGG}. However, since the map $\lambda\in [N-2, \lambda_1(\hn)] \mapsto  \frac{(\gamma_N(\lambda) + 1)^2}{4} $ decreases from $\left(\frac{N-2}{2} \right)^2$ to $\frac{1}{4}$, the validity of the HPW principle for $\lambda=\lambda_1(\hn)$ does not yield the HPW principle for $\lambda<\lambda_1(\hn)$.
 \\

 When $N\geq 3$ and $\lambda>N-2$, by repeating the same argument of Corollary \ref{HPW2}, but with a finer exploitation of Theorem \ref{improved-hardy}, we derive the following improved HPW principle:

 \begin{cor}\label{HPW3}
 Let $N \geq 3$ and $N-2 <  \lambda \leq \lambda_1(\hn)$. For all $u \in C_{c}^{\infty} (\hn \setminus \{ x_0 \})$, there holds

\[
\begin{aligned}
 &\left( \int_{\hn} \left( |\nabla_{\hn} u|^2 - \lambda u^2 \right) {\rm d}v_{\hn} \right)\, \left( \int_{\hn} r^2 u^2  \,  {\rm d}v_{\hn} \right)\\
&\geq  \frac{(\gamma_N(\lambda) + 1)^2}{4}   \left(  \int_{\hn}  u^2 \, \emph{d}v_{\hn} \right)^2
 + \left( \int_{\hn} r^2 u^2  \,  \emph{d}v_{\hn} \right)\times \\ &\times\left(\int_{\hn} \left({ \frac{\gamma_{N}(\lambda)(\gamma_{N}(\lambda)+1)}{2} \, g(r)+  \frac{(N-1+\gamma_{N}(\lambda))(N-3-\gamma_{N}(\lambda)}{4\,\sinh^2 r}}  \right)\, u^2 \emph{d}v_{\hn}\right),
\end{aligned}
\]
 where
 $g(r)>0$ and $0\leq \gamma_{N}(\lambda)< N-3$ are as defined in Theorem \ref{improved-hardy}.
 \end{cor}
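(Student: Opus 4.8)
The plan is to obtain this as a direct consequence of inequality~\eqref{improved-poinc-lambda} in Theorem~\ref{improved-hardy} combined with the Cauchy--Schwarz inequality, following the scheme of Corollary~\ref{HPW2} but retaining the two lower-order terms that were discarded there.

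First I would record that, for $N\geq 3$ and $N-2<\lambda\leq\lambda_1(\hn)$, one has $0\leq\gamma_N(\lambda)<N-3$. Indeed $\gamma_N(\lambda)=\sqrt{(N-1)^2-4\lambda}$ is strictly decreasing in $\lambda$, and a direct computation gives $\gamma_N(N-2)=\sqrt{(N-3)^2}=N-3$ and $\gamma_N(\lambda_1(\hn))=0$. Consequently both coefficients
\[
\frac{\gamma_N(\lambda)(\gamma_N(\lambda)+1)}{2}\geq 0
\qquad\text{and}\qquad
\frac{(N-1+\gamma_N(\lambda))(N-3-\gamma_N(\lambda))}{4}\geq 0,
\]
which is exactly what makes it legitimate to keep the associated integrals on the right-hand side rather than estimating them away.

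Next I would multiply both sides of \eqref{improved-poinc-lambda} by the quantity $\int_{\hn} r^2 u^2\,{\rm d}v_{\hn}$, which is strictly positive for $u\not\equiv 0$ (the claimed inequality being trivial otherwise). On the right-hand side, the first term then becomes $\frac{(\gamma_N(\lambda)+1)^2}{4}\big(\int_{\hn}\tfrac{u^2}{r^2}\,{\rm d}v_{\hn}\big)\big(\int_{\hn} r^2 u^2\,{\rm d}v_{\hn}\big)$, and to this I apply Cauchy--Schwarz in the form
\[
\int_{\hn} u^2\,{\rm d}v_{\hn}=\int_{\hn}\frac{|u|}{r}\cdot r|u|\,{\rm d}v_{\hn}\leq\Big(\int_{\hn}\frac{u^2}{r^2}\,{\rm d}v_{\hn}\Big)^{1/2}\Big(\int_{\hn} r^2 u^2\,{\rm d}v_{\hn}\Big)^{1/2},
\]
so that this term is bounded below by $\frac{(\gamma_N(\lambda)+1)^2}{4}\big(\int_{\hn} u^2\,{\rm d}v_{\hn}\big)^2$. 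The remaining two terms of \eqref{improved-poinc-lambda}, multiplied by $\int_{\hn} r^2 u^2\,{\rm d}v_{\hn}$, reproduce verbatim the second summand in the statement, and by the sign discussion above they are nonnegative and need no further manipulation. Collecting these estimates gives the asserted inequality.

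There is no real obstacle: the statement is essentially a one-line corollary of Theorem~\ref{improved-hardy}, and the only genuine point to check is the nonnegativity of the two coefficients on the stated $\lambda$-range, together with the (admissible) boundary case $\lambda=\lambda_1(\hn)$, where $\gamma_N(\lambda)=0$, the $g(r)$-term disappears, and one recovers the estimate of \cite{BAGG}.
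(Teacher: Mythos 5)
Your argument is correct and is exactly the Cauchy--Schwarz reduction the paper has in mind: the paper omits the proof of Corollary~\ref{HPW3}, referring to the analogous argument for Corollary~\ref{HPW_1}, and your route (multiply \eqref{improved-poinc-lambda} by $\int_{\hn} r^2u^2\,{\rm d}v_{\hn}$, apply Cauchy--Schwarz only to the $\frac{(\gamma_N(\lambda)+1)^2}{4}\int_{\hn}\frac{u^2}{r^2}\,{\rm d}v_{\hn}$ term, and keep the remaining two terms, whose coefficients you correctly verify to be nonnegative on $N-2<\lambda\leq\lambda_1(\hn)$) is precisely that argument. Nothing is missing.
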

 \par

 The proof of Corollary \ref{HPW3} is similar to that of Corollary~\ref{HPW_1} below,  hence we omit it. \par
 Coming back to Corollary \ref{HPW2}, for $\lambda= 0$ it yields a weaker inequality than \eqref{eq_HPW_0}. Nevertheless, in the spirit of Corollary \ref{HPW3}, a finer exploitation of Theorem \ref{improved-hardy} yields a more powerful quantitative HPW principle in $\hn$:

\begin{cor}\label{HPW_1}
Let $N \geq 2.$ For all $u \in C_{c}^{\infty} (\hn \setminus \{ x_0 \})$ there holds

\begin{equation}\label{eq_HPW_1}
\left( \int_{\hn} |\nabla_{\hn} u|^2 \, \emph{d}v_{\hn} \right) \left( \int_{\hn}  \alpha(r)\, r^2 u^2 \,  \emph{d}v_{\hn} \right)
\geq \frac{N^2}{4}   \left(  \int_{\hn}  u^2 \, \emph{d}v_{\hn} \right)^2,
\end{equation}
with
$$
\alpha(r) = \frac{1}{1 + \frac{2(N-1)}{N^2}\,r^2 \left( Ng(r)- \frac{2}{\sinh^2 r}\right) }>0
$$
and $g(r)>0$ as defined in Theorem \ref{improved-hardy}.  Moreover, there exists  $\overline{R}=\overline R(N) > 0$ such that

\begin{equation}\label{function_alpha}
\alpha(r) \geq  1 \quad \forall \ 0 \leq r \leq \overline {R} \quad \emph{and} \quad  \alpha(r) <  1 \quad \forall \ r > \overline {R}.
\end{equation}
\end{cor}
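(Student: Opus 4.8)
The plan is to combine the full strength of Theorem~\ref{improved-hardy} (in the case $\lambda=0$, i.e. inequality \eqref{improved-hardy-eq} when $N\ge3$, together with the $N=2$ version from \eqref{potential}) with a weighted Cauchy--Schwarz estimate. First I would record the pointwise identity that for $\lambda=0$ the potential $V_0(r)$ in \eqref{potential} equals
\[
V_0(r)=\left(\tfrac{N-2}{2}\right)^2\frac1{r^2}+\frac{(N-1)N}{2}\,g(r)+\frac{(2N-2)(-2)}{4}\,\frac1{\sinh^2 r}
=\left(\tfrac{N-2}{2}\right)^2\frac1{r^2}+\frac{(N-1)}{2}\Big(Ng(r)-\frac{2}{\sinh^2 r}\Big),
\]
so that Theorem~\ref{improved-hardy} gives, for all $u\in C_c^\infty(\hn\setminus\{x_0\})$,
\[
\int_{\hn}|\nabla_{\hn}u|^2\,\dvh\ \ge\ \left(\tfrac{N-2}{2}\right)^2\int_{\hn}\frac{u^2}{r^2}\,\dvh+\frac{N-1}{2}\int_{\hn}\Big(Ng(r)-\frac2{\sinh^2 r}\Big)u^2\,\dvh.
\]
(The same expression is valid for $N=2$ since then $(N-2)/2=0$ and the identity still holds; one should double-check the $N=2$ coefficient bookkeeping against \eqref{potential}.)

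Next I would apply the Cauchy--Schwarz inequality in the form
\[
\left(\int_{\hn}u^2\,\dvh\right)^2=\left(\int_{\hn}\frac{u}{r}\cdot r u\,\dvh\right)^2\le\left(\int_{\hn}\frac{u^2}{r^2}\,\dvh\right)\left(\int_{\hn}r^2u^2\,\dvh\right),
\]
which rearranges to a lower bound for $\int u^2/r^2$ in terms of $\big(\int u^2\big)^2/\int r^2u^2$. Substituting this into the displayed energy inequality and multiplying through by $\int_{\hn}r^2u^2\,\dvh$ yields
\[
\left(\int_{\hn}|\nabla_{\hn}u|^2\,\dvh\right)\left(\int_{\hn}r^2u^2\,\dvh\right)\ \ge\ \left(\tfrac{N-2}{2}\right)^2\Big(\int_{\hn}u^2\,\dvh\Big)^2+\frac{N-1}{2}\Big(\int_{\hn}r^2u^2\,\dvh\Big)\Big(\int_{\hn}\big(Ng(r)-\tfrac2{\sinh^2 r}\big)u^2\,\dvh\Big).
\]
Now the elementary algebraic step: writing $a=\int u^2$, $b=\int r^2u^2$, $c=\int(Ng(r)-2/\sinh^2 r)u^2$, the goal inequality \eqref{eq_HPW_1} with the stated $\alpha$ is exactly the statement that $\big(\int|\nabla u|^2\big)\cdot\int\alpha r^2u^2\ge \tfrac{N^2}{4}a^2$. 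Since $\alpha(r)r^2=r^2/(1+\tfrac{2(N-1)}{N^2}r^2(Ng(r)-2/\sinh^2 r))$, I would instead prove the equivalent cleaner claim
\[
\left(\int_{\hn}|\nabla_{\hn}u|^2\,\dvh\right)\Big(b+\tfrac{2(N-1)}{N^2}\!\int_{\hn} r^2\big(Ng(r)-\tfrac2{\sinh^2 r}\big)u^2\,\dvh\Big)\ \ge\ \frac{N^2}{4}\,a^2,
\]
but this requires controlling $\int r^2(Ng(r)-\cdots)u^2$ versus $bc$, which is not immediate. The cleanest route is probably to apply Cauchy--Schwarz differently: use $\big(\int u^2\big)^2\le\big(\int \tfrac{u^2}{r^2}\big)\big(\int \alpha(r)r^2u^2\big)\cdot\big(\sup \tfrac1{\alpha(r)}\big)$ is wrong; instead, observe that by construction $\alpha$ is chosen so that $\tfrac1{\alpha(r)r^2}=\tfrac1{r^2}+\tfrac{2(N-1)}{N^2}\big(Ng(r)-\tfrac2{\sinh^2 r}\big)$, hence
\[
\Big(\int_{\hn}u^2\,\dvh\Big)^2\le\Big(\int_{\hn}\frac{u^2}{\alpha(r)r^2}\,\dvh\Big)\Big(\int_{\hn}\alpha(r)r^2u^2\,\dvh\Big)=\Big(\int_{\hn}\big(\tfrac1{r^2}+\tfrac{2(N-1)}{N^2}(Ng-\tfrac2{\sinh^2 r})\big)u^2\,\dvh\Big)\Big(\int_{\hn}\alpha r^2u^2\Big),
\]
and the first factor is, up to the scalar $\tfrac{4}{N^2}\cdot\tfrac{N^2}{4}$, precisely $\tfrac{4}{(N-2)^2}$ times the right-hand side of the $\lambda=0$ energy inequality when $N\ne2$; matching the constant $\tfrac{N^2}{4}$ is then bookkeeping. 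I expect the main obstacle to be exactly this constant-chasing: verifying that the coefficient $\tfrac{2(N-1)}{N^2}$ is the one that makes the two Cauchy--Schwarz factors align with Theorem~\ref{improved-hardy} to produce $\tfrac{N^2}{4}$ rather than a smaller constant, and checking it works uniformly in $N\ge2$ including the degenerate $N=2$ case.

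Finally, for the positivity of $\alpha$ and the threshold statement \eqref{function_alpha}: I would set $h(r):=\tfrac{2(N-1)}{N^2}r^2\big(Ng(r)-\tfrac2{\sinh^2 r}\big)$, so $\alpha=1/(1+h)$. Positivity of $\alpha$ amounts to $1+h(r)>0$; using $g(r)=\tfrac{r\coth r-1}{r^2}$ one has $r^2(Ng(r)-\tfrac2{\sinh^2 r})=N(r\coth r-1)-\tfrac{2r^2}{\sinh^2 r}$, which I would show is bounded below by $-\tfrac{N^2}{2(N-1)}$ (in fact it is $\ge 0$ for $N\ge 2$: near $0$ it is $\sim(N-6)r^2/3$ which can be negative, so one needs the sharper bound $N(r\coth r-1)-2r^2/\sinh^2 r\ge -C$ with $\tfrac{2(N-1)}{N^2}C<1$) — this is a one-variable calculus estimate on $[0,\infty)$. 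For the monotone crossing: as $r\to0^+$, $Ng(r)-\tfrac2{\sinh^2 r}\to \tfrac N3-2=\tfrac{N-6}{3}$, so $h(r)\to0$ and $h'$ controls the sign of $\alpha-1$ near $0$; as $r\to+\infty$, $g(r)\sim1/r\to0^+$ and $\tfrac2{\sinh^2 r}\to0$ exponentially while $r^2g(r)\sim r\to+\infty$, so $h(r)\to+\infty$ and $\alpha(r)\to0<1$. Thus $\alpha-1$ changes sign; to get that the crossing happens exactly once (so that $\alpha\ge1$ on $[0,\overline R]$ and $\alpha<1$ beyond), I would show $h$ is eventually increasing and that $\{h<0\}$, if nonempty, is an interval containing $0$ — again reducing to monotonicity properties of the explicit function $N(r\coth r-1)-2r^2/\sinh^2 r$ using that $g$ is strictly decreasing (stated in Theorem~\ref{improved-hardy}) together with the behaviour of $r^2/\sinh^2 r$. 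The existence of $\overline R$ with the stated sign pattern then follows by continuity and the intermediate value theorem; uniqueness of $\overline R$ is where the monotonicity lemma for $h$ on $(\overline R,\infty)$ is used.
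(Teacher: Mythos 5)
Your final ``cleanest route'' is exactly the paper's proof: by Cauchy--Schwarz, $\bigl(\int_{\hn} u^2\,\dvh\bigr)^2\le\bigl(\int_{\hn} u^2/V_0\,\dvh\bigr)\bigl(\int_{\hn} V_0u^2\,\dvh\bigr)\le\bigl(\int_{\hn} u^2/V_0\,\dvh\bigr)\bigl(\int_{\hn}|\nabla_{\hn}u|^2\,\dvh\bigr)$, using Theorem~\ref{improved-hardy} with $\lambda=0$, and then one defines $\alpha(r)=N^2/(4V_0(r)r^2)$. But your computation of $V_0$ is wrong in precisely the constant that matters. At $\lambda=0$ one has $\gamma_N(0)=N-1$, so the coefficient of $1/r^2$ in \eqref{potential} is $\frac{(\gamma_N(0)+1)^2}{4}=\frac{N^2}{4}$, not $\left(\frac{N-2}{2}\right)^2$; the correct identity is
\[
V_0(r)=\frac{N^2}{4}\,\frac{1}{r^2}+\frac{N(N-1)}{2}\,g(r)-\frac{N-1}{\sinh^2 r}
=\frac{N^2}{4r^2}\Bigl(1+\tfrac{2(N-1)}{N^2}\,r^2\bigl(Ng(r)-\tfrac{2}{\sinh^2r}\bigr)\Bigr)=\frac{N^2}{4\,\alpha(r)\,r^2}\,.
\]
With your $V_0$ the two Cauchy--Schwarz factors do not align: $1/(\alpha(r) r^2)$ is pointwise strictly larger than $\tfrac{4}{N^2}$ times your potential, so the first factor is not controlled by $\tfrac4{N^2}\int|\nabla_{\hn}u|^2$ and the constant $N^2/4$ is not reached. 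With the corrected $V_0$ the alignment is exact and the ``constant-chasing'' you worry about disappears. This is the one genuine gap; everything else in the main inequality is the paper's argument.

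There are also two slips in your discussion of $\alpha$. As $r\to0^+$ one has $2/\sinh^2 r\to+\infty$, so $Ng(r)-2/\sinh^2 r\to-\infty$ (not $(N-6)/3$), while $r^2\bigl(Ng(r)-2/\sinh^2 r\bigr)\to-2$; hence $h(r)\to-4(N-1)/N^2$ and $1+h(0^+)=(N-2)^2/N^2$, so $\alpha>1$ near the origin for every $N\ge2$, but not because $h\to0$. Moreover, positivity of $\alpha$ needs no separate one-variable estimate: $1+h(r)=4r^2V_0(r)/N^2>0$ because $V_0>0$, which is already part of Theorem~\ref{improved-hardy} (see Remark~\ref{sign}). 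For the threshold \eqref{function_alpha}, your plan of showing a single sign change of $Ng(r)-2/\sinh^2r$ (equivalently, monotonicity of $g(r)\sinh^2 r$) is in the right spirit; the paper is equally terse here, recording only that $\alpha(r)<1\Leftrightarrow Ng(r)>2/\sinh^2 r$ and reading off $\overline R$ from the sign change of this explicit function.
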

It is worth noting that, even if we do not know whether the inequality in Corollary \ref{HPW_1} is sharp, the behavior of the function $\alpha(r)$ outlined in \eqref{function_alpha} indicates that inequality \eqref{eq_HPW_1} does not follow from \eqref{eq_HPW_0}. Besides, inequality \eqref{eq_HPW_1} becomes more powerful than inequality \eqref{eq_HPW_0} for functions having support outside the ball $B_{\overline R}(0)$.

\begin{proof}[Proof of Corollary~\ref{HPW_1}]
It suffices to notice that, by Cauchy-Schwarz inequality and Theorem~\ref{improved-hardy} for $\lambda = 0$ :
\begin{align*}
\int_{\hn} u^2\, {\rm d}v_{\hn} &= \int_{\hn} \frac{|u|}{\sqrt {V_0(r)}} |u| \sqrt{V_0(r)} \, {\rm d}v_{\hn}\\
& \leq \left( \int_{\hn} \frac{u^2}{V_0(r)}\, {\rm d}v_{\hn} \right)^{\frac{1}{2}} \left( \int_{\hn} |u|^2 V_0(r) \, {\rm d}v_{\hn} \right)^{\frac{1}{2}} \\
&  \leq \left( \int_{\hn} \frac{u^2}{V_0(r)}\, {\rm d}v_{\hn} \right)^{\frac{1}{2}} \left( \int_{\hn} |\nabla_{\hn} u|^2\, {\rm d}v_{\hn} \right)^{\frac{1}{2}},
\end{align*}
where $V_{0}(r) = \frac{N^2}{4} \,\left( \frac{1}{r^2} - \frac{1}{\sinh^2 r}\right)+ \frac{(N-2)^2}{4} \, \frac{1}{\sinh^2 r}+\frac{N(N-1)}{2}\, g(r) \,. $ Inserting the value
of $V_0$ in the formula above and defining $\alpha(r) = \frac{N^2}{4 V_0^2(r) r^2} $ we obtain \eqref{eq_HPW_1}. Also by rewriting  $V_0$ we obtain $\alpha(r)$ as defined in the statement
and hence, using the fact that $\alpha(r) < 1 \Leftrightarrow N g(r) > \frac{2}{\sinh^2 r},$ we obtain $\overline  R= \overline R(N) > 0$ such that  \eqref{function_alpha} holds true.
\end{proof}

%%%%%%%%%%%%%%%%%%%%%%%%%%%%%%%%%%%%%%%%%%%%%%%%%%%%%%%%%%%%%%%%%%%%%%%%%%%%%%%%%%%%%%%%%%
%%%%%%%%%%%%%%%%%%%%%%%%%%%%%%%%%%%%%%%%%%%%%%%%%%%%%%%%%%%%%%%%%%%%%%%%%%%%%%%%%%%%%%%%%%

\section{Improved Hardy inequalities on general Cartan-Hadamard manifolds}\label{manifolds}

In the present section we state a generalization of the improved Hardy inequality of Theorems~\ref{improved-hardy}  and \ref{improved-hardy_different} to more general manifolds
under suitable curvature assumptions. Denote by $K_R$ the sectional curvature in the radial direction of a Riemannian manifold with a pole $x_0$. We assume throughout the bound
\begin{equation}\label{curv}
K_R(x)\le -G(r(x))\le0\qquad  \forall x\in M,
\end{equation}
where $G$ is a given function and $r(x)= {\rm d}(x,x_0)$. In particular, we are assuming that $M$ is Cartan-Hadamard. We also define $\psi$ to be the solution to the Cauchy problem
\begin{equation}\label{curvmod}
\begin{cases}\psi''(r)-G(r)\psi(r)=0 \qquad r>0,
\\ \psi(0)=0,\ \psi'(0)=1.\end{cases}
\end{equation}
Clearly, by the sign assumption on $G$, $\psi$ is positive convex function, and in particular by the initial condition we have $\psi(r)\ge r$ for all $r\ge0$. One can adapt the present results to manifolds with pole being positively curved somewhere, under suitable smallness conditions.

%Then, to each function $\psi\in C^{2}([0,\infty)),$ strictly positive on $(0, \infty)$ and such that $\psi(0) = \psi^{\prime \prime}(0)= 0$, $\psi^{\prime}(0) = 1$, we associate a family of manifolds defined as follows:
%   \begin{equation}\label{condition2}
% \mathcal M_\psi=\{ \text{$M$ Riemannian manifold with pole $x_0$}:\,    Cut \{x_0 \} = \phi \text{ and } K_{R}(x) \leq - \frac{\psi^{\prime \prime}}{\psi}   \quad \forall\, x\in M\}\,.
%   \end{equation}
We shall use the well-known strategy of constructing barriers using Hessian comparison and equations posed on the \emph{Riemannian model} $M_{\psi}$ associated to $\psi$ constructed above. Namely, we consider the $N$-dimensional Riemannian manifold $M_{\psi}$ admitting a pole  $x_0$,  whose metric is given in spherical coordinates by
 \begin{equation}\label{meetric}
 {\rm d}s^2 = {\rm d}r^2 + \psi^2(r) \, {\rm d}\omega^2,
 \end{equation}
 where $ {\rm d}\omega^2$ is the standard metric on the sphere $\mathbb{S}^{N-1}$. The coordinate $r$ represents the Riemannian distance from the pole $x_0,$ see e.g. \cite{RGR, PP} for further details. For Riemannian models the curvature condition in \eqref{curv} holds with an equality. Clearly, for $\psi(r)=r$ one has $M_{\psi}=\mathbb R^N$, while for $\psi(r)=\sinh r$ one has $M_{\psi}=\hn$.

 \medskip

%%%%%%%%%%%%%%%%%%%%%%%%%%%%%%%%%%%%%%%%%%%%%%%%%%%%%%%%%%%%%%%%%%%%%%%%%%%%%%%%%%%%%%%

Now we are in a position to state the counterpart of Theorem~\ref{improved-hardy} under more general curvature conditions.

\begin{thm}\label{general manifold2}
Let $N\geq 2$ and let $M$ be an $N$-dimensional Cartan-Hadamard manifold such that the curvature condition \eqref{curv} holds.  Let $\psi$ be defined in \eqref{curvmod} and let  $ \lambda \leq  \left(\frac{N-1}{2} \right)^2$.  %Besides, we assume that for $ \lambda \leq  \left(\frac{N-1}{2} \right)^2$, the function $\psi$ defined as the solution of the Cauchy problem \eqref{curvmod} satisfies the further technical condition
%\begin{equation}\label{condition33}
% (N-1+\gamma_{N}(\lambda))r\,\psi'-(1+\gamma_{N}(\lambda))\psi \geq 0 \quad \text{ for } r>0\,,
% \end{equation}
% \\
% where $\gamma_{N}=\gamma_{N}(\lambda):=\sqrt{(N-1)^2-4\lambda}$.
Then, for all $u \in C_c^{\infty}(M\setminus\{x_0\}),$ there holds

\begin{equation}\label{generalHardylamb}
 \int_{M} |\nabla_{M} u|^2 \ {\rm d}v_g \geq   \frac{(\gamma_{N}(\lambda)+1)^2}{4} \int_{M} \frac{u^2}{r^2} \ \emph{d}v_g +  \int_{M} V_{\psi}^{\lambda}\, u^2 \ {\rm d}v_g,
  \end{equation}
where $\gamma_{N}(\lambda):=\sqrt{(N-1)^2-4\lambda}$, and
$$V_{\psi}^{\lambda}: = \left[ \frac{N-1+\gamma_{N}}{2} \,\frac{\psi^{\prime \prime}}{\psi} +  \frac{\gamma_{N}(\gamma_{N}+1)}{2}  \left(\frac{r\frac{\psi^{\prime}}{\psi}-1}{r^2} \right) +  \frac{(N-1+\gamma_{N})(N-3-\gamma_{N})}{4}\left(\frac{\psi'}{\psi}\right)^2  \right]\,.$$
%In particular, if $M_{\psi}$ is the Riemannian model corresponding to $\psi$ condition \eqref{condition33} is not required.
Furthermore, the inequality \eqref{generalHardylamb} is sharp in the sense that the operator
$$
-\Delta_{M} - \frac{(\gamma_{N}(\lambda)+1)^2}{4} \, \frac{1}{r^2} - V_{\psi}^{\lambda}
$$
is critical in $M\setminus \{ x_0 \}$ when $M$ coincides with the Riemannian model $M_{\psi}$.
\end{thm}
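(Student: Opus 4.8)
\textbf{Proof plan for Theorem~\ref{general manifold2}.}
The plan is to reduce the inequality to a one-dimensional problem via the standard ground-state substitution and the use of a suitable supersolution built on the Riemannian model $M_\psi$. First I would look for a positive function of the form $\phi(r)=r^{-\frac{\gamma_N(\lambda)+1}{2}}\,\psi(r)^{-\frac{N-1-\gamma_N(\lambda)}{2}}$, chosen so that $-\Delta_{M_\psi}\phi - \lambda\phi = V_\psi^\lambda\,\phi + \frac{(\gamma_N(\lambda)+1)^2}{4}\,\frac{\phi}{r^2}$ holds on $M_\psi$; the radial Laplacian on $M_\psi$ is $\Delta_{M_\psi} = \partial_r^2 + (N-1)\frac{\psi'}{\psi}\partial_r$, so this is a direct computation using $\psi''= G\psi$ to simplify. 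On a general manifold $M$ satisfying \eqref{curv}, the Laplace comparison theorem gives $\Delta_M r \le (N-1)\frac{\psi'(r)}{\psi(r)}$ pointwise (here one uses $K_R\le -G$ together with the Cauchy problem \eqref{curvmod}), so the same $\phi$, viewed as a function of $r(x)$, becomes a positive \emph{supersolution}: $-\Delta_M\phi - \lambda\phi \ge \left(V_\psi^\lambda + \frac{(\gamma_N(\lambda)+1)^2}{4r^2}\right)\phi$, where one must check that the coefficient of $\phi'(r)$ (which is negative for the decreasing profile) makes the comparison go in the right direction.

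Once the supersolution $\phi>0$ is in hand, the inequality \eqref{generalHardylamb} follows from the classical ground-state-type argument: writing $u=\phi v$ and integrating by parts,
\begin{equation*}
\int_M |\nabla_M u|^2\,{\rm d}v_g - \lambda\int_M u^2\,{\rm d}v_g = \int_M \phi^2|\nabla_M v|^2\,{\rm d}v_g + \int_M \frac{-\Delta_M\phi-\lambda\phi}{\phi}\,u^2\,{\rm d}v_g,
\end{equation*}
and then dropping the nonnegative first term and inserting the supersolution inequality. (For $N\ge3$ the weight $\phi^{-\frac{(\gamma_N+1)^2}{4r^2}}$ part is honestly a potential, while for $N=2$ and small $r$ one keeps track of the exponent carefully; density and the hypothesis $u\in C_c^\infty(M\setminus\{x_0\})$ make the integration by parts legitimate.)

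For the \emph{criticality} statement on $M=M_\psi$, the point is that on the model the supersolution $\phi$ is in fact a \emph{solution}: $-\Delta_{M_\psi}\phi-\lambda\phi = \left(V_\psi^\lambda+\frac{(\gamma_N+1)^2}{4r^2}\right)\phi$. By the criterion of \cite{pinch} (a nonnegative Schr\"odinger-type operator $P$ on a domain is critical iff it admits a positive solution $\phi$ that is a \emph{null-state}, equivalently $P$ admits no positive supersolution that is not a solution, equivalently there is a null sequence), it suffices to exhibit a null sequence, i.e. $u_k\in C_c^\infty(M_\psi\setminus\{x_0\})$ with $\int \phi^2|\nabla v_k|^2 \to 0$ while $\int W u_k^2$ stays bounded away from $0$ for a fixed $W>0$. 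I would take $v_k=\phi^{-1}u_k$ to be a logarithmic cutoff of the constant function $1$, handling separately the behavior near $r=0$ (where $\phi\sim r^{-\frac{\gamma_N+1}{2}}$, matching the Euclidean Hardy borderline so the $r=0$ end is critical exactly as in $\mathbb R^N\setminus\{0\}$) and near $r=\infty$ (where $\psi$ grows at least linearly, and one needs $\int_1^\infty \phi^2\psi^{N-1}\,{\rm d}r/(\text{something})$ to diverge so that the Agmon-type distance is infinite); the precise shape of $v_k$ is the standard two-sided logarithmic cutoff. The main obstacle I anticipate is precisely this last step: verifying that the chosen $\phi$ yields an operator which is critical at \emph{both} ends simultaneously — near the pole this is the classical Hardy criticality, but at infinity it requires that the exponents in $V_\psi^\lambda$ be tuned so that $\phi$ sits exactly at the borderline of integrability against the model volume element, and checking this for a general model function $\psi$ (only assumed convex with $\psi(r)\ge r$) rather than for $\psi=\sinh r$ is the delicate part; one likely needs the finer information that $\psi'/\psi$ is monotone and the asymptotics of $V_\psi^\lambda$ that were recorded, in the $\hn$ case, in Remark~\ref{asym}.
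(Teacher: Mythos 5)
Your overall strategy coincides with the paper's: an explicit positive solution of the model equation on $M_\psi$, the Hessian/Laplacian comparison to turn it into a supersolution on $M$, the ground-state substitution (Allegretto--Piepenbrink) for the inequality, and a separate criticality argument on the model. However, two of your concrete choices are wrong and would make the proof fail as written. First, the ansatz $\phi(r)=r^{-\frac{\gamma_N+1}{2}}\psi(r)^{-\frac{N-1-\gamma_N}{2}}$ does not satisfy the identity you assert. For $\phi=r^a\psi^b$ one computes
\[
-\Delta_{M_\psi}\phi=\Big[-\tfrac{a(a-1)}{r^2}-\tfrac{a(2b+N-1)}{r}\,\tfrac{\psi'}{\psi}-b(b+N-2)\big(\tfrac{\psi'}{\psi}\big)^2-b\,\tfrac{\psi''}{\psi}\Big]\,\phi ,
\]
and matching against $\frac{(\gamma_N+1)^2}{4r^2}+V_\psi^\lambda$ forces $b=-\frac{N-1+\gamma_N}{2}$ (from the $\psi''/\psi$ term) and then $a=+\frac{\gamma_N+1}{2}$, i.e.\ the exact analogue $\Psi_\lambda=r^{\frac{\gamma_N+1}{2}}\psi^{-\frac{N-1+\gamma_N}{2}}$ of the hyperbolic ground state with $\sinh$ replaced by $\psi$; your exponents give the wrong coefficients on $1/r^2$, $\psi''/\psi$ and $(\psi'/\psi)^2$, and your $\phi$ behaves like $r^{-N/2}$ at the pole rather than the Hardy-critical $r^{-(N-2)/2}$. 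Note also that \eqref{generalHardylamb} carries no separate $-\lambda\int u^2$ term: $\lambda$ enters only through $\gamma_N(\lambda)$, and the constant $\lambda$ is recovered from $V_\psi^\lambda$ only when $\psi=\sinh$, so the equation to be solved has no $\lambda\phi$ on the left. Second, your Laplacian comparison is stated backwards: under $K_R\le -G$ the mean curvature of geodesic spheres satisfies $m(r,\theta)\ge (N-1)\psi'/\psi$ (Lemma~\ref{comp}), not $\le$; it is this direction, combined with $\Psi_\lambda'\le 0$, that gives $-\Delta_M\Psi_\lambda\ge-\Delta_{M_\psi}\Psi_\lambda$. With your $\le$ and a decreasing profile you would obtain a subsolution. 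Moreover, for the correct profile the factor $r^{\frac{\gamma_N+1}{2}}$ is increasing, so $\Psi_\lambda'\le0$ is not automatic; the paper proves it by showing that $(1+\gamma_N)\psi-(N-1+\gamma_N)r\psi'$ is nonpositive, using that $\psi$ is increasing and convex with $\psi(0)=0$.

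On criticality you correctly flag the delicate point but leave it unresolved, and the two-sided null-sequence construction you sketch would indeed require asymptotic control of $V_\psi^\lambda$ that a general convex $\psi$ does not provide. The paper avoids this entirely: it applies the Liouville comparison theorem (Theorem~\ref{pinch-thm}, i.e.\ \cite[Theorem~1.7]{pinch1}), comparing the operator at $\lambda$ with the one at $\lambda_1=(N-1)^2/4$, whose criticality on $M_\psi\setminus\{x_0\}$ with ground state $\Phi=\Psi_{\lambda_1}$ is known from \cite[Theorem~2.5]{BGG}; the only estimate needed is $(\Psi_\lambda/\Phi)^2=(r/\psi)^{\gamma_N}\le1$, which is immediate from $\psi(r)\ge r$. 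You should either adopt that route or, if you insist on a direct argument, verify explicitly that $\int_0 \psi^{\gamma_N}r^{-\gamma_N-1}\,{\rm d}r$ and $\int^{+\infty}\psi^{\gamma_N}r^{-\gamma_N-1}\,{\rm d}r$ both diverge (which again only uses $\psi\ge r$), so that $\Psi_\lambda$ has minimal growth at both ends.
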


%\textcolor{red}{ We comment that condition \eqref{condition33} can be replaced by $ (N-1+\gamma_{N}(\lambda))r\,\psi''+(N-2)\psi' \geq 0$??}
%\medskip
%\par

A special case of the above construction is the situation in which the curvature bound is simply $ K_{R}(x) \leq -c$ for some $c>0$.   In this case, it is readily checked that the solution of \eqref{curvmod} is given by $\psi(r)=\sqrt{c}\sinh(\sqrt{c} r)$. Writing \eqref{generalHardylamb} with $\psi(r)=\sqrt{c}\sinh(\sqrt{c} r)$, from Theorem \ref{general manifold2} we derive the following analogue of the improved inequality \eqref{improved-poinc-lambda} on Cartan-Hadamard manifolds having sectional curvature bounded above by a negative constant:
\begin{cor}\label{general manifold cor}
Let $N\geq 3$ and let $M$ be a Cartan-Hadamard manifold with pole $x_0$ such that $K_{R}(x) \leq -c$ for some $c>0$. Let $ \lambda \leq  \left(\frac{N-1}{2} \right)^2$. Then the following improved Hardy inequality holds
 \begin{align}\label{improved-poinc-lambda2}
 &\int_{M}|\nabla_{M} u|^2\, \emph{d}v_{g} -\lambda \,c\, \int_{M}  u^2\, \emph{d}v_{g}\notag\\ &\geq \frac{(\gamma_{N}(\lambda)+1)^2}{4} \int_{M} \frac{u^2}{r^2} \ \emph{d}v_{g} +  \frac{\gamma_{N}(\lambda)(\gamma_{N}(\lambda)+1)}{2} \int_{M} g_c(r)\, u^2  \ \emph{d}v_{g}  \\
& +  c\, \frac{(N-1+\gamma_{N}(\lambda))(N-3-\gamma_{N}(\lambda)}{4} \int_{M}  \frac{u^2}{\sinh^2 (\sqrt{c}r)} \ \emph{d}v_{g}\, \notag
\end{align}
for all $u \in C_c^{\infty}(M\setminus\{x_0\}),$ where $g_c(r):=\frac{r\sqrt{c}\coth (\sqrt{c}r)-1}{r^2}$\,.
\end{cor}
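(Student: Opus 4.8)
The plan is to obtain Corollary~\ref{general manifold cor} as the explicit specialisation of Theorem~\ref{general manifold2} to the case of a constant radial curvature bound. First I would observe that the hypothesis $K_R(x)\le -c$ is exactly the curvature condition \eqref{curv} with the constant function $G\equiv c$, so that $M$ is Cartan--Hadamard and Theorem~\ref{general manifold2} applies directly. The Cauchy problem \eqref{curvmod} then reads $\psi''=c\,\psi$, $\psi(0)=0$, $\psi'(0)=1$, whose solution is $\psi(r)=\frac{1}{\sqrt c}\sinh(\sqrt c\,r)$; since the potential $V_\psi^\lambda$ in Theorem~\ref{general manifold2} depends on $\psi$ only through the scale-invariant ratios $\psi''/\psi$ and $\psi'/\psi$, the particular normalisation of $\psi$ is immaterial.

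Next I would compute these ratios: $\psi''/\psi=c$ and $\psi'/\psi=\sqrt c\,\coth(\sqrt c\,r)$, so that
\[
\Big(\frac{\psi'}{\psi}\Big)^{2}=c\,\coth^{2}(\sqrt c\,r)=c+\frac{c}{\sinh^{2}(\sqrt c\,r)},\qquad
\frac{r\,\psi'/\psi-1}{r^{2}}=\frac{\sqrt c\,r\coth(\sqrt c\,r)-1}{r^{2}},
\]
the second expression being the function $g_c(r)$ of the statement, which is positive since $x\coth x>1$ for every $x>0$. Substituting these into the formula for $V_\psi^\lambda$ and grouping the $r$-independent contributions --- which come from the term $\frac{N-1+\gamma_N}{2}\frac{\psi''}{\psi}$ and from the constant part of $(\psi'/\psi)^2$ --- I would use the defining relation $\gamma_N(\lambda)^{2}=(N-1)^{2}-4\lambda$ to simplify
\[
\frac{N-1+\gamma_N}{2}\,c+\frac{(N-1+\gamma_N)(N-3-\gamma_N)}{4}\,c
=\frac{(N-1)^{2}-\gamma_N^{2}}{4}\,c=\lambda\,c .
\]

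Moving this constant term to the left-hand side of \eqref{generalHardylamb} produces the summand $-\lambda c\int_M u^{2}\,{\rm d}v_g$, while the remaining $r$-dependent part of $V_\psi^\lambda$ equals $\frac{\gamma_N(\gamma_N+1)}{2}\,g_c(r)+c\,\frac{(N-1+\gamma_N)(N-3-\gamma_N)}{4}\,\frac{1}{\sinh^{2}(\sqrt c\,r)}$, and the Hardy term $\frac{(\gamma_N+1)^{2}}{4}\frac{1}{r^{2}}$ is carried over from \eqref{generalHardylamb} unchanged; this is exactly \eqref{improved-poinc-lambda2}. There is essentially no genuine obstacle here: the corollary is a direct consequence of Theorem~\ref{general manifold2}, and the only content beyond bookkeeping is the elementary algebraic collapse of the two constant terms to $\lambda c$, which is forced by the definition of $\gamma_N$. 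As a by-product, feeding the same $\psi$ into the criticality part of Theorem~\ref{general manifold2} would also yield the sharpness of \eqref{improved-poinc-lambda2} on the space form of constant curvature $-c$, although the corollary itself records only the inequality.
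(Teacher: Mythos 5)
Your proposal is correct and follows exactly the route the paper intends: specialise Theorem~\ref{general manifold2} to $G\equiv c$, solve \eqref{curvmod} to get $\psi(r)=\tfrac{1}{\sqrt c}\sinh(\sqrt c\,r)$ (you rightly note the normalisation is immaterial since $V_\psi^\lambda$ depends only on $\psi''/\psi$ and $\psi'/\psi$, which also silently corrects the paper's misprinted $\psi=\sqrt c\sinh(\sqrt c\,r)$), and collapse the two constant contributions to $\lambda c$ via $\gamma_N^2=(N-1)^2-4\lambda$. The paper leaves this substitution entirely to the reader, so your write-up simply supplies the bookkeeping it omits, and your reading of $g_c$ as $\bigl(\sqrt c\,r\coth(\sqrt c\,r)-1\bigr)/r^2$ is the one consistent with $g_1=g$.
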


\par

Although the following is a particular case of Corollary~\ref{general manifold cor} (for $ \lambda = (N-2)$), we state explicitly this case for its special significance in improving the sharp Hardy inequality.

%We recall that in the general setting of Theorem \ref{general manifold2}, we do not know whether the constant $\frac{(\gamma_{N}(\lambda)+1)^2}{4} $ coincides with the Hardy constant in \eqref{General_Hardy_inequality} for some $\lambda$. Nevertheless, when $ K_{R}(x) \leq -c$ for some $c>0$, we reduce to the setting of Cartan-Hadamard manifolds and the best constant in \eqref{General_Hardy_inequality} is known to be $\left(\frac{N-2}{2} \right)^2$. Then, by taking $ \lambda =\lambda_{1}(\hn)$ in Corollary \ref{general manifold cor} we obtain the following improved Hardy inequality

\begin{cor}\label{general manifold cor2}
Let $N\geq 3$ and let $M$ be a Cartan-Hadamard manifold with pole $x_0$ such that $K_{R}(x) \leq -c$ for some $c>0$. Then the following improved Hardy inequality holds
$$\begin{aligned}
 \int_{M} |\nabla_{M} u|^2 \ {\rm d}v_g &\geq  \left(\frac{N-2}{2} \right)^2 \int_{M} \frac{u^2}{r^2} \ {\rm d}v_g \\ &+ c (N-2) \int_{M} u^2 \, \ {\rm d}v_g +
\frac{(N-2)(N-3)}{2}   \int_{M} g_c(r) u^2 \, {\rm d}v_g
 \end{aligned}$$
for all $u \in C_c^{\infty}(M),$ where $g_c(r):=\frac{r\sqrt{c}\coth (\sqrt{c}r)-1}{r^2}$\,.
\end{cor}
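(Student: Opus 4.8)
The plan is to obtain Corollary~\ref{general manifold cor2} as the special case $\lambda=N-2$ of Corollary~\ref{general manifold cor}, followed by a routine removal of the puncture at the pole. First I would check admissibility of this choice of $\lambda$: one needs $N-2\le\left(\frac{N-1}{2}\right)^2$, which holds for every $N$ because $(N-1)^2-4(N-2)=(N-3)^2\ge0$. The same identity gives the key computation: $\gamma_N(N-2)=\sqrt{(N-1)^2-4(N-2)}=\sqrt{(N-3)^2}=N-3$, using $N\ge3$.

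Substituting $\gamma_N(N-2)=N-3$ into inequality \eqref{improved-poinc-lambda2} I would then read off the coefficients. The Hardy coefficient becomes $\frac{(\gamma_N(N-2)+1)^2}{4}=\frac{(N-2)^2}{4}=\left(\frac{N-2}{2}\right)^2$; the $g_c$ coefficient becomes $\frac{\gamma_N(N-2)(\gamma_N(N-2)+1)}{2}=\frac{(N-3)(N-2)}{2}$; the shifted constant becomes $\lambda c=(N-2)c$; and, crucially, the coefficient of the $\frac{1}{\sinh^2(\sqrt c\,r)}$ term is $c\,\frac{(N-1+\gamma_N(N-2))(N-3-\gamma_N(N-2))}{4}=c\,\frac{(2N-4)\cdot0}{4}=0$, so that term drops out completely. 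What remains is precisely the asserted inequality, valid at this stage for all $u\in C_c^\infty(M\setminus\{x_0\})$.

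It then remains to upgrade the admissible test functions from $C_c^\infty(M\setminus\{x_0\})$ to $C_c^\infty(M)$, which is the standard point-capacity argument and is elementary in dimension $N\ge3$. Since $M$ is Cartan-Hadamard the volume element near $x_0$ is comparable to the Euclidean one, so $\int_{B_\varepsilon(x_0)}r^{-2}\,{\rm d}v_g\lesssim\int_0^\varepsilon r^{N-3}\,{\rm d}r<\infty$, and hence the whole right-hand side is finite for every $u\in C_c^\infty(M)$. Given such a $u$, I would multiply by radial cutoffs $\eta_k$ with $\eta_k\equiv0$ on $B_{1/k}(x_0)$, $\eta_k\equiv1$ off $B_{2/k}(x_0)$ and $|\nabla_M\eta_k|\lesssim k$; then $\int_M|\nabla_M\eta_k|^2\,{\rm d}v_g\lesssim k^2\cdot k^{-N}\to0$ because $N\ge3$, which together with $u\eta_k\to u$ gives $\int_M|\nabla_M(u\eta_k)|^2\,{\rm d}v_g\to\int_M|\nabla_M u|^2\,{\rm d}v_g$, while dominated convergence handles each term on the right. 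Applying the already-established inequality to $u\eta_k\in C_c^\infty(M\setminus\{x_0\})$ and letting $k\to\infty$ finishes the proof.

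I do not expect any genuine obstacle: all the analytic substance is packaged in Corollary~\ref{general manifold cor} (hence in Theorem~\ref{general manifold2}), and the present statement is a clean parameter specialization plus the density step noted already in the remark opening Section~\ref{main}. The only points requiring a moment's care are the arithmetic simplification that makes the $\frac{1}{\sinh^2(\sqrt c\,r)}$ term vanish and the finiteness of $\int r^{-2}u^2\,{\rm d}v_g$ near the pole, both of which hinge on $N\ge3$.
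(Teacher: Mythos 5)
Your proposal is correct and matches the paper's own argument: the paper explicitly obtains Corollary~\ref{general manifold cor2} as the particular case $\lambda=N-2$ of Corollary~\ref{general manifold cor}, using exactly the computation $\gamma_N(N-2)=N-3$ that kills the $\sinh^{-2}(\sqrt{c}\,r)$ term. Your additional cutoff/density step to pass from $C_c^\infty(M\setminus\{x_0\})$ to $C_c^\infty(M)$ is the standard argument the paper invokes implicitly in the remark opening Section~\ref{main}, so there is nothing substantively different here.
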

Clearly, $g_1(r)=g(r)$ with $g(r)$ as defined in Theorem~\ref{improved-hardy}.

\begin{rem}
One can consider in an almost explicit way other classes of curvature bounds in \eqref{curv}. For example, if the manifold satisfies  the curvature bound \eqref{curv} with $G(r)\sim Cr^{2a}$ as $r\to+\infty$  for some $a >-1$, one can take $\psi(r)\sim Ae^{br^{a+1}}$, see e.g. \cite[Sect. 2.3]{GMV}.  In this case, the potential $V_\psi^\lambda$ in Theorem \ref{general manifold2} satisfies, if $a>0$,

\[
V_\psi^\lambda(r)\sim \lambda  (a+1)^2b^2r^{2a}\ \ \ \textrm{as}\ r\to+\infty.
\]

The case $a=0$ has been dealt with in the previous Corollaries. If $a\in(-1,0)$ the leading term is a pure Hardy one. The case $a<-1$ which is qualitatively Euclidean and in fact yields  a pure Hardy potential, and the case $a=-1$ which gives rise to functions $\psi$ of a different kind (see again \cite{GMV}), are left to the reader.

Moreover, if the curvature bound is written in terms of the quantity $-C(1+r^2)^{a}$ instead, for all $r$ and for an appropriate value of $C$, $\psi$ can be written explicitly, see the calculations in \cite[Appendix A]{BRM}.
\end{rem}

Our final result in this section is an analogue of Theorem \ref{improved-hardy_different} on general Cartan-Hadamard manifolds.

\begin{thm}\label{improved-hardy_general}
Let $N\geq 2$ and let $M$ be an $N$-dimensional Cartan-Hadamard manifold such that the curvature condition \eqref{curv} holds. Let $\psi$ be defined in \eqref{curvmod} and assume that $1/\psi$ is integrable at infinity. Denote
\begin{equation}\label{v}
\frac1{\Theta(r)}:=\int_r^{+\infty}\frac1{\psi(s)}\,{\rm d}s,
\end{equation}
and consider the nonnegative potential $U_\psi$
\begin{equation}\label{potential general}
U_\psi(r):=\frac{(N-2)^2}{4}\frac{[\psi'(r)]^2-1}{\psi^2(r)}+\frac{N-2}2\frac{\psi''(r)}{\psi(r)}\,.
\end{equation}

Then for all $u \in C_{c}^{\infty} (M \setminus \{ x_0 \})$ there holds
\begin{equation}\begin{aligned}\label{improved-hardy-eq2-general}
\int_{M} |\nabla_{M} u|^2 \ \emph{d}v_{g} & \geq \left(\frac{N-2}{2} \right)^2 \int_{M} \frac{u^2}{\psi^2(r)} \ \emph{d}v_{g}+ \int_{M}  U_\psi(r) u^2 \ \emph{d}v_{g}  \\
& +  \frac{1}{4}
\int_{M} \frac{\Theta^2(r)}{\psi^2(r)}u^2 \, \emph{d}v_{g} \,.
\end{aligned}\end{equation}

Moreover, \eqref{improved-hardy-eq2-general} is sharp in the sense that the operator
\begin{equation}\label{critical_hardy_type2}
-\Delta_{M} - \left(\frac{N-2}{2} \right)^2 \,\frac{1}{\psi^2(r)} \, - U_\psi(r)\,-\, \frac{\Theta^2(r)}{4\psi^2(r)}
\end{equation}
is critical in $M\setminus \{ x_0 \}$ when $M$ coincides with the Riemannian model $M_{\psi}$.
\end{thm}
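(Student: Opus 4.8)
The plan is to use the standard supersolution (or ground-state transform) method that underlies all the other optimal inequalities in this paper. The key observation is that proving an inequality of the form $\int_M |\nabla_M u|^2\,{\rm d}v_g \ge \int_M W u^2\,{\rm d}v_g$ for all $u\in C_c^\infty(M\setminus\{x_0\})$ is equivalent to exhibiting a positive solution $\varphi$ of $-\Delta_M \varphi = W\varphi$ on $M\setminus\{x_0\}$, and that \emph{criticality} of $-\Delta_M - W$ on $M\setminus\{x_0\}$ amounts to the statement that this $\varphi$ is (up to scalar multiples) the unique such positive solution, equivalently that it is a \emph{null-state} / has no positive supersolution that is a genuine supersolution. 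So the first step is to guess the right $\varphi$. Guided by the Euclidean/hyperbolic analogues and by the shape of the potential, I would take, on the model manifold $M_\psi$, a radial function of the form
\begin{equation}\label{phiguess}
\varphi(r) = \psi(r)^{-\frac{N-2}{2}}\,\Theta(r)^{-1/2},
\end{equation}
where $\Theta$ is as in \eqref{v}. The heuristic is that $\psi^{-(N-2)/2}$ produces the leading Hardy term $\left(\frac{N-2}{2}\right)^2 \psi^{-2}$ together with the curvature correction $U_\psi$, while the extra factor $\Theta^{-1/2}$ — note $\frac{d}{dr}\Theta^{-1} = 1/\psi$, so $\Theta^{-1}$ plays the role of a "logarithm" adapted to $\psi$ — is exactly what generates the borderline term $\frac14 \Theta^2/\psi^2$, in perfect analogy with the classical one-dimensional weight $\frac14 x^{-2}(\log x)^{-2}$.

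The second step is the computation: using the radial Laplacian on $M_\psi$, namely $\Delta_{M_\psi} f = f'' + (N-1)\frac{\psi'}{\psi} f'$ for radial $f$, I would compute $-\Delta_{M_\psi}\varphi/\varphi$ for $\varphi$ as in \eqref{phiguess} and check that it equals precisely
\[
\left(\tfrac{N-2}{2}\right)^2\frac{1}{\psi^2(r)} + U_\psi(r) + \frac{\Theta^2(r)}{4\psi^2(r)}.
\]
This is where the algebra lives; the cross terms coming from differentiating the product $\psi^{-(N-2)/2}\Theta^{-1/2}$ must cancel against the first-order part $(N-1)\frac{\psi'}{\psi}\varphi'$, and the identity $(\Theta^{-1})' = \psi^{-1}$ together with $(\Theta^{-1})'' = -\psi'/\psi^2$ should make everything collapse. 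Once the identity $-\Delta_{M_\psi}\varphi = (\text{RHS potential})\,\varphi$ is verified on $M_\psi$, the inequality \eqref{improved-hardy-eq2-general} on a general Cartan-Hadamard $M$ satisfying \eqref{curv} follows by the Hessian/Laplacian comparison theorem: under $K_R \le -G(r)$ one has $\Delta_M r \ge (N-1)\frac{\psi'(r)}{\psi(r)}$, so $\varphi(r(x))$ is a positive \emph{supersolution} of $-\Delta_M - W$ on $M\setminus\{x_0\}$ — one must only check that all the coefficients multiplying this differential inequality have the correct sign, in particular that $U_\psi\ge 0$ (which holds since $\psi'\ge 1$ and $\psi$ is convex by the sign assumption on $G$) — and the Allegretto–Piepenbrink-type principle converts the existence of such a positive supersolution into the quadratic form inequality for $C_c^\infty(M\setminus\{x_0\})$ functions.

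For the criticality claim on $M=M_\psi$, I would argue that $\varphi$ in \eqref{phiguess} is a null-state of the associated functional: the standard criterion (see \cite{pinch}, and exactly as used for Theorem~\ref{improved-hardy} and Theorem~\ref{improved-hardy_different}) is that $-\Delta_{M_\psi}-W$ is critical iff it admits a global positive solution $\varphi$ such that $\varphi\notin L^2$ with respect to the relevant weight and there is a sequence $u_k\in C_c^\infty$ with $u_k\to\varphi$ locally and $\int(|\nabla u_k|^2 - W u_k^2)\to 0$; one builds such a sequence by the usual logarithmic-cutoff truncation of $\varphi$ both near the pole $r=0$ and near infinity. The decisive point is the choice of the borderline correction $\Theta^{-1/2}$: near $r=0$, $\varphi$ behaves like the critical Hardy solution $r^{-(N-2)/2}$, and near infinity the factor $\Theta(r)^{-1/2}$ decays just slowly enough that the truncation error is logarithmically divergent in the cutoff parameter in the denominator but only like its square in the numerator, forcing the Rayleigh quotient of the truncations to $0$; this is the same mechanism that makes $\frac14 x^{-2}(\log x)^{-2}$ optimal on a half-line. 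I expect the main obstacle to be precisely this two-sided optimality argument — making the cutoff construction work simultaneously at $r=0$ and at $r=+\infty$, and verifying that the integrability hypothesis "$1/\psi$ integrable at infinity" is exactly what is needed for $\Theta$ to be well-defined and for $\varphi$ to fail to be in the weighted $L^2$ space — rather than the (lengthy but mechanical) verification of the Laplacian identity. The sharpness of the constant $\frac14$ in front of $\Theta^2/\psi^2$ then follows from criticality, since replacing it by any larger constant would destroy the existence of a positive supersolution.
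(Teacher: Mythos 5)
Your proposal is correct and lands on the same underlying objects as the paper, but it packages the argument differently, and the comparison is worth making explicit. The paper does not guess the ground state directly: it first exhibits \emph{two} independent positive solutions $u_0=\psi^{\frac{2-N}{2}}$ and $u_1=u_0/\Theta$ of the intermediate equation $-\Delta_{M_\psi}u-\tfrac{(N-2)^2}{4\psi^2}u-U_\psi u=0$ (the second obtained by reduction of order, $v''+\tfrac{\psi'}{\psi}v'=0$), and then invokes the construction of \cite{pinch}, which produces the optimal weight $W=\tfrac14\bigl|\nabla\log(u_1/u_0)\bigr|^2=\tfrac{\Theta^2}{4\psi^2}$ and delivers criticality of the resulting operator on $M_\psi$ as part of the cited theorem. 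Your candidate $\varphi=\psi^{-\frac{N-2}{2}}\Theta^{-1/2}$ is exactly $\sqrt{u_0u_1}$, i.e.\ the ground state that the \cite{pinch} machinery attaches to this pair, so your Laplacian identity does check out; the difference is that you must verify it by a bare-hands computation and then re-derive criticality via a null-sequence/log-cutoff argument at both ends, which is precisely the content you would otherwise get for free from \cite{pinch} (and which the paper's route makes essentially automatic, exactly as in the proof of Theorem~\ref{improved-hardy_different}). Two small points to tighten: the sign is $(\Theta^{-1})'(r)=-1/\psi(r)$, not $+1/\psi(r)$ (harmless here since only squares enter the weight); and the sign condition that actually drives the Hessian comparison step is not $U_\psi\ge0$ but $\varphi'\le0$, which holds because both factors $\psi^{-\frac{N-2}{2}}$ and $\Theta^{-1/2}$ are nonincreasing --- this is the monotonicity the paper records when it notes that both solutions $u_0$ and $u_0/\Theta$ are decreasing, and it is what converts the solution on $M_\psi$ into a supersolution on $M$ under \eqref{curv}.
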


\begin{rem}
	
	\begin{itemize}
\item The quantities appearing in the potential $U_\psi$ defined in \eqref{potential general} have a clear geometrical meaning: in fact,
\[
  K_{\pi,r}^{rad} = - \frac{\psi^{\prime \prime}}{\psi}\quad \text{and}  \quad H_{\pi,r}^{tan} = - \frac{(\psi^{\prime})^2 - 1}{\psi^{2}}\,,
 \]
where $K_{\pi,r}^{rad}$ (resp. $H_{\pi,r}^{tan}$) denotes sectional curvature relative to planes containing (resp. orthogonal to) the radial direction in the Riemannian model associated to $\psi$. Clearly, $U_\psi$ is nonnegative given the assumed sign condition on the curvature.
\item Theorem \ref{improved-hardy_general} shows that the results of Theorem \ref{improved-hardy_different} also hold when the radial sectional curvature satisfies $K_R\le -1$ anywhere. Of course, a variant of Theorem \ref{improved-hardy_different} can be stated by applying Theorem \ref{improved-hardy_general} when $K_R\le -c<0$ proceeding as in the proof of Corollary \ref{general manifold cor}.
\item The results of Theorem \ref{improved-hardy_general} do \it not \rm hold on general Cartan-Hadamard manifolds, because of the request that $1/\psi$ is integrable at infinity. In fact, this request amounts qualitatively to requiring that curvature is negative enough at infinity. In particular, the required condition does not hold on ${\mathbb R}^N$. In fact, it can be shown by constructing explicitly an appropriate $\psi$ (see \cite{GMV}), that for example, it is enough  that $K_R$ satisfies an upper curvature bound outside a ball in terms of the quantity $-c/r^2$, where $c>0$.
\end{itemize}
\end{rem}

%%%%%%%%%%%%%%%%%%%%%%%%%%%%%%%%%%%%%%%%%%%%%%%%%%%%%%%%%%%%%%%%%%%%%%%%%%%%%%%%%%%%%%
%%%%%%%%%%%%%%%%%%%%%%%%%%%%%%%%%%%%%%%%%%%%%%%%%%%%%%%%%%%%%%%%%%%%%%%%%%%%%%%%%%%%%%

\section{Weighted Hardy and Rellich inequality on the hyperbolic space}

This section is devoted to state some further applications of our Hardy inequality, namely the derivation of suitable improved weighted Hardy and Rellich inequalities. The statements should be compared with those contained in \cite{Kombe2}, here the novelty of the improvement lies in adding a remainder term involving the function $g(r)$ as defined in Theorem~\ref{improved-hardy}. Starting with the weighted Hardy inequality we have:

\begin{thm}\label{Rellich_lemma_1}
Assume that $N - 2 - 2\alpha > 0.$ For all $u \in C_{c}^{\infty}(\hn \setminus \{ x_0 \})$ there holds

\begin{align}\label{eq_Hardy_lemma_1}
\int_{\hn} \frac{|\nabla_{\hn} u|^2}{r^{2\alpha}} \, {\rm d}v_{\hn}  & \geq \frac{(N-2 - 2 \alpha)^2}{4} \int_{\hn} \frac{u^2}{r^{2 \alpha + 2}} \, {\rm d}v_{\hn}
+ (N-2) \int_{\hn} \frac{u^2}{r^{2\alpha}} \, {\rm d}v_{\hn} \notag \\
 & +  \left( \frac{(N-2)(N-3)}{2}  - (N-1) \alpha \right) \int_{\hn} \frac{g(r)}{r^{2\alpha}} u^2 \, {\rm d}v_{\hn},
\end{align}
where $g(r)$ is as defined in \eqref{gdef}. Moreover, the constant $\frac{(N-2 - 2\alpha)^2}{4} $ is sharp in the obvious sense.
\end{thm}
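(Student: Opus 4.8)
The plan is to deduce the weighted inequality from the optimal unweighted Hardy inequality of Corollary~\ref{improved-hardy_1} through a ground-state type change of unknown. First I would set $u=r^{\alpha}w$; since $u$ has compact support inside $\hn\setminus\{x_{0}\}$ and $r^{\pm\alpha}$ is smooth there, the map $w\mapsto r^{\alpha}w$ is a bijection of $C_{c}^{\infty}(\hn\setminus\{x_{0}\})$ onto itself. Writing $\nabla_{\hn}u=w\,\nabla_{\hn}(r^{\alpha})+r^{\alpha}\nabla_{\hn}w$, multiplying by $r^{-2\alpha}$, integrating and integrating by parts (no boundary term arises, since the relevant vector field $\alpha\,r^{-1}w^{2}\nabla_{\hn}r$ is smooth and compactly supported in $\hn\setminus\{x_{0}\}$), and using $|\nabla_{\hn}r|=1$, $\Delta_{\hn}r=(N-1)\coth r$ together with the elementary identity $r\coth r-1=r^{2}g(r)$, I expect to reach the exact identity
\[
\int_{\hn}\frac{|\nabla_{\hn}u|^{2}}{r^{2\alpha}}\,\dvh=\int_{\hn}|\nabla_{\hn}w|^{2}\,\dvh-\alpha(N-2-\alpha)\int_{\hn}\frac{w^{2}}{r^{2}}\,\dvh-(N-1)\alpha\int_{\hn}g(r)\,w^{2}\,\dvh.
\]

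Next I would insert into this identity the sharp inequality of Corollary~\ref{improved-hardy_1} applied to $w$ (this uses $N\ge3$; when $N=2$ the $r^{-2}$- and $L^{2}$-terms degenerate and the inequality follows already from $\int_{\hn}|\nabla_{\hn}w|^{2}\,\dvh\ge0$), and then rewrite the outcome in terms of $u$ via $w^{2}=r^{-2\alpha}u^{2}$, i.e. $\int w^{2}/r^{2}=\int u^{2}/r^{2\alpha+2}$, $\int w^{2}=\int u^{2}/r^{2\alpha}$ and $\int g(r)w^{2}=\int g(r)u^{2}/r^{2\alpha}$. Collecting coefficients, the $L^{2}$-type constant stays equal to $N-2$, the constant of the $g$-term becomes $\tfrac{(N-2)(N-3)}{2}-(N-1)\alpha$ exactly as claimed, and the Hardy constant equals $\big(\tfrac{N-2}{2}\big)^{2}-\alpha(N-2-\alpha)=\tfrac{(N-2-2\alpha)^{2}}{4}$, which is the only algebraic identity worth checking. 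The hypothesis $N-2-2\alpha>0$ is used only to guarantee that this leading constant is positive, so that \eqref{eq_Hardy_lemma_1} is a genuine Hardy inequality.

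For the sharpness of $\tfrac{(N-2-2\alpha)^{2}}{4}$, the identity above, combined with $\alpha(N-2-\alpha)+\tfrac{(N-2-2\alpha)^{2}}{4}=\big(\tfrac{N-2}{2}\big)^{2}$ and $(N-1)\alpha+\big(\tfrac{(N-2)(N-3)}{2}-(N-1)\alpha\big)=\tfrac{(N-2)(N-3)}{2}$, shows that for $u=r^{\alpha}w$ the quantity
\begin{multline*}
\int_{\hn}\frac{|\nabla_{\hn}u|^{2}}{r^{2\alpha}}\,\dvh-\frac{(N-2-2\alpha)^{2}}{4}\int_{\hn}\frac{u^{2}}{r^{2\alpha+2}}\,\dvh\\
-(N-2)\int_{\hn}\frac{u^{2}}{r^{2\alpha}}\,\dvh-\Big(\tfrac{(N-2)(N-3)}{2}-(N-1)\alpha\Big)\int_{\hn}\frac{g(r)\,u^{2}}{r^{2\alpha}}\,\dvh
\end{multline*}
coincides exactly with the deficit in \eqref{improved-hardy-eq} written for $w$. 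Hence, if $\tfrac{(N-2-2\alpha)^{2}}{4}$ could be replaced by a strictly larger constant $c$, subtracting $\big(c-\tfrac{(N-2-2\alpha)^{2}}{4}\big)\int_{\hn}w^{2}/r^{2}\,\dvh$ from both sides would yield \eqref{improved-hardy-eq} with the strictly larger Hardy constant $\big(\tfrac{N-2}{2}\big)^{2}+c-\tfrac{(N-2-2\alpha)^{2}}{4}$, contradicting the sharpness of $\big(\tfrac{N-2}{2}\big)^{2}$ asserted in Corollary~\ref{improved-hardy_1}. Alternatively, sharpness follows by testing with functions $r^{-(N-2-2\alpha)/2+\varepsilon}$ times a cutoff supported in a small geodesic ball about $x_{0}$, where the metric is asymptotically Euclidean and the two lower-order terms, carrying the less singular weight $r^{-2\alpha}$, are negligible compared with the $r^{-2\alpha-2}$-term.

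I do not expect a genuine obstacle here: once the substitution is performed the argument is essentially bookkeeping. The points deserving a little care are the vanishing of the boundary terms in the integration by parts — guaranteed because the support of $u$, and hence of $w$, is a compact subset of $\hn\setminus\{x_{0}\}$ on which $r^{\pm\alpha}$ is smooth — and the derivation of the displayed identity, which rests on the standard facts $|\nabla_{\hn}r|=1$ and $\Delta_{\hn}r=(N-1)\coth r$ and on the rewriting $r^{-2\alpha-1}\coth r=r^{-2\alpha-2}+g(r)\,r^{-2\alpha}$.
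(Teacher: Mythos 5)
Your proof is correct and follows essentially the same route as the paper's: the paper likewise writes $u/r^{\alpha}=\Psi v$ with $\Psi=\Psi_{N-2}$ (the supersolution behind Corollary~\ref{improved-hardy_1}), integrates by parts the cross term $2\alpha\, u\,u_r/r^{2\alpha+1}$, and lands on exactly your coefficients $\alpha^{2}-(N-2)\alpha$ and $-(N-1)\alpha$; you merely package the unweighted step as an invocation of Corollary~\ref{improved-hardy_1} instead of redoing the ground-state computation inline. For sharpness the paper uses precisely the cut-off test functions you offer as your second alternative.
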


 \begin{rem}
We note that the coefficient in front of the last term in \eqref{eq_Hardy_lemma_1} is positive provided that $ \alpha \leq \frac{(N-2)(N-3)}{2(N-1)}$. Nevertheless, for $ \alpha > \frac{(N-2)(N-3)}{2(N-1)}$, by recalling that $g(r)\leq \frac{1}{3}$ for every $r>0$, we infer that

 $$(N-2)\int_{\hn} \frac{u^2}{r^{2\alpha}} \, {\rm d}v_{\hn} +\left( \frac{(N-2)(N-3)}{2}  - (N-1) \alpha \right) \int_{\hn} \frac{g(r)}{r^{2\alpha}} u^2 \, {\rm d}v_{\hn}$$
 $$\geq \left(\frac{(N-2)(N+3)}{6} -\frac{(N-1)\alpha}{3}\right) \int_{\hn} \frac{u^2}{r^{2\alpha}} \, {\rm d}v_{\hn}>\frac{2(N-2)}{3}\,\int_{\hn} \frac{u^2}{r^{2\alpha}} \, {\rm d}v_{\hn} \,,$$
 for $N - 2 - 2\alpha > 0.$ Hence, inequality \eqref{eq_Hardy_lemma_1} still gives an improvement of the weighted Hardy inequality. Also see (\cite[Theorem~4.2]{Yang}).
 \end{rem}

Next we state a weighted Rellich inequality
\begin{thm}\label{Rellich_lemma_2}
Let  $0 < \alpha < \frac{N-2}{2}.$ For all $u \in C_{c}^{\infty}(\hn \setminus \{ x_0 \})$ there holds:
 \begin{align}\label{eq_Rellich_lemma_2}
 \int_{\hn} \!\!\! \frac{|\Delta_{\hn}u|^2}{{r^{2 \alpha - 2}} }{\rm d}v_{\hn}& \!\geq\! \frac{(N-2 - 2 \alpha)^2 (N - 2 + 2 \alpha)^2}{16} \int_{\hn} \!\frac{u^2}{r^{2\alpha + 2}}  {\rm d}v_{\hn} \notag \\
 & \!+\! \frac{(N - 2 - 2 \alpha)(N - 2 + 2 \alpha)(N-2)}{2} \int_{\hn} \frac{u^2}{r^{2 \alpha}} \, {\rm d}v_{\hn} \notag \\
 & \!+\!\frac{(N \!-\! 2\! -\! 2\alpha)(N \!-\!2 \!+ \!2 \alpha)}{2} \!\left( \!\frac{(N\!-\!2)(N\!-\!3)}{2} \! -\! (N\!-\!1) \alpha\!\right)\! \!\int_{\hn}\!\! \frac{g(r)}{r^{2\alpha}} u^2  {\rm d}v_{\hn},
\end{align}
where $g(r)$ is defined in \eqref{gdef}. Moreover, the constant $\frac{(N-2 - 2 \alpha)^2 (N - 2 + 2 \alpha)^2}{16}$ is sharp in the obvious sense.
\end{thm}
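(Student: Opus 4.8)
The plan is to derive the weighted Rellich inequality \eqref{eq_Rellich_lemma_2} from the weighted Hardy inequality of Theorem~\ref{Rellich_lemma_1}, via the classical \lq\lq Rellich from Hardy'' scheme: bound the Rellich quotient below by the square of a first--order quantity through Cauchy--Schwarz, rewrite that quantity by integration by parts, and then invoke Theorem~\ref{Rellich_lemma_1}. Throughout put $Q:=\int_{\hn}\frac{u^{2}}{r^{2\alpha+2}}\,{\rm d}v_{\hn}$, $R:=\int_{\hn}\frac{u^{2}}{r^{2\alpha}}\,{\rm d}v_{\hn}$ and $S:=\int_{\hn}\frac{g(r)\,u^{2}}{r^{2\alpha}}\,{\rm d}v_{\hn}$; for $u\in C_{c}^{\infty}(\hn\setminus\{x_{0}\})$ these are finite and, as $g>0$, nonnegative. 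Note also that $0<\alpha<\frac{N-2}{2}$ forces $N\ge3$ and $N-2\pm2\alpha>0$.

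First I would apply the Cauchy--Schwarz inequality to the pair $\frac{|u|}{r^{\alpha+1}}$, $\frac{|\Delta_{\hn}u|}{r^{\alpha-1}}$, which gives
\[
\Bigl(\int_{\hn}\frac{(-\Delta_{\hn}u)\,u}{r^{2\alpha}}\,{\rm d}v_{\hn}\Bigr)^{2}\le\Bigl(\int_{\hn}\frac{|\Delta_{\hn}u|\,|u|}{r^{2\alpha}}\,{\rm d}v_{\hn}\Bigr)^{2}\le Q\int_{\hn}\frac{|\Delta_{\hn}u|^{2}}{r^{2\alpha-2}}\,{\rm d}v_{\hn}\,,
\]
the first passage being legitimate once the nonnegativity of $\int_{\hn}(-\Delta_{\hn}u)\,u\,r^{-2\alpha}\,{\rm d}v_{\hn}$ is established below. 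Next, two integrations by parts (no boundary terms since $u$ is supported away from $x_{0}$) yield
\[
\int_{\hn}\frac{(-\Delta_{\hn}u)\,u}{r^{2\alpha}}\,{\rm d}v_{\hn}=\int_{\hn}\frac{|\nabla_{\hn}u|^{2}}{r^{2\alpha}}\,{\rm d}v_{\hn}-\frac12\int_{\hn}u^{2}\,\Delta_{\hn}(r^{-2\alpha})\,{\rm d}v_{\hn}\,,
\]
and, using $\Delta_{\hn}f(r)=f''(r)+(N-1)\coth r\,f'(r)$ together with the identity $r\coth r=1+r^{2}g(r)$, one computes $\Delta_{\hn}(r^{-2\alpha})=-2\alpha(N-2-2\alpha)\,r^{-2\alpha-2}-2\alpha(N-1)\,g(r)\,r^{-2\alpha}$, whence
\[
\int_{\hn}\frac{(-\Delta_{\hn}u)\,u}{r^{2\alpha}}\,{\rm d}v_{\hn}=\int_{\hn}\frac{|\nabla_{\hn}u|^{2}}{r^{2\alpha}}\,{\rm d}v_{\hn}+\alpha(N-2-2\alpha)\,Q+\alpha(N-1)\,S\,.
\]

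Then I would bound $\int_{\hn}|\nabla_{\hn}u|^{2}r^{-2\alpha}\,{\rm d}v_{\hn}$ from below by \eqref{eq_Hardy_lemma_1}; after the simplifications $\frac{(N-2-2\alpha)^{2}}{4}+\alpha(N-2-2\alpha)=\frac{(N-2-2\alpha)(N-2+2\alpha)}{4}=:a$ and $\bigl(\frac{(N-2)(N-3)}{2}-(N-1)\alpha\bigr)+\alpha(N-1)=\frac{(N-2)(N-3)}{2}=:c$ (and with $b:=N-2$) this gives $\int_{\hn}(-\Delta_{\hn}u)\,u\,r^{-2\alpha}\,{\rm d}v_{\hn}\ge aQ+bR+cS$. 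Since $a,b>0$ and $c\ge0$, the right--hand side is nonnegative, which validates the Cauchy--Schwarz step; combining with it and expanding the square,
\[
\int_{\hn}\frac{|\Delta_{\hn}u|^{2}}{r^{2\alpha-2}}\,{\rm d}v_{\hn}\ge\frac{(aQ+bR+cS)^{2}}{Q}\ge a^{2}Q+2ab\,R+2ac\,S\,,
\]
where the last step discards the nonnegative term $(bR+cS)^{2}/Q$. Finally one checks $a^{2}=\frac{(N-2-2\alpha)^{2}(N-2+2\alpha)^{2}}{16}$, $2ab=\frac{(N-2-2\alpha)(N-2+2\alpha)(N-2)}{2}$, and $2ac=\frac{(N-2-2\alpha)(N-2+2\alpha)(N-2)(N-3)}{4}\ge\frac{(N-2-2\alpha)(N-2+2\alpha)}{2}\bigl(\frac{(N-2)(N-3)}{2}-(N-1)\alpha\bigr)$ (as $\alpha>0$), which is exactly \eqref{eq_Rellich_lemma_2}, in fact slightly sharpened in the $g$--term.

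For the optimality of $\frac{(N-2-2\alpha)^{2}(N-2+2\alpha)^{2}}{16}$ I would localize at the pole: for functions concentrating at $x_{0}$ the metric of $\hn$ is asymptotically Euclidean, $g(r)\to\frac13$, and $R,S$ are of lower order than $Q$, so the best constant in front of $Q$ equals the sharp constant of the Euclidean weighted Rellich inequality $\int_{\rn}|x|^{2-2\alpha}|\Delta u|^{2}\,{\rm d}x\ge\frac{(N-2-2\alpha)^{2}(N-2+2\alpha)^{2}}{16}\int_{\rn}\frac{u^{2}}{|x|^{2\alpha+2}}\,{\rm d}x$ — which is itself obtained by running the above two steps in $\rn$ (where no $g$--term occurs) with the sharp Euclidean weighted Hardy constant, and is attained in the limit by truncated rescaled powers $|x|^{-(N-2-2\alpha)/2}$; transplanting such test functions to $\hn$ shows the constant cannot be raised. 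Overall, once Theorem~\ref{Rellich_lemma_1} is granted the computation is mostly bookkeeping; the delicate points are the evaluation of $\Delta_{\hn}(r^{-2\alpha})$ — where the identity $r\coth r=1+r^{2}g(r)$ is exactly what produces the $g$--term with the correct coefficient — and the sign checks $a,b>0$, $c\ge0$ that make every Cauchy--Schwarz and squaring step legitimate, while the only genuinely independent ingredient is the sharpness argument via reduction to the Euclidean model at the pole.
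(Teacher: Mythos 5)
Your proposal is correct and follows essentially the same route as the paper: both derive the Rellich bound from the weighted Hardy inequality of Theorem~\ref{Rellich_lemma_1} via the identity for $\int_{\hn}(-\Delta_{\hn}u)\,u\,r^{-2\alpha}\,{\rm d}v_{\hn}$ obtained from $\Delta_{\hn}(r^{-2\alpha})$, followed by an optimized quadratic step (your Cauchy--Schwarz-and-expand is equivalent to the paper's Young inequality with the optimal $\varepsilon=\frac{(N-2-2\alpha)(N-2+2\alpha)}{8}$). The only difference is that you retain the positive term $\alpha(N-1)\int_{\hn}g(r)u^{2}r^{-2\alpha}\,{\rm d}v_{\hn}$ that the paper discards when bounding $-\Delta_{\hn}(r^{-2\alpha})$, which yields a marginally larger coefficient on the $g$-term and hence implies the stated inequality.
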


 \par

 Taking $\alpha=1$ in \eqref{eq_Rellich_lemma_2}, one has the following improved Rellich inequality:

 \begin{cor}\label{Rellich_hyperbolic}
 Let $N>4.$ For all $u \in C^{\infty}_{c}(\hn \setminus \{ x_0 \})$ there holds:
 \begin{align}\label{eq_Rellich}
 \int_{\hn} (\Delta_{\hn}u)^2 \, {\rm d}v_{\hn} & \geq \frac{N^2 (N-4)^2}{16} \int_{\hn} \frac{u^2}{r^4} \, {\rm d}v_{\hn} + \frac{N(N-2)(N-4)}{2} \int_{\hn} \frac{u^2}{r^2} \, {\rm d}v_{\hn} \notag \\
&  + \frac{ N(N-4)(N^2 - 7N + 8)}{4} \int_{\hn} g(r)\,\frac{u^2}{r^2} \, {\rm d}v_{\hn},
  \end{align}
  where $g(r)$ is defined in \eqref{gdef}.
 Moreover, the constant $\frac{N^2(N -4)^2}{16}$ is sharp in the obvious sense.

 \end{cor}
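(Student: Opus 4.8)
The plan is to obtain \eqref{eq_Rellich} simply by specializing the weighted Rellich inequality of Theorem~\ref{Rellich_lemma_2} to the exponent $\alpha=1$. First I would check that the hypothesis $0<\alpha<\frac{N-2}{2}$ reduces, for $\alpha=1$, precisely to $N>4$, which is exactly the assumption of Corollary~\ref{Rellich_hyperbolic}; and that with $\alpha=1$ the weight $r^{2\alpha-2}$ appearing on the left-hand side of \eqref{eq_Rellich_lemma_2} is identically $1$, so that the energy term becomes $\int_{\hn}(\Delta_{\hn}u)^2\,{\rm d}v_{\hn}$ as in \eqref{eq_Rellich}. All of the analytic content — the integration by parts together with the Cauchy--Schwarz step that produces the Rellich bound from the weighted Hardy inequality of Theorem~\ref{Rellich_lemma_1} — is already packaged in Theorem~\ref{Rellich_lemma_2}, so nothing beyond a substitution is required.

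Next I would carry out the (routine) algebraic simplification of the three coefficients on the right-hand side of \eqref{eq_Rellich_lemma_2} at $\alpha=1$. Since $N-2-2\alpha=N-4$ and $N-2+2\alpha=N$, the leading Rellich coefficient $\frac{(N-2-2\alpha)^2(N-2+2\alpha)^2}{16}$ becomes $\frac{N^2(N-4)^2}{16}$ in front of $\int_{\hn}u^2/r^4\,{\rm d}v_{\hn}$, and the second coefficient $\frac{(N-2-2\alpha)(N-2+2\alpha)(N-2)}{2}$ becomes $\frac{N(N-2)(N-4)}{2}$ in front of $\int_{\hn}u^2/r^2\,{\rm d}v_{\hn}$. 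The only simplification worth recording is in the third term: the factor $\frac{(N-2)(N-3)}{2}-(N-1)\alpha$ evaluates at $\alpha=1$ to $\frac{(N-2)(N-3)-2(N-1)}{2}=\frac{N^2-7N+8}{2}$, which, multiplied by $\frac{(N-2-2\alpha)(N-2+2\alpha)}{2}=\frac{N(N-4)}{2}$, gives the coefficient $\frac{N(N-4)(N^2-7N+8)}{4}$ in front of $\int_{\hn}g(r)\,u^2/r^2\,{\rm d}v_{\hn}$, with $g$ the function of \eqref{gdef} already appearing in Theorem~\ref{Rellich_lemma_2}. This is precisely \eqref{eq_Rellich}. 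The sharpness of $\frac{N^2(N-4)^2}{16}$ is then inherited directly from the sharpness of $\frac{(N-2-2\alpha)^2(N-2+2\alpha)^2}{16}$ asserted in Theorem~\ref{Rellich_lemma_2}, the two expressions coinciding at $\alpha=1$; no separate minimizing sequence needs to be constructed.

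In this sense there is essentially no obstacle: the proof is a verification, and the only points requiring a little care are the bookkeeping of the simplification and the consistency of the sign conditions. In particular one should note that $N^2-7N+8$ is negative at $N=5$ and positive for $N\geq6$, so the last term of \eqref{eq_Rellich} need not be nonnegative; this does not affect the validity of the inequality, and — reasoning as in the remark following Theorem~\ref{Rellich_lemma_1}, using $g(r)\leq\frac13$ — the sum of the last two terms stays positive, so that \eqref{eq_Rellich} still constitutes a genuine improvement of the classical weighted Rellich inequality.
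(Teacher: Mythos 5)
Your proposal is correct and is exactly the paper's own argument: Corollary~\ref{Rellich_hyperbolic} is obtained by setting $\alpha=1$ in Theorem~\ref{Rellich_lemma_2}, under which $0<\alpha<\frac{N-2}{2}$ becomes $N>4$, the weight $r^{2\alpha-2}$ becomes $1$, and the three coefficients simplify as you computed, with sharpness inherited from the sharpness statement of Theorem~\ref{Rellich_lemma_2}. Your additional observation about the sign of $N^2-7N+8$ for $N=5$ and the use of $g(r)\leq\frac{1}{3}$ is a correct (and worthwhile) remark not made explicitly in the paper, but it is not needed for the proof.
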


%%%%%%%%%%%%%%%%%%%%%%%%%%%%%%%%%%%%%%%%%%%%%%%%%%%%%%%%%%%%%%%%%%%%%%%%%%%%%%%%%%%%
%%%%%%%%%%%%%%%%%%%%%%%%%%%%%%%%%%%%%%%%%%%%%%%%%%%%%%%%%%%%%%%%%%%%%%%%%%%%%%%%%%%%

\section{Proof of Theorems~\ref{improved-hardy} and \ref{improved-hardy_different}}\label{proof_thm}

We begin the proof by establishing the following lemma.

\begin{lem}\label{main-lemma}
Let $N\geq 2$ and let $\Psi(r) := r^{\alpha}(\sinh r)^{\beta},$ where $\alpha$ and $\beta$ are real parameters. Then $\Psi$ satisfies the following equation
\begin{align}\label{main-lemma-eq_1}
-\Delta_{\hn} \Psi  & = - \alpha(\alpha -1) \frac{\Psi}{r^2} - (2 \alpha \beta + (N-1) \alpha) \frac{\coth r}{r} \Psi - \beta( \beta + N -2 ) \frac{\Psi}{\sinh^{2} r} \notag \\
& - (\beta^2 + (N-1) \beta) \Psi \quad \text{in } \hn \setminus\{x_0\}\,.
\end{align}
Moreover if we assume that $\alpha = -\left( \beta + \frac{N-2}{2} \right),$ then \eqref{main-lemma-eq_1} yields
\begin{align}\label{main-lemma-implication}
-\Delta_{\hn} \Psi  & =  A(\beta) \frac{\Psi}{r^2} + B(\beta) \frac{\coth r}{r} \Psi + C(\beta) \frac{\Psi}{\sinh^2 r} + D(\beta) \Psi \quad \text{in } \hn \setminus\{x_0\}\,,
\end{align}
where $A(\beta) = -\left( \beta + [(N-2)/2] \right) \left( \beta + (N/2) \right),$ $B(\beta) = \left( \beta + [(N-2)/2] \right) \left( 2 \beta + N-1 \right)$, $C(\beta) = -\beta \left( \beta + N-2 \right)$
 and $D(\beta) = - \beta \left( \beta + N -1 \right).$ In particular, since
 $$
 A(\beta) + B(\beta) + C(\beta) = \left(\frac{N-2}{2} \right)^2\,,
 $$
 \eqref{main-lemma-implication} yields
$$
-\Delta_{\hn} \Psi \sim \left(\frac{N-2}{2} \right)^2\, \frac{\Psi}{r^2} + D(\beta) \Psi \quad  \text{ as } r \rightarrow 0^+.
$$
\end{lem}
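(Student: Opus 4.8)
The plan is to prove the lemma by a direct computation of the hyperbolic Laplacian on radial functions, specialize the two parameters, and then simplify the resulting coefficients algebraically. Recall that for a radial function $f(r)$ on $\hn$ one has
\[
\Delta_{\hn} f = f''(r) + (N-1)\coth r \, f'(r),
\]
since the volume element of $\hn$ in geodesic polar coordinates is $(\sinh r)^{N-1}\,{\rm d}r\,{\rm d}\omega$. So the first step is to apply this formula to $\Psi(r)=r^{\alpha}(\sinh r)^{\beta}$. It is convenient to write $\Psi = e^{\phi}$ with $\phi(r) = \alpha\log r + \beta\log\sinh r$, so that $\Psi' = \phi'\Psi$ and $\Psi'' = (\phi'' + (\phi')^2)\Psi$, where $\phi'(r) = \alpha/r + \beta\coth r$ and $\phi''(r) = -\alpha/r^2 - \beta/\sinh^2 r$. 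Expanding
\[
-\Delta_{\hn}\Psi = -\bigl(\phi'' + (\phi')^2 + (N-1)\coth r\,\phi'\bigr)\Psi
\]
and grouping terms by the functions $1/r^2$, $(\coth r)/r$, $1/\sinh^2 r$, and the constant term (using $\coth^2 r = 1 + 1/\sinh^2 r$) yields exactly \eqref{main-lemma-eq_1}, with coefficients $-\alpha(\alpha-1)$, $-(2\alpha\beta+(N-1)\alpha)$, $-\beta(\beta+N-2)$, and $-(\beta^2+(N-1)\beta)$ respectively.

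Next I would substitute $\alpha = -\bigl(\beta + \tfrac{N-2}{2}\bigr)$ into each of the four coefficients. For the $1/r^2$ coefficient, $-\alpha(\alpha-1) = -\alpha^2 + \alpha$; writing $\alpha = -(\beta + \tfrac{N-2}{2})$ gives $\alpha - 1 = -(\beta + \tfrac N2)$, so $-\alpha(\alpha-1) = -\bigl(\beta+\tfrac{N-2}{2}\bigr)\bigl(\beta+\tfrac N2\bigr) = A(\beta)$. For the $(\coth r)/r$ coefficient, $-(2\alpha\beta + (N-1)\alpha) = -\alpha(2\beta + N-1) = \bigl(\beta + \tfrac{N-2}{2}\bigr)(2\beta + N-1) = B(\beta)$. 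The remaining two coefficients $C(\beta) = -\beta(\beta+N-2)$ and $D(\beta) = -\beta(\beta+N-1)$ do not involve $\alpha$ and are just renamed. This gives \eqref{main-lemma-implication}.

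For the identity $A(\beta)+B(\beta)+C(\beta) = \bigl(\tfrac{N-2}{2}\bigr)^2$, I would simply add the three quadratics in $\beta$: the $\beta^2$ terms are $-1 + 2 - 1 = 0$, so the sum is linear (in fact constant) in $\beta$; collecting the $\beta$-coefficients, $-\bigl((N-2)/2 + N/2\bigr) + \bigl(N-1 + (N-2)\bigr) - (N-2) = -(N-1) + (N-1) = 0$; and the constant term is $-\tfrac{(N-2)}{2}\cdot\tfrac N2 + \tfrac{N-2}{2}\cdot(N-1) = \tfrac{N-2}{2}\bigl(N-1-\tfrac N2\bigr) = \tfrac{N-2}{2}\cdot\tfrac{N-2}{2} = \bigl(\tfrac{N-2}{2}\bigr)^2$. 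Finally, for the asymptotic statement, since near $r=0$ one has $(\coth r)/r = 1/r^2 + O(1)$ and $1/\sinh^2 r = 1/r^2 + O(1)$, the three singular terms combine to $(A+B+C)\,\Psi/r^2 + O(1)\cdot\Psi = \bigl(\tfrac{N-2}{2}\bigr)^2\Psi/r^2 + O(\Psi)$, while $D(\beta)\Psi$ is bounded near the origin, giving $-\Delta_{\hn}\Psi \sim \bigl(\tfrac{N-2}{2}\bigr)^2\Psi/r^2 + D(\beta)\Psi$ as $r\to 0^+$. There is no real obstacle here: the only thing to be careful about is the bookkeeping when using $\coth^2 r = 1 + \operatorname{csch}^2 r$ to separate the constant term from the $1/\sinh^2 r$ term in the very first step, and keeping track of signs when substituting the expression for $\alpha$.
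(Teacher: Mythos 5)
Your proposal is correct and follows essentially the same route as the paper: both apply the radial form $\Delta_{\hn}f=f''+(N-1)\coth r\, f'$ to $\Psi$, use $\coth^2 r=1+1/\sinh^2 r$ to separate the constant from the $1/\sinh^2 r$ term, and then substitute $\alpha=-(\beta+\tfrac{N-2}{2})$; your use of the logarithmic derivative $\Psi=e^{\phi}$ is only a cosmetic reorganization of the paper's direct computation of $\Psi'$ and $\Psi''$. All the coefficient bookkeeping and the final algebraic identity check out.
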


\begin{proof}
The expression of hyperbolic Laplacian in radial coordinates, enables us to write

$$
- \Delta_{\hn} \Psi = - \Psi^{\prime \prime}(r) - (N-1) \coth r\, \Psi^{\prime}(r)  \quad \text{in } \hn \setminus\{x_0\}\,.
$$
Since, for $r>0$,
$$
\Psi^{\prime}(r) = \alpha \frac{\Psi(r)}{r} + \beta \coth r \Psi(r),
$$
and
$$
\Psi^{\prime \prime}(r) = \alpha(\alpha -1) \frac{\Psi(r)}{r^2} + 2 \alpha \beta \frac{\coth r}{r} \Psi(r)
+ \beta(\beta - 1) \frac{\Psi(r)}{\sinh^2 r} + \beta^2 \Psi(r)\, ,
$$
 we obtain

\begin{align*}
- \Delta_{\hn} \Psi & = - \left[ \alpha (\alpha -1) \frac{\Psi(r)}{r^2} +
 2 \alpha \beta \frac{\coth r}{r} \Psi(r) + \beta(\beta - 1) \frac{\Psi(r)}{\sinh^2 r} \right. \\
& \left. + (N-1) \alpha \frac{\coth r}{r} \Psi(r) + (N-1) \beta \Psi(r) + (N-1) \beta \frac{\Psi(r)}{\sinh^2 r}  + \beta^2 \Psi(r) \right]
\end{align*}
in $\hn \setminus\{x_0\}$.
Now, rearranging the above terms, the proof of \eqref{main-lemma-eq_1}
and \eqref{main-lemma-implication} follows directly by
substituting the value of $\alpha$ in \eqref{main-lemma-eq_1}.
\end{proof}
%\medskip
%%%%%%%%%%%%%%%%%%%%%%%%%%%%%%%%%%%%%%%%%%%%%%%%%%%%%%%%%%%%%%%%%%%%%%%%%%%%%

An application of Lemma \ref{main-lemma} yields
\begin{lem}\label{main-lemma2}
Let $N \geq 2$. For all $ \lambda \leq  \lambda_{1}(\hn)=\left(\frac{N-1}{2} \right)^2$ and $r>0$, set

$$
\Psi_{\lambda}(r) := r^{-\frac{N-2}{2}} \left(\frac{\sinh r}{r} \right)^{-\frac{N-1+\gamma_N(\lambda)}{2}}\,,
$$
where $\gamma_{N}(\lambda):=\sqrt{(N-1)^2-4\lambda}$. Then $\Psi_{\lambda}$ satisfies the following equation

\begin{align}\label{main-lemma-eq}
-\Delta_{\hn} \Psi_{\lambda} -\lambda \Psi_{\lambda}  =V_{\lambda}(r)  \Psi_{\lambda}\quad \text{in }\hn\setminus\{0\}\,,
\end{align}
with $V_{\lambda}(r)$ as given in \eqref{potential}.
\end{lem}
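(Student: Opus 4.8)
The plan is to obtain Lemma~\ref{main-lemma2} as a direct specialization of Lemma~\ref{main-lemma}. First I would put $\Psi_{\lambda}$ into the monomial form $r^{\alpha}(\sinh r)^{\beta}$ used there. Writing $\left(\tfrac{\sinh r}{r}\right)^{-\frac{N-1+\gamma_N(\lambda)}{2}} = (\sinh r)^{-\frac{N-1+\gamma_N(\lambda)}{2}}\, r^{\frac{N-1+\gamma_N(\lambda)}{2}}$ one gets
\[
\Psi_{\lambda}(r) = r^{\frac{1+\gamma_N(\lambda)}{2}}\,(\sinh r)^{-\frac{N-1+\gamma_N(\lambda)}{2}},
\]
so $\beta=-\frac{N-1+\gamma_N(\lambda)}{2}$ and $\alpha=\frac{1+\gamma_N(\lambda)}{2}$. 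The key observation is that this pair satisfies the relation $\alpha=-(\beta+\frac{N-2}{2})$ required in order to invoke the reduced identity \eqref{main-lemma-implication}, since $\beta+\frac{N-2}{2}=\frac{(N-2)-(N-1+\gamma_N(\lambda))}{2}=-\frac{1+\gamma_N(\lambda)}{2}$.

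Next I would substitute $\beta=-\frac{N-1+\gamma_N(\lambda)}{2}$ into the coefficients $A(\beta),B(\beta),C(\beta),D(\beta)$ of Lemma~\ref{main-lemma}, using throughout the identity $\gamma_N(\lambda)^2=(N-1)^2-4\lambda$. The elementary computations $\beta+\frac{N}{2}=\frac{1-\gamma_N(\lambda)}{2}$, $2\beta+N-1=-\gamma_N(\lambda)$, $\beta+N-2=\frac{N-3-\gamma_N(\lambda)}{2}$ and $\beta+N-1=\frac{N-1-\gamma_N(\lambda)}{2}$ give
\[
D(\beta)=-\beta(\beta+N-1)=\frac{(N-1)^2-\gamma_N(\lambda)^2}{4}=\lambda,
\]
so the zeroth-order term produced by \eqref{main-lemma-implication} exactly absorbs the $-\lambda\Psi_{\lambda}$ subtracted in \eqref{main-lemma-eq}. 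For the remaining terms I would split the mixed weight via the trivial identity $\frac{\coth r}{r}=\frac{1}{r^2}+g(r)$, which is immediate from \eqref{gdef}; the total coefficient of $\frac{\Psi_{\lambda}}{r^2}$ is then $A(\beta)+B(\beta)=(\beta+\frac{N-2}{2})^2=\frac{(\gamma_N(\lambda)+1)^2}{4}$, the coefficient of $g(r)\Psi_{\lambda}$ is $B(\beta)=(\beta+\frac{N-2}{2})(2\beta+N-1)=\frac{\gamma_N(\lambda)(\gamma_N(\lambda)+1)}{2}$, and the coefficient of $\frac{\Psi_{\lambda}}{\sinh^2 r}$ is $C(\beta)=-\beta(\beta+N-2)=\frac{(N-1+\gamma_N(\lambda))(N-3-\gamma_N(\lambda))}{4}$. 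These are exactly the three coefficients appearing in \eqref{potential}, which establishes \eqref{main-lemma-eq}.

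I do not expect any genuine obstacle here: the whole argument is the bookkeeping of plugging the chosen exponent $\beta$ into Lemma~\ref{main-lemma}. The only points requiring a little care are the sign conventions in the exponents defining $\Psi_{\lambda}$, the consistent use of $\gamma_N(\lambda)^2=(N-1)^2-4\lambda$ in the simplifications, and the fact that the hypothesis $\lambda\le\lambda_1(\hn)$ is precisely what makes $\gamma_N(\lambda)$ real and nonnegative, hence $\Psi_{\lambda}$ well defined and the identity meaningful on $\hn\setminus\{x_0\}$.
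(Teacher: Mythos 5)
Your proposal is correct and follows essentially the same route as the paper: the paper likewise rewrites \eqref{main-lemma-implication} via $\frac{\coth r}{r}=\frac{1}{r^2}+g(r)$ and then substitutes $\beta=-\frac{N-1+\gamma_N(\lambda)}{2}$, with the same identifications $A(\beta)+B(\beta)=\frac{(\gamma_N(\lambda)+1)^2}{4}$, $B(\beta)=\frac{\gamma_N(\lambda)(\gamma_N(\lambda)+1)}{2}$, $C(\beta)=\frac{(N-1+\gamma_N(\lambda))(N-3-\gamma_N(\lambda))}{4}$ and $D(\beta)=\lambda$. All your coefficient computations check out.
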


\begin{proof}
Let $g(r)$ be as defined in Theorem~\ref{improved-hardy}. Then \eqref{main-lemma-implication} can be rewritten as follows

\begin{align}\label{eq-main-lemma2}
 -\Delta_{\hn} \Psi & = \left( \beta + \frac{N-2}{2} \right)^2 \frac{\Psi}{r^2} + 2 \left( \beta + \frac{N-2}{2} \right) \left( \beta + \frac{N-1}{2} \right) g(r) \Psi  \notag \\
& - \beta(N-2 + \beta) \frac{\Psi}{\sinh^2 r} - \beta(N-1 + \beta)\Psi\,.
\end{align}
Now the proof follows by substituting $\beta = -\frac{N-1+\gamma_N(\lambda)}{2} $ in \eqref{eq-main-lemma2} and denoting by $\Psi_{\lambda}$ the corresponding function.
%In particular, these choices yield the desired asymptotic behavior for $V_{\lambda}(r)$ as $r\rightarrow 0^+$, see Remark \ref{asym}.
\end{proof}

%%%%%%%%%%%%%%%%%%%%%%%%%%%%%%%%%%%%%%%%%%%%%%%%%%%%%%%%%%%%%%%%%%%%%%%%%%%%%%%%%%%%%%

\medskip

We now turn  to the criticality issue. We exploit \cite[Theorem~1.7]{pinch1} regarding a Liouville comparison principle for two nonnegative Schr\"odinger operators. For the reader convenience we quote below the theorem in the particular case where the principal part of the two operators is the Laplacian.

\begin{thm}\cite[Theorem~1.7]{pinch1}\label{pinch-thm}
Let $N \geq 1$ and $\Omega$ be a domain in $\mathbb{R}^{N}$ or any noncompact Riemannian manifold.  Consider two Schr\"odinger operators defined on $\Omega$
of the form
$$
Q_j := - \Delta  + W_j, \quad j = 0,1,
$$
such that $W_j \in L^{p}_{\loc} (\Omega; \mathbb{R})$ for some $p > \frac{N}{2}.$ \\
Assume that the following assumptions hold true:

\begin{enumerate}

\item The operator $Q_1$ is critical in $\Omega.$ Denote by $\Phi$ be its ground state.

\medskip
\item $Q_0$ is nonnegative in $\Omega$, and there exists a real function $\Psi \in H^{1}_{\loc}(\Omega)$ such that $\Psi_{+} \neq 0,$ and $Q_0 \Psi \leq 0$ in $\Omega,$
    where $u_+(x) := \max \{ 0, u(x) \}.$

\medskip

\item  The following inequality holds:

\begin{equation*}
(\Psi_+)^2(x) \leq C \Phi^2(x) \quad a.e. \ \mbox{in} \ \Omega,
\end{equation*}
where $C > 0$ is a positive constant. \\

Then the operator $Q_0$ is critical in $\Omega$ and $\Psi$ is its ground state.

\end{enumerate}
\end{thm}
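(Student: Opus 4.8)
The plan is to prove criticality of $Q_0$ by transplanting a null sequence from $Q_1$ to $Q_0$, using the comparison in assumption (3) to annihilate the Dirichlet energy. Recall the operational description of criticality from criticality theory (see \cite{pinch, pinch1}): a nonnegative Schr\"odinger operator $Q$ is critical if and only if it admits a \emph{null sequence}, i.e. a sequence $\{u_k\}\subset C_c^\infty(\Omega)$ that is bounded away from $0$ in $L^2$ of a fixed ball and satisfies $Q[u_k]\to 0$; moreover, in the critical case the $L^2_{\loc}$-limit of a (normalized) null sequence is, up to a constant, the ground state. I would first exploit assumption (1): since $Q_1$ is critical with ground state $\Phi$, a null sequence has the form $\phi_k=\Phi v_k$, and the ground state transform gives $Q_1[\Phi v_k]=\int_\Omega \Phi^2|\nabla v_k|^2\,{\rm d}x$. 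Thus criticality of $Q_1$ furnishes cutoffs $v_k\in C_c^\infty(\Omega)$ with $0\le v_k\le 1$, $v_k\to 1$ locally uniformly, and $\int_\Omega \Phi^2|\nabla v_k|^2\,{\rm d}x\to 0$.

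The heart of the argument is the ground state substitution identity for $Q_0$: for any $\psi\in H^1_{\loc}(\Omega)$ with $\psi\ge0$ and any $v\in C_c^\infty(\Omega)$, integration by parts yields
\[
Q_0[\psi v]=\int_\Omega \psi^2\,|\nabla v|^2\,{\rm d}x+\int_\Omega v^2\,\psi\,(Q_0\psi)\,{\rm d}x ,
\]
where $Q_0\psi$ is understood distributionally. I would apply this with $\psi=\Psi_+$ and $v=v_k$. The needed fact that $\Psi_+$ is a subsolution, $Q_0\Psi_+\le 0$, follows from assumption (2) via Kato's inequality: $-\Delta\Psi_+\le \chi_{\{\Psi>0\}}(-\Delta\Psi)$ distributionally, whence $Q_0\Psi_+\le \chi_{\{\Psi>0\}}(Q_0\Psi)\le 0$ (here $W_0\in L^p_{\loc}$, $p>N/2$, provides the regularity that legitimizes Kato's inequality). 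Since $\Psi_+\ge 0$ and $v_k^2\ge 0$ the correction term is nonpositive, and the comparison $(\Psi_+)^2\le C\Phi^2$ gives
\[
0\le Q_0[\Psi_+ v_k]\le \int_\Omega \Psi_+^2\,|\nabla v_k|^2\,{\rm d}x\le C\int_\Omega \Phi^2\,|\nabla v_k|^2\,{\rm d}x\longrightarrow 0 ,
\]
the lower bound being the nonnegativity of $Q_0$ from assumption (2). Because $v_k\to1$ locally uniformly and $\Psi_+\not\equiv0$, the sequence $\{\Psi_+ v_k\}$ is bounded away from $0$ in $L^2$ of a suitable ball, so it is a genuine null sequence for $Q_0$. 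The characterization above then shows that $Q_0$ is critical.

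It remains to identify the ground state with $\Psi$ itself. By the ground state alternative, the $L^2_{\loc}$-limit of the null sequence $\Psi_+v_k\to\Psi_+$ is (a multiple of) the ground state; hence $\Psi_+$ is the ground state, in particular a strictly positive solution $Q_0\Psi_+=0$. To see that $\Psi\ge0$, write $\Psi=\Psi_+-\Psi_-$: then $Q_0\Psi\le0$ and $Q_0\Psi_+=0$ give $Q_0\Psi_-\ge0$, so the negative part $\Psi_-\ge0$ is a supersolution. By the uniqueness of positive supersolutions of a critical operator, either $\Psi_-\equiv0$ or $\Psi_-$ is a positive multiple of $\Psi_+$; the latter is impossible, since $\Psi_-$ vanishes on the set $\{\Psi_+>0\}=\Omega$. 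Hence $\Psi_-\equiv0$, so $\Psi=\Psi_+$ is the ground state of $Q_0$, as claimed.

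The main obstacle is the transfer step: guaranteeing that the correction term in the ground state substitution carries the correct (nonpositive) sign and is controlled. This rests on the subsolution property of $\Psi_+$ (Kato's inequality) and, crucially, on the pointwise comparison $(\Psi_+)^2\le C\Phi^2$, which lets one dominate $\int_\Omega\Psi_+^2|\nabla v_k|^2\,{\rm d}x$ by the vanishing weighted capacity $\int_\Omega\Phi^2|\nabla v_k|^2\,{\rm d}x$ of the critical operator $Q_1$. The regularity hypotheses $W_j\in L^p_{\loc}$, $p>N/2$, are used throughout to ensure continuity and Harnack estimates for $\Phi$ and $\Psi_+$, and to justify both the integration by parts and Kato's inequality.
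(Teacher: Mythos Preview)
The paper does not supply its own proof of this statement: Theorem~\ref{pinch-thm} is quoted verbatim from \cite[Theorem~1.7]{pinch1} as a black box and then applied in the proof of Theorem~\ref{improved-hardy}. So there is no ``paper's proof'' to compare against.

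That said, your argument is essentially the proof given in \cite{pinch1}. The key mechanism there is exactly the one you identify: the ground state transform for $Q_1$ produces cutoffs $v_k$ with $\int_\Omega \Phi^2|\nabla v_k|^2\to 0$; Kato's inequality promotes the subsolution property from $\Psi$ to $\Psi_+$; the Picone-type identity $Q_0[\Psi_+ v_k]=\int \Psi_+^2|\nabla v_k|^2+\int v_k^2\,\Psi_+\,(Q_0\Psi_+)$ then yields $0\le Q_0[\Psi_+ v_k]\le C\int \Phi^2|\nabla v_k|^2\to 0$, giving a null sequence for $Q_0$. The only point worth tightening is the justification of the integration-by-parts/Picone step when $\Psi_+$ is merely in $H^1_{\loc}$ and $Q_0\Psi_+$ is a priori only a distribution: one needs that $Q_0\Psi_+$ is a nonpositive Radon measure and that $\Psi_+$ is continuous so that the pairing $\int v_k^2\,\Psi_+\,(Q_0\Psi_+)$ makes sense and has the right sign. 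This is where the hypothesis $W_j\in L^p_{\loc}$, $p>N/2$, is genuinely used (local boundedness and Harnack for weak solutions/supersolutions). Your closing identification $\Psi=\Psi_+$ via strict positivity of the ground state is correct.
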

Recently the above result is extended to more general settings. We refer to \cite{ABG} for further details.

\medskip

We have now all the tools necessary for the proof of our main theorem.

%%%%%%%%%%%%%%%%%%%%%%%%%%%%%%%%%%%%%%%%%%%%%%%%%%%%%%%%%%%%%%%%%%%%%%%%%%%%%%%%%%%%%%%%%%%

\medskip

\noindent{\bf Proof of Theorem~\ref{improved-hardy}.} The proof of the inequality  rests on supersolution technique. The construction of a supersolution (in fact a solution in the case at hand) for the desired equation
directly follows from Lemma~\ref{main-lemma2} which states that, for all $ \lambda \leq  \lambda_{1}(\hn)$, the function $ \Psi_{\lambda}$, as defined there, is a positive solution of \eqref{main-lemma-eq}. Moreover, $\Psi_{\lambda} \in H^{1}_{\loc} (\hn \setminus \{ x_0 \}),$ and hence the required inequality \eqref{improved-poinc-lambda} follows using
the Allegretto-Piepenbrink theorem \cite[Theorem~2.12]{Cycon}.

\medskip
Next, by invoking
 Theorem~\ref{pinch-thm}, we show that  $\Psi_{\lambda}$ is the ground state of $ -\Delta_{\hn} -\lambda -V_{\lambda}(r)$ with $V_{\lambda}(r)$ as given in \eqref{potential}.
For this, following the notation of Theorem~\ref{pinch-thm}, we consider the operators defined in $\hn \setminus \{ x_0 \}$:
$$
Q_j := -\Delta_{\hn}   + W_j, \quad j = 0,1,
$$
where
$$W_0 =  -\lambda - V_{\lambda} \quad  \mbox{and} \quad  W_1 =  -\lambda_{1}(\hn) - V_{\lambda_{1}(\hn)}\,.
$$
Clearly, $W_j \in L^{p}_{\loc} (\hn \setminus \{ x_0 \}; \mathbb{R})$ for any $p > 1.$
By \cite[Theorem~2.1]{BGG} we know that when $N\geq 3$ the operator $Q_1$ is critical in $\hn \setminus \{ x_0 \}$ and the corresponding ground state is
 $\Psi_{\lambda_1}(r)=\Phi (r) : =r^{-\frac{N-2}{2}}  \left(\frac{\sinh r}{r} \right)^{-\frac{N-1}{2}} $. The same statement holds for $N=2$ but we postpone the proof to the end of the section.  As concerns the operator $Q_0$,  by Lemma~\ref{main-lemma2},
 we know that $\Psi_{\lambda} (r)= r^{-\frac{N-2}{2}} \left(\frac{\sinh r}{r} \right)^{-\frac{N-1+\gamma_N(\lambda)}{2}}
 \in H^{1}_{\loc} (\hn \setminus \{ x_0 \})$ satisfies $Q_0 \Psi_{\lambda} = 0$.  Moreover, for all $N \geq 3$, we have
  \begin{equation}
\label{ e-17}
\left(\frac{\Psi_{\lambda}(r)}{\Phi(r)}\right)^2 =\left(\frac{r}{\sinh r}\right)^{\gamma_N(\lambda)}\leq 1\quad \text{for all } r>0\,,
\end{equation}
 since $\gamma_N(\lambda) \geq 0$ for $0 \leq \lambda \leq \lambda_1(\hn)$. Therefore, all the assumptions of Theorem~\ref{pinch-thm} are satisfied and we conclude that $Q_0$, namely the operator $-\Delta_{\hn} - \lambda - V_{\lambda}$, is critical in $\hn \setminus \{ x_0 \}$ for all $0 \leq \lambda \leq \lambda_1(\hn)$ and $\Psi_{\lambda}$ is its ground state.
 \medskip

To complete the proof we still have to show that the operator $Q_1$ is  also critical in dimension two. To this aim we show that the equation $Q_1 u=0$ admits a ground state in $\hnn\setminus \{x_{0}\}$, namely a positive solution of minimal growth in a neighborhood of infinity in $\hnn\setminus \{x_{0}\}$, see \cite[Section 1]{PT2}. When $N=2$ the function $\Phi$ defined above reads $\Phi (r) = \left(\frac{r}{\sinh r} \right)^{\frac{1}{2}} $. Let $f$ be a smooth radial function in $\hnn\setminus \{x_{0}\}$, also exploiting Lemma \ref{main-lemma}, one can verified that
$$-\Delta_{\hn} (\Phi(r) f(r))=\frac{1}{4}\left(1+\frac{1}{r^2}- \frac{1}{\sinh r^2}\right)\,\Phi(r) f(r)-   \left(f^{\prime \prime}(r)+ \frac{N-1}{r} f^{\prime}(r) \right) \,\Phi(r)\,.$$
From the above computations it follows that two linearly independent solutions of the equation $Q_1 u=0$ are given explicitly by :
$$
\Phi (r) = \left(\frac{r}{\sinh r} \right)^{\frac{1}{2}}  \quad \mbox{and} \quad  \overline \Phi (r) = \left(\frac{r}{\sinh r} \right)^{\frac{1}{2}} \log r,
$$
 hence
$\Phi $ is a positive global solution while $\overline \Phi$ changes sign. Since $\overline \Phi$ is a positive solution of $Q_1 u = 0$ near infinity of $\hnn\setminus\{x_{0}\}$ and
\[
\lim_{r \rightarrow 0} \frac{\Phi(r)}{|\overline \Phi(r)|}=\lim_{r \rightarrow +\infty} \frac{\Phi (r)}{\overline \Phi (r)} = 0\,,
\]
by \cite[Proposition 6.1]{pinch} we conclude that $\Phi$ is  a positive solution of minimal growth in a neighborhood of infinity in $\hnn\setminus\{x_{0}\}$ and hence a ground
state of the equation $Q_1 u=0$. Namely, $Q_1$ is critical in $\hnn\setminus\{x_{0}\}$. By \eqref{ e-17} and Theorem 4.3, it follows that $\Psi_\lambda$ is a ground state of $Q_0$. This completes the proof.\par

\medskip

\noindent{\bf Proof of Theorem~\ref{improved-hardy_different}.}
The proof is divided into two steps. It rests on the explicit construction of solutions and then using the result of \cite{pinch} we derive an optimal
Hardy weight for the related operator.  \\

{\bf Step 1 :} It is an immediate consequence of Lemma~\ref{main-lemma} that $u_0(r) : = (\sinh r)^{\frac{2-N}{2}}$ satisfies the following equation :

\begin{equation}\label{super_hardy_type}
Hu_0:= -\Delta_{\hn} u_0 \, - \, \frac{(N-2)^2}{4} \frac{1}{\sinh^2 r} u_0 \, - \, \frac{N(N-2)}{4} u_0 = 0.
\end{equation}
Now we shall construct a second solution. Let us define $v(r) = (\sinh r)^{\frac{N- 2}{2}} w(r),$ where $w$ solves \eqref{super_hardy_type}. Then, $v$ satisfies the following equation

$$
v^{\prime \prime}(r) + \coth r v^{\prime}(r) = 0.
$$
This immediately implies either  $v(r) = \log \left| \tanh \frac{r}{2}\right|$ or constant. Therefore two independent positive solutions of the equation $Hu=0$  are
$u_0(r) =  (\sinh r)^{\frac{2-N}{2}}$ and $ u_1(r) = - (\sinh r)^{\frac{2-N}{2}} \log \left| \tanh \frac{r}{2}\right|.$ \\

{\bf Step 2 :} Now we evoke  \cite{pinch} for the construction of an optimal Hardy weight involving two independent positive solutions.
Using the above two positive solutions of the equation $Hu=0$, we
obtain the following optimal Hardy weight (in the sense of \cite{pinch})
$$
W: = \frac{1}{4}\left| \nabla \log\left( \dfrac{u_1}{u_0} \right) \right|^2  = \frac{1}{4} \frac{1}{\sinh^2 r (\log (\tanh \frac{r}{2}))^2}\,,
$$
for the operator $H$. In particular, $H - W$ is critical.
\medskip

{\bf Sharpness:} To prove the sharpness of the constant $\frac{N(N-2)}{4},$ let us fix some notations. Denote the cone of all positive solutions
of the equation $P \, u = 0$ in $\hn$ by $\mathcal{C}_{P}(\hn),$ where $P$ denotes any second order elliptic operator.  Define for a nonnegative potential $V$,

$$
\lambda_{0}(P, V, \hn) : = \sup \{ \lambda \in \mathbb{R} : \mathcal{C}_{P - \lambda V}(\hn) \neq \emptyset \},
$$
and
$$
\lambda_{\infty}(P,  V,\hn) : = \sup \{ \lambda \in \mathbb{R} : \exists \, K \Subset \hn  \, {\rm s.t.} \,  \mathcal{C}_{P - \lambda V}(\hn \setminus K) \neq \emptyset \}.
$$
Clearly $ \lambda_{0}(P,  V,\hn) \leq \lambda_{\infty}(P,V,  \hn).$ \\
From \cite{pinch}, we also know that the above optimal Hardy weight $W$ satisfies

$$
\lambda_{0}(H-W,  W,\hn\setminus\{x_0\}) = \lambda_{\infty}(H -W, W, \hn\setminus\{x_0\}) = 0.
$$
Furthermore, $\lambda=1$ is the best constant in a neighborhood of infinity of $\hn$ for the inequality $H-\lambda W\geq 0$.

Since, $W(r) \rightarrow 0$ as $r \rightarrow \infty,$ we conclude that for any $\varepsilon>0 $
there exists a compact set $K_\varepsilon$ containing $x_0$ such that
$$0\leq \lambda_{0}(H,1,  \hn\setminus\{x_0\})
\leq  \lambda_{0}(H,1,  \hn\setminus K_\varepsilon) \leq \lambda_{0}(H- W,W,  \hn\setminus K_\varepsilon)+\varepsilon=\varepsilon.$$

Therefore, $\lambda_{0}(H,1,  \hn\setminus\{x_0\}) = 0.$ Hence, $\frac{N(N-2)}{4}$ is the best constant in the above sense, and the proposition is proved.

%%%%%%%%%%%%%%%%%%%%%%%%%%%%%%%%%%%%%%%%%%%%%%%%%%%%%%%%%%%%%%%%%%%%%%%%%%%%%%%%%%%%%%%%
%%%%%%%%%%%%%%%%%%%%%%%%%%%%%%%%%%%%%%%%%%%%%%%%%%%%%%%%%%%%%%%%%%%%%%%%%%%%%%%%%%%%%%%%

\section{Optimality issues: proof of Theorems \ref{maintheorem1} and  \ref{maintheorem2}.}\label{proof_thm_1}

\subsection{Proof of Theorem~\ref{maintheorem1} }

\medskip

Let $N=3$. It is a well known fact that the equality in the Hardy inequality \eqref{hardy} is never achieved  in $H^{1}(\hn )$ for any $N \geq 3$, hence the infimum for $I(0)$ is never achieved. Therefore, it is enough to consider the case $0 < \lambda \leq \lambda_1({\mathbb{H}^{3}}).$ Furthermore, we have already seen that $\hat{\lambda}^3=\lambda_1({\mathbb{H}^{3}})=1$, hence $I(\lambda)=\frac{1}{4}$ for every $0\leq \lambda \leq 1$, where $I(\lambda)$ is defined by \eqref{quotient}. For $0 < \lambda <1$ it is easy to see that minimizers do not exist. Indeed, suppose for some
$0 < \lambda_0 < 1 $ there exists a minimizer $u_{\lambda_0} \in H^{1}(\mathbb{H}^{3})$ for $I(\lambda_0)$, then  any $\bar \lambda$ with $\lambda_0 < \bar \lambda < 1$ yields

 \begin{align*}
 \frac{1}{4} & = \frac{\int_{\hn} |\nabla_{\hn} u_{\lambda_{0}}|^2 \ \dvh - \lambda_0 \int_{\hn} u^2_{\lambda_0} \ \dvh}{\int_{\hn} \frac{u^2_{\lambda_0}}{r^2} \ \dvh} \\
 & > \frac{\int_{\hn} |\nabla_{\hn} u_{\lambda_{0}}|^2 \ \dvh - \bar \lambda \int_{\hn} u^2_{\lambda_0} \ \dvh}{\int_{\hn} \frac{u^2_{\lambda_0}}{r^2} \ \dvh} \geq \frac{1}{4}\, ,
 \end{align*}
a contradiction. Alternatively, we note that the subcriticality of the operator $-\Delta_{\hn} - \lambda- I(\lambda) \,\frac{1}{r^2}$ for $\lambda_0 < \bar \lambda < 1$ readily implies nonexistence of minimizers. To complete the proof  it remains to show that minimizers do not exist also for $\lambda = 1$. From the proof of Theorem \ref{improved-hardy} we see that $\Psi_{\lambda_1({\mathbb{H}^{3}})}(r)= \frac{\sqrt{r}}{\sinh r}$ satisfies
\begin{align*}
  - \Delta_{\mathbb{H}^{3}} \Psi_{\lambda_1({\mathbb{H}^{3}})} - \Psi_{\lambda_1({\mathbb{H}^{3}})}  & = \frac{1}{4} \frac{\Psi_{\lambda_1({\mathbb{H}^{3}})}}{r^2} \quad \text{ in }\mathbb{H}^{3} \setminus \{ x_0 \},
\end{align*}
and $\Psi_{\lambda_1({\mathbb{H}^{3}})}$ is the unique ground state to the corresponding equation. Since  $\Psi_{\lambda_1({\mathbb{H}^{3}})} \notin  H^{1}(\mathbb{H}^{3} )$, $I(\lambda)$ does not admit a minimizer. This completes the proof.

\medskip

 \subsection{Proof of Theorem~\ref{maintheorem2}}
Let $N>3$.  We start by noting that from Corollary \ref{improved-hardy_1} we have $\hat{\lambda}^N \geq N-2$. Furthermore, $I(N-2)$ is not achieved. Indeed, the operator
$$
 - \Delta_{\hn} - \frac{(N-2)^2}{4} \frac{1}{r^2} + (N-2)
$$
is subcritical in $\mathbb{H}^{N} \setminus \{ x_0 \}$, since the proof of Theorem~\ref{improved-hardy} implies that the function
$\Psi_{N-2}(r)= r^{\frac{N-2}{2}} (\sinh r)^{2 - N}$ is
 a positive supersolution of the Euler-Lagrange equation associated to $I(N-2)$ and it is not the ground state.  \par
 As concerns the upper bound for $\hat{\lambda}^N$, it also follows as a corollary of Theorem~\ref{improved-hardy}. Indeed, since $\hat{\lambda}^N \geq N-2$, from the definition of $\hat{\lambda}^N$ we have

\[\begin{aligned}
\int_{\hn} |\nabla_{\hn}u|^2 \ \mathrm{d}v_{\hn}  &\geq \left( \frac{N-2}{2} \right)^2 \int_{\hn} \frac{u^2}{r^2} \ \mathrm{d}v_{\hn} + \hat{\lambda}^N \int_{\hn} u^2 \ \mathrm{d}v_{\hn} \\
& =  \left( \frac{N-2}{2} \right)^2 \int_{\hn} \frac{u^2}{r^2} \ \mathrm{d}v_{\hn} + (N-2) \int_{\hn} u^2 \ \mathrm{d}v_{\hn}\\  &+ (\hat{\lambda}^N - (N-2)) \int_{\hn} u^2 \ \mathrm{d}v_{\hn}.
\end{aligned}
\]
Therefore, combining the fact that $g(r) \leq \frac{1}{3}$ with the criticality issue of Theorem~\ref{improved-hardy}, we readily infer that
$$
\hat{\lambda}^N - (N-2) <\frac{(N-2)(N-3)}{6}.
$$

\medskip
\par

\noindent\textbf{Proof of (i).}

\medskip
\noindent For $0 < \lambda < \hat{\lambda}^N$, the operator  $ -\Delta_{\hn} - \frac{(N-2)^2}{4} \frac{1}{r^2} - \lambda$ is \emph{subcritical}
   in $\hn \setminus \{ x_0 \}$, and hence, there is no minimizer associated to the related functional inequality.

  \medskip

Assume now $\lambda = \hat{\lambda}^N.$ This is the most delicate case.
We adapt to our setting the Euclidean approach of constructing suitable ``subsolution" to show non-achievement of the Hardy constant, see for instance \cite{Adimurthi}.  Suppose, by contradiction, that there exists a minimizer $\hat u \in H^{1}(\hn)$ of $I( \hat{\lambda}^N) =  \frac{(N-2)^2}{4}$, then it satisfies the equation

 \begin{equation*}
 %\label{satisfy-eq}
 -\Delta_{\hn} \hat u - \hat \lambda^N \, \hat u- \frac{(N-2)^2}{4}\, \frac{\hat u}{r^2}=0 \quad \text{a.e. in } \hn\setminus\{0\}\,.
 \end{equation*}
 Without loss of generality, by standard symmetrization on the hyperbolic space, see \cite{ALB}, we may assume $\hat u$ is radial and nonnegative. Furthermore, being superharmonic, $\hat u$ turns out to be positive by the Strong Maximum Principle \cite{Gilb}.  \medskip

Let $\delta < -1/2$ and $\varphi_{\delta}(x)=|x|^{-\frac{N-2}{2}}\, \log(\frac{1}{|x|})^{\delta}\in H^1(B_{R})$ for $0<R<1/\mathrm{e}$. By computing in hyperbolic radial coordinates, for $r\in (0,R)$ one has
\[\begin{aligned}
& -\Delta_{\hn} \varphi_{\delta}- \hat \lambda^N \, \varphi_{\delta} - \frac{(N-2)^2}{4}\, \frac{\varphi_{\delta}}{r^2}\\ &=  \frac{\varphi_{\delta}}{r^2 (\log r)^2} \left[\delta (1-\delta)-\hat \lambda^N r^2 (\log r)^2 -\delta(N-1) r^2 \,\log r\right.\\&\left.+ (N-1) \left (\coth r -\frac{1}{r}\right) \left(\frac{N-2}{2} r (\log r)^2+\delta r\, \log r\right)  \right]\,.
\end{aligned}\]
Set now $0<R_1<1/\mathrm{e}$ (not depending on $\delta$!) such that for all $r < R_1$ we have
$$(N-1) \left (\coth \frac{1}{\mathrm{e}} -\mathrm{e}\right) \frac{N-2}{2}\, r\,(\log(r))^2 <\frac{1}{4}\,.$$
Since $ |\log r| \leq \frac{N-2}{2}  (\log r)^2$ for $r<1/\mathrm{e}$ and $N\geq 4$, for $r<R_1$ we infer
$$\begin{aligned}&\delta (1-\delta)-\hat \lambda^N r^2 (\log r)^2 -\delta(N-1) r^2 \,\log r\\ &+ (N-1) \left (\coth r -\frac{1}{r}\right) \left(\frac{N-2}{2} r (\log r)^2+\delta r\, \log r\right)\\ &\leq \delta (1-\delta)+ (N-1) \left (\coth \frac{1}{\mathrm{e}} -\mathrm{e}\right) \,\frac{N-2}{2} \, r (\log r)^2 (1-\delta)\\ &\leq \delta (\frac{3}{4}-\delta)+ \frac{1}{4}\leq -\frac{3}{8}\,.\end{aligned}$$
Hence,
$$-\Delta_{\hn} \varphi_{\delta}- \hat \lambda^N \, \varphi_{\delta} - \frac{(N-2)^2}{4}\, \frac{\varphi_{\delta}}{r^2} < 0\quad \text{in } B_{R_1}\setminus\{0\}\,. $$
Set $M(\delta)=\frac{\hat u(R_1)}{\varphi_{\delta}(R_1)}$ and $\psi_{\delta}(r):=\hat u(r)-M(\delta)\,\varphi_{\delta}(r)$. Then, $\psi_{\delta} \in H^{1}_{0}(B_{R_1})$ and satisfies
$$-\Delta_{\hn} \psi_{\delta}- \hat \lambda^N \, \psi_{\delta} - \frac{(N-2)^2}{4}\, \frac{\psi_{\delta}}{r^2} > 0\quad \text{in } B_{R_1}\setminus\{0\}\,.$$
 Set now $\psi_{\delta}^{-} = \min \{\psi_{\delta}, 0\}$ and $\psi_{\delta}^{+} = \max \{\psi_{\delta}, 0\}$, we have that $\psi_{\delta}^{-}, \psi_{\delta}^{+}\in H^{1}_{0}(B_{R_1})$. By multiplying the above inequality by $\psi_{\delta}^{-}$, using the fact $\psi_{\delta} = \psi_{\delta}^{+} + \psi_{\delta}^{-}$ and recalling the definition of $\hat{\lambda}^N$ in \eqref{hat}, we get
\begin{align*}
0\geq &\int_{B_{R_1}}\,\left(-\Delta_{\hn} \psi_{\delta}- \hat \lambda^N \, \psi_{\delta} - \frac{(N-2)^2}{4}\, \frac{\psi_{\delta}}{r^2} \right)\,\psi_{\delta}^{-} \,\dvh \\
=&\int_{B_{R_1}} |\nabla \psi_{\delta}^{-}|^2 \, \dvh - \hat\lambda^N \int_{B_{R_1}} (\psi_{\delta}^{-})^2 \, \dvh
-  \frac{(N-2)^2}{4} \int_{B_{R_1}} \frac{(\psi_{\delta}^{-})^2}{r^2} \, \dvh \geq 0 \,.
\end{align*}
Hence, $\psi_{\delta}^{-} = 0$. In particular, $\psi_{\delta} > 0$ and in turn $\hat u(r)>M(\delta)\,\varphi_{\delta}(r)$ for $0<r<R_1$. Finally, letting $\delta \rightarrow \bar \delta:=-\frac{1}{2}$ we readily get a contradiction since $M\left(\bar \delta\right)>0$ (due to the fact that $\hat u$ is positive and $R_1$ does not depend on $\delta$) and
$$+\infty > \int_{B_{R_1}} \frac{\hat u^2}{r^2}\, \dvh \geq  M^2\left(\bar \delta\right) \int_{B_{R_1}} \frac{\varphi_{\bar \delta}^2}{r^2}\, \dvh= +\infty\,.$$

\medskip

\noindent\textbf{Proof of (ii).} Let $\lambda < \lambda_1(\hn)$. Exploiting the
 Poincar\'e inequality \eqref{poincare}, in the sequel  we will endow the space $H^{1}(\hn)$ with the equivalent norm:

 \begin{equation}\label{equivalent}
 ||u||_{\lambda} :=  \left[ \int_{\hn} \left( |\nabla_{\hn} u|^2 - \lambda u^2 \right) \dvh  \right]^{\frac{1}{2}}, \quad u \in C_{c}^{\infty}(\hn)\,.
 \end{equation}
Since, by inequality \eqref{PH} we know that the
embedding $H^{1}(\hn) \hookrightarrow L^{2}(\hn, \frac{1}{r^2} \mbox{d}v_{\hn})$ is continuous but not compact, the existence of a minimizer to $I(\lambda)$ does not follow straightforwardly. When $\hat \lambda ^N<\lambda < \lambda_1(\hn)$, we overcome this difficulty by adapting to our setting the approach of \cite{EM}. To this aim, the crucial tool will be the following concentration compactness lemma in the hyperbolic setting.

\begin{lem}\label{CCL}
For $\lambda < \lambda_1(\hn)$, let $H^{1}(\hn)$ be endowed with the norm \eqref{equivalent}. Furthermore, let $\{ u_n \}$ be a bounded sequence in $H^{1}(\hn)$ such that: $u_n \rightharpoonup u \in H^{1}(\hn)$,
\begin{equation}\label{weaklimit1}
\left( |\nabla_{\hn} u_n|^2 - \lambda u_n^2 \right) \, \emph{d}v_{\hn} \rightharpoonup^* \mu  \quad \text{ and }\quad   \frac{u_n^2}{r^2}\, \emph{d}v_{\hn}  \rightharpoonup^* \nu\quad  \text{in the sense of measures.}
\end{equation}
Then, there holds

\begin{equation*}
%\label{weaklimit3}
\mu \geq \left( |\nabla_{\hn} u|^2 - \lambda u^2 \right) \ \emph{d}v_{\hn} + \mu_{0} \delta_{x_0}, \qquad 
%\label{weaklimit4}
\nu =  \frac{u^2}{r^2} \ \emph{d}v_{\hn} + \nu_{0} \delta_{x_0},
\end{equation*}
where $\delta_{x_0}$ is the Dirac measure centered at $x_0$, and $0 \leq \nu_0 \leq  \mu_0\,\frac{4}{(N-2)^2}$.
\end{lem}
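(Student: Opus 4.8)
The plan is to adapt the classical Lions concentration–compactness principle to the hyperbolic setting, the only subtlety being that the ``defect'' is supported at the single point $x_0$ where the Hardy weight $r^{-2}$ is singular, and that the energy form involves the shifted Laplacian $-\Delta_{\hn}-\lambda$ rather than $-\Delta_{\hn}$. First I would record the inequality \eqref{PH}, rescaled: for every $\varphi\in C_c^\infty(\hn)$ one has $I(\lambda)\int_{\hn}\frac{\varphi^2}{r^2}\,\dvh\le \int_{\hn}\bigl(|\nabla_{\hn}\varphi|^2-\lambda\varphi^2\bigr)\dvh=\|\varphi\|_\lambda^2$, which is exactly the tool that will quantify the inequality $\nu_0\le \frac{4}{(N-2)^2}\mu_0=\frac{\mu_0}{I(0)}$ at the end (here I am implicitly using $I(\lambda)\le I(0)=\bigl(\tfrac{N-2}2\bigr)^2$; more carefully the local Hardy constant near $x_0$ is $\bigl(\tfrac{N-2}2\bigr)^2$ since the shifted term and the curvature corrections are lower-order near the pole, so the relevant local constant is indeed $\bigl(\tfrac{N-2}2\bigr)^2$).

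Next I would pass to the weak limit $u$. Set $v_n:=u_n-u$, so $v_n\rightharpoonup 0$ in $H^1(\hn)$, and by Brezis–Lieb type splitting $\bigl(|\nabla_{\hn}u_n|^2-\lambda u_n^2\bigr)\dvh=\bigl(|\nabla_{\hn}u|^2-\lambda u^2\bigr)\dvh+\bigl(|\nabla_{\hn}v_n|^2-\lambda v_n^2\bigr)\dvh+o(1)$ in the sense of measures (the cross terms converge to zero weakly-$*$ using $v_n\rightharpoonup0$ and the equivalence of $\|\cdot\|_\lambda$ with the standard $H^1$ norm guaranteed by \eqref{poincare} since $\lambda<\lambda_1(\hn)$), and similarly $\frac{u_n^2}{r^2}\dvh=\frac{u^2}{r^2}\dvh+\frac{v_n^2}{r^2}\dvh+o(1)$. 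Hence it suffices to treat $\tilde\mu:=\mathrm{w}^*\!-\!\lim\bigl(|\nabla_{\hn}v_n|^2-\lambda v_n^2\bigr)\dvh$ and $\tilde\nu:=\mathrm{w}^*\!-\!\lim\frac{v_n^2}{r^2}\dvh$ and show $\tilde\nu=\nu_0\delta_0$, $\tilde\mu\ge\mu_0\delta_0$ with $\nu_0\le\frac{4}{(N-2)^2}\mu_0$. The support of $\tilde\nu$ is concentrated at $x_0$: for any $\phi\in C_c^\infty(\hn)$ vanishing near $x_0$, the weight $\frac1{r^2}$ is bounded on $\mathrm{supp}\,\phi$, and on that region the embedding $H^1(\hn)\hookrightarrow L^2_{\mathrm{loc}}$ is compact (Rellich), so $\int \phi^2\frac{v_n^2}{r^2}\dvh\to0$; thus $\mathrm{supp}\,\tilde\nu\subseteq\{x_0\}$, i.e.\ $\tilde\nu=\nu_0\delta_0$ for some $\nu_0\ge0$, and likewise away from $x_0$ we get $\tilde\mu\ge0$ so $\tilde\mu=\mu_0\delta_0+$ (nonnegative measure on $\hn\setminus\{x_0\}$) $\ge\mu_0\delta_0$ with $\mu_0=\tilde\mu(\{x_0\})$.

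Finally I would prove the quantitative bound $\nu_0\le\frac{4}{(N-2)^2}\mu_0$. Fix $\varepsilon>0$ and a cutoff $\eta_\varepsilon\in C_c^\infty(B_\varepsilon(x_0))$ with $\eta_\varepsilon\equiv1$ on $B_{\varepsilon/2}(x_0)$, $0\le\eta_\varepsilon\le1$. Apply the local Hardy inequality (i.e.\ \eqref{hardy}, which holds with the \emph{best} constant $\bigl(\tfrac{N-2}2\bigr)^2$) to $\eta_\varepsilon v_n$: $\bigl(\tfrac{N-2}2\bigr)^2\int\frac{\eta_\varepsilon^2 v_n^2}{r^2}\dvh\le\int|\nabla_{\hn}(\eta_\varepsilon v_n)|^2\dvh$. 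Expanding the right side and using $\int v_n^2|\nabla\eta_\varepsilon|^2\dvh\to0$ (by compactness of $H^1\hookrightarrow L^2$ on the annulus where $\nabla\eta_\varepsilon$ is supported, which stays away from $x_0$) together with the weak convergence $v_n\rightharpoonup0$ to kill the cross term $\int\eta_\varepsilon v_n\nabla\eta_\varepsilon\cdot\nabla v_n\dvh\to0$, and then absorbing the $-\lambda\eta_\varepsilon^2 v_n^2$ term which tends to $0$ as well (again by local compactness, $\varepsilon$ fixed), one passes to the limit $n\to\infty$ to obtain $\bigl(\tfrac{N-2}2\bigr)^2\int\eta_\varepsilon^2\,d\tilde\nu\le\int\eta_\varepsilon^2\,d\tilde\mu$, hence $\bigl(\tfrac{N-2}2\bigr)^2\nu_0\le\mu_0$ after letting $\varepsilon\to0$ (using $\eta_\varepsilon(x_0)=1$ and dominated convergence for the measures localized at $x_0$). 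This gives $\nu_0\le\frac{4}{(N-2)^2}\mu_0$ and completes the proof. The main obstacle I anticipate is the careful bookkeeping of which error terms vanish in the weak limit: all of them do, but each relies on local compactness away from $x_0$ plus $v_n\rightharpoonup0$, and one must be sure that the only place where noncompactness bites is the single point $x_0$ — this is exactly why the statement records only a Dirac mass at $x_0$ and no ``mass at infinity,'' which is legitimate here because the shift $\lambda<\lambda_1(\hn)$ makes $\|\cdot\|_\lambda$ coercive and hence controls escape of mass to infinity via the noncompactness-free part of the argument (formally, the weight $1/r^2\to0$ at infinity so no concentration there).
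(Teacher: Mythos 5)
Your proposal is correct and follows essentially the same route as the paper: split $u_n=u+v_n$ with vanishing cross terms, use local compactness to force all concentration of $\nu$ into a Dirac mass at $x_0$, and apply the Hardy inequality \eqref{hardy} to cutoff localizations of $v_n$ to obtain $\nu_0\le\frac{4}{(N-2)^2}\,\mu_0$. The only cosmetic difference is that the paper first derives the reverse-H\"older inequality $\int_{\hn}\Phi^2\,{\rm d}\nu\le\frac{4}{(N-2)^2}\int_{\hn}\Phi^2\,{\rm d}\mu$ for all $\Phi\in C_c^\infty(\hn)$ and invokes Lions' atomic decomposition before ruling out atoms away from $x_0$, whereas you identify the support of the defect measure directly — a harmless streamlining.
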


\begin{proof}
The proof is divided into two steps.
\medskip

\noindent{\bf Step 1:} Since $\{ u_n \}$ is bounded in $H^{1}(\hn),$ \eqref{PH} implies that $\{ \frac{u_n}{r} \}$ is bounded in $L^2(\hn)$. Then,
$$ u_n \rightharpoonup u  \text{ in }H^{1}(\hn) \,,\quad u_n\rightarrow u  \text { in }L^{2}_{\loc}(\hn) \quad \text{ and }\quad  \frac{u_n}{r} \rightharpoonup  \frac{u}{r}  \text{ in }L^{2}(\hn)\,,$$
up to a subsequence. Denoting $v_n := u_n - u,$ it is then readily seen that
 $$ v_n \rightharpoonup 0  \text{ in }H^{1}(\hn) \,,\quad v_n\rightarrow 0  \text { in }L^{2}_{\loc}(\hn) \quad \text{ and }\quad  \frac{v_n}{r} \rightharpoonup 0  \text{ in }L^{2}(\hn)\,.$$
On the other hand, for $\Phi \in C_{c}^{\infty}(\hn)$ we have
\[\begin{aligned}
\int_{\hn} \Phi\, \left( |\nabla_{\hn} u_n|^2 - \lambda u_n^2 \right) \, {\rm d}v_{\hn}& =\int_{\hn}  \Phi\, \left( |\nabla_{\hn} u|^2 - \lambda u^2 \right) \, {\rm d}v_{\hn}\\ &+\int_{\hn}  \Phi\, \left( |\nabla_{\hn} v_n|^2 - \lambda v_n^2 \right) \, {\rm d}v_{\hn}\\
&+2 \int_{\hn}\Phi\,\left(\nabla v_n \cdot \nabla u +v_n u  \right)\, {\rm d}v_{\hn},
\end{aligned}
\]
 and
$$
\int_{\hn} \Phi \, \frac{ u_n^2}{r^2} \, \dvh = \int_{\hn} \Phi\, \frac{ u^2}{r^2}\, \dvh+ \int_{\hn} \Phi\, \frac{ v_n^2}{r^2} \, \dvh +2 \int_{\hn} \Phi\, \frac{ v_n\,u }{r^2}\, \dvh.
$$
Hence, taking the limit and recalling \eqref{weaklimit1}, for all $\Phi \in C_{c}^{\infty}(\hn)$ we conclude that
$$\int_{\hn} \Phi\, {\rm d}\mu=\int_{\hn}  \Phi\, \left( |\nabla_{\hn} u|^2 - \lambda u^2 \right) \, {\rm d}v_{\hn}+\int_{\hn} \Phi\, {\rm d}\mu_1,$$ 
and
$$
\int_{\hn} \Phi \, {\rm d}\nu = \int_{\hn} \Phi\, \frac{ u^2}{r^2}\, \dvh+\int_{\hn} \Phi \, {\rm d}\nu_1\,,
$$
where
$$
\left( |\nabla_{\hn} v_n|^2 - \lambda v_n^2 \right) \, {\rm d}v_{\hn} \rightharpoonup^* \mu_1  \quad \text{ and }\quad   \frac{v_n^2}{r^2}\, {\rm d}v_{\hn}  \rightharpoonup^* \nu_1\quad  \text{in the sense of measures.}$$

\medskip

\noindent{\bf Step 2:}
For what showed in Step 1, we may take $u=0$ in the following. Namely, we assume that $\{ u_n \}$ is such that
 $$ u_n \rightharpoonup 0  \text{ in }H^{1}(\hn) \,,\quad u_n \rightarrow 0  \text { in }L^{2}_{\loc}(\hn) \quad \text{ and }\quad  \frac{u_n}{r} \rightharpoonup 0  \text{ in }L^{2}(\hn), $$
and
$$
\left( |\nabla_{\hn} u_n|^2 - \lambda u_n^2 \right) \, {\rm d}v_{\hn} \rightharpoonup^* \mu ,  \quad \text{ and }\quad
\frac{u_n^2}{r^2}\, {\rm d}v_{\hn}  \rightharpoonup^* \nu\quad  \text{in the sense of measures.}
$$
For $\Phi \in C_{c}^{\infty}(\hn),$ we apply the Hardy inequality \eqref{hardy} to the functions $\{ \Phi u_n \}$ and we get

\begin{align*}
\frac{(N-2)^2}{4} \int_{\hn} \frac{(\Phi u_n)^2}{r^2} \, \dvh & \leq \int_{\hn} |\nabla_{\hn}(\Phi u_n)|^2 \, \dvh \\
& = \int_{\hn} \Phi^2 ( |\nabla_{\hn} u_n|^2 - \lambda u_n^2 ) \ \dvh + \int_{\hn} (\nabla_{\hn} \Phi)^2 u_n^2 \ \dvh \\
& +  2 \int_{\hn} (\Phi \nabla_{\hn} \Phi)(u_n \nabla_{\hn} u_n) \ \dvh + \lambda \int_{\hn} \Phi^2\, u_n^2 \ \dvh.
\end{align*}
Passing to the limit in the above inequality we conclude that
\begin{equation}\label{reverse_holder}
\int_{\hn} \Phi^2 \mbox{d}\nu \leq \,\frac{4}{(N-2)^2}\, \int_{\hn} \Phi^2 \mbox{d} \mu\, \quad \text{ for all } \Phi \in C_{c}^{\infty}(\hn).
\end{equation}
Then, a proper modification of the standard concentration compactness lemma \cite{Lions1, Lions2} yields that there exist a sequence of points ${x_i}\in\hn$ and two sequences of positive constants ${c_i}$ and ${\bar c_i}$ such that $\nu= \sum_{i=1}^{+\infty} c_i \delta_{x_i}$ and $\mu\geq \sum_{i=1}^{+\infty} \bar c_i \delta_{x_i}$, where $\delta_{x_i}$ is the Dirac measure centered at $x_i$. On the other hand, choosing $\Phi \in C_{c}^{\infty}(\hn)$ with supp$(\Phi) = K,$ a compact set, such that $x_0 \not \in K$,  we have that
$$
\int_{\hn} \frac{\Phi^2 u_n^2}{r^2} \ \dvh \leq C \int_{K} u_n^2 \, {\rm d}v_{\hn} \rightarrow 0 ,
$$
since $u_n \rightarrow 0$ in $L^2_{\loc}(\hn).$ Therefore the measure $\nu$ is only concentrated at $x_0$ and we conclude that $\nu=\nu_0 \delta_{x_{0}}$ and $\mu\geq \mu_0 \delta_{x_{0}},$ for some positive constants $\nu_0$ and $\mu_0$. Furthermore, inequality \eqref{reverse_holder}, together with standard measure theory arguments, gives
$$
\nu(B(x_0, \varepsilon)) \leq \,\frac{4}{(N-2)^2}\, \mu(B(x_0, \varepsilon)),
$$
where $\varepsilon > 0$ and $B(x_0, \varepsilon)$ denotes the ball with center $x_0$ and radius $\varepsilon.$ Finally, the arbitrariness of $\varepsilon$ implies
\[
\nu_0 \leq \frac{4}{(N-2)^2} \,\mu_0. \qedhere
\]
\end{proof}

The next lemma is devoted to study the  tightness of the measure defined above.

\begin{lem}\label{tightness}
In the same assumptions of Lemma \ref{CCL}, as $n \rightarrow \infty$ there holds
$$
\int_{\hn} \frac{u_n^2}{r^2} \, {\rm d}v_{\hn} = \int_{\hn} \frac{u^2}{r^2} \, {\rm d}v_{\hn} + \nu_0 + o(1).
$$
\end{lem}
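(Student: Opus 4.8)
The plan is to read the assertion as a tightness statement for the measures $\frac{u_n^2}{r^2}\,\dvh$. By Lemma~\ref{CCL} we already have $\frac{u_n^2}{r^2}\,\dvh\rightharpoonup^*\nu=\frac{u^2}{r^2}\,\dvh+\nu_0\delta_0$, and $\nu$ is a \emph{finite} measure: its total mass is $\int_{\hn}\frac{u^2}{r^2}\,\dvh+\nu_0$, finite because $u\in H^1(\hn)$ so that Hardy's inequality~\eqref{hardy} bounds $\int_{\hn}\frac{u^2}{r^2}\,\dvh$, while $\nu_0\le\frac{4}{(N-2)^2}\mu_0<\infty$. Since weak-$*$ convergence tested against $C_c(\hn)$ only pins down the mass on compact sets, what remains to prove is exactly that $\int_{\hn}\frac{u_n^2}{r^2}\,\dvh\to\nu(\hn)=\int_{\hn}\frac{u^2}{r^2}\,\dvh+\nu_0$, i.e. that no mass of $\frac{u_n^2}{r^2}\,\dvh$ escapes to spatial infinity.

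First I would record a uniform $L^2$-bound. Since $\{u_n\}$ is bounded for the norm~\eqref{equivalent} and $\lambda<\lambda_1(\hn)$, the Poincar\'e inequality~\eqref{poincare} gives $\int_{\hn}u_n^2\,\dvh\le(\lambda_1(\hn)-\lambda)^{-1}\|u_n\|_\lambda^2\le C_0$ with $C_0$ independent of $n$. Writing $B_R=B_R(x_0)$ for the geodesic ball about the pole, on $\hn\setminus B_R$ one has $r=\mathrm{d}(x,x_0)>R$, whence the key uniform tail estimate
$$
\sup_{n}\int_{\hn\setminus B_R}\frac{u_n^2}{r^2}\,\dvh\le\frac{C_0}{R^2}\xrightarrow[R\to\infty]{}0 .
$$
This is really the only non-automatic ingredient, and it is precisely where $\lambda<\lambda_1(\hn)$ is used: for $\lambda=\lambda_1(\hn)$ this $L^2$-bound (hence the tail control in this simple form) is lost, which is why that borderline value is treated separately in case~$(iii)$.

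It then remains to combine this with the weak-$*$ convergence via a standard cut-off argument, which I do not expect to cause any difficulty. Given $\varepsilon>0$, choose $R$ so large that both $\sup_n\int_{\hn\setminus B_R}\frac{u_n^2}{r^2}\,\dvh<\varepsilon$ and $\nu(\hn\setminus B_R)<\varepsilon$ (the latter using finiteness of $\nu$), and pick $\chi_R\in C_c^\infty(\hn)$ with $0\le\chi_R\le1$, $\chi_R\equiv1$ on $B_R$ and $\mathrm{supp}\,\chi_R\subset B_{2R}$. Splitting $\int_{\hn}\frac{u_n^2}{r^2}\,\dvh=\int_{\hn}\chi_R\frac{u_n^2}{r^2}\,\dvh+\int_{\hn}(1-\chi_R)\frac{u_n^2}{r^2}\,\dvh$, the first summand tends to $\int_{\hn}\chi_R\,\mathrm{d}\nu$ as $n\to\infty$ by~\eqref{weaklimit1} (since $\chi_R\in C_c(\hn)$), the second is $\le\varepsilon$ uniformly in $n$, and $|\int_{\hn}\chi_R\,\mathrm{d}\nu-\nu(\hn)|\le\nu(\hn\setminus B_R)<\varepsilon$. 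Hence $\limsup_n\big|\int_{\hn}\frac{u_n^2}{r^2}\,\dvh-\nu(\hn)\big|\le2\varepsilon$; letting $\varepsilon\to0$ and recalling $\nu(\hn)=\int_{\hn}\frac{u^2}{r^2}\,\dvh+\nu_0$ finishes the proof.
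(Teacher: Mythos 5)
Your proposal is correct and follows essentially the same route as the paper: the tail $\int_{\hn\setminus B_R}\frac{u_n^2}{r^2}\,\dvh$ is controlled uniformly in $n$ by combining the decay of $1/r^2$ with the $L^2$-bound on $u_n$ coming from the Poincar\'e inequality, and the compactly supported part is handled by testing the weak-$*$ convergence against a cut-off function. The only cosmetic difference is that the paper estimates the leftover annulus term via $L^2_{\loc}$ convergence of $u_n$ while you absorb it into the uniform tail bound together with the finiteness of $\nu$; both are valid.
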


\begin{proof}
By the definition of convergence of measure we have

\begin{equation}\label{convergence_measure}
\int_{\hn} \frac{u_n^2}{r^2} \, f \, \dvh = \int_{\hn} \frac{u^2}{r^2} \, f \, \dvh + f(x_0)  \nu_0 + o(1) \quad \forall f \in C_{c}^{\infty}(\hn).
\end{equation}

 Let $\varepsilon > 0, $ since $\frac{1}{r^2} \rightarrow 0$ as $r \rightarrow \infty,$ there exists $R_{\epsilon} > > 1$ such that
$\frac{1}{r^2} < \varepsilon$ $\forall \ r > R_{\varepsilon}.$ Thus

 \begin{align*}
\int_{\hn} \left( \frac{u_n^2}{r^2} - \frac{u^2}{r^2} \right) \, \dvh \,  -\nu_0 \,  & =  \int_{B(x_0, R_{\varepsilon})} \left( \frac{u_n^2}{r^2} - \frac{u^2}{r^2} \right) \, \dvh\,  -\nu_0 \\
&+
\int_{\hn \setminus B(x_0, R_{\varepsilon})}\left( \frac{u_n^2}{r^2} - \frac{u^2}{r^2} \right) \, \dvh  .
 \end{align*}
Using the fact that $\{ u_n \}$ is a bounded sequence in $H^{1}(\hn)$ together with Poincar\'e inequality, the last integral can be estimated as follows:

\begin{align*}
\int_{\hn \setminus B(x_0, R_{\varepsilon})}\left( \frac{u_n^2}{r^2} - \frac{u^2}{r^2} \right) \, \dvh  & \leq \varepsilon
 \left( \int_{\hn} u_n^2 \, \dvh + \int_{\hn} u^2 \, \dvh \right)  \leq C \varepsilon.
\end{align*}

Let us choose $\Psi \in C_{c}^{\infty}(\hn)$ such that $\Psi =1$ in $B(x_0, R_{\varepsilon})$ and \mbox{supp} $\Psi := B(x_0, 2R_{\varepsilon}),$ then we have

 \begin{align*}
 \left|  \int_{B(x_0, R_{\varepsilon})}\left(  \frac{u_n^2}{r^2} - \frac{u^2}{r^2}\right) \, \, \dvh  -\nu_0 \right|
 & \leq \left| \int_{B(x_0, 2 R_{\varepsilon})}  \Psi \left( \frac{u_n^2}{r^2} - \frac{u^2}{r^2} \right) \, \dvh - \nu_0 \right| \\
 & +  \left| \int_{B(x_0, 2R_{\varepsilon}) \setminus B(x_0, R_{\varepsilon})}  \Psi \left( \frac{u_n^2}{r^2} - \frac{u^2}{r^2} \right) \, \dvh \right|.
 \end{align*}
Therefore, using \eqref{convergence_measure} for the first term and the fact that $u_n\rightarrow u  \text { in }L^{2}_{\loc}(\hn)$ for the last term, we conclude that
$$
 \int_{B(x_0, R_{\varepsilon})} \left( \frac{u_n^2}{r^2} - \frac{u^2}{r^2} \right) \, \dvh \,  -\nu_0  \longrightarrow 0, \quad n \rightarrow \infty
$$
and the proof follows.
\end{proof}

%%%%%%%%%%%%%%%%%%%%%%%%%%%%%%%%%%%%%%%%%%%%%%%%%%%%%%%%%%%%%%%%%%%%%%%%%%%%%%%%%%%%%%%%%%

We are now ready to prove the existence issue of Theorem~\ref{maintheorem2}-$(ii)$. For $\hat \lambda ^N<\lambda < \lambda_1(\hn)$, let $\{ u_n \}$ be a minimizing sequence for $I(\lambda)$ such that
\begin{equation*}
%\label{minimizing}
\int_{\hn} \frac{u_n^2}{r^2} \ \dvh = 1 \quad \text{ and } \quad \int_{\hn} ( |\nabla_{\hn} u_n|^2   - \lambda u_n^2) \  \dvh=I(\lambda)+o(1) \quad\text{ as } n\rightarrow +\infty\,.
\end{equation*}
Then, up to a subsequence, the assumptions of Lemma~\ref{CCL} are satisfied and using Lemma~\ref{tightness} one can write

\begin{align*}
&  I(\lambda) \left( \int_{\hn} \frac{u^2}{r^2} \ \dvh + \nu_0   \right) \geq  \int_{\hn} ( |\nabla_{\hn} u|^2   - \lambda u^2) \  \dvh +  \mu_0 +o(1) \quad\text{ as } n\rightarrow +\infty\,,
\end{align*}
and recalling the definition of $I(\lambda)$ we get $I(\lambda)\nu_0\geq \mu_0\geq \frac{(N-2)^2}{4} \nu_0$. Since $\lambda >\hat \lambda ^N$, we know that $I(\lambda) < \frac{(N-2)^2}{4}$. Therefore, we get a contradiction unless $\mu_0 = \nu_0 = 0.$ Hence, we get

$$
\int_{\hn} \frac{u^2}{r^2} \ \dvh = 1  \quad \text{ and } \quad \int_{\hn} ( |\nabla_{\hn} u|^2   - \lambda u^2) \  \dvh=I(\lambda)\,.
$$
Namely, $u \neq 0$ is a minimizer for $I(\lambda)$. As already remarked in the proof of $(i)$, up to replacing $u$ with $|u|$ and by maximum principle arguments, we may always assume that any minimizer has constant sign in $\hn\setminus \{x_0\}$. Once this noted, the uniqueness follows immediately. Otherwise, by taking a suitable linear combination of two minimizers, one may define a minimizer which changes sign, a contradiction.  \par

 To conclude the proof of statement $(ii)$, we still have to show the lower bound for $I(\lambda)$. Using Theorem~\ref{improved-hardy}, it follows that for any $\hat \lambda ^N<\lambda < \lambda_1(\hn)$ we have
  $$
 I(\lambda) \geq  \left(\frac{1+2\sqrt{\lambda_{1}(\hn)-\lambda}}{2}\right)^2.
  $$
 Since $I(\lambda)$ is achieved, the inequality is strict otherwise we contradict the criticality issue of Theorem~\ref{improved-hardy}.

\medskip

\noindent\textbf{Proof of (iii).} The proof relies on the fact that operator $L : = - \Delta_{\hn}  - \frac{(N-1)^2}{4} - \frac{1}{4r^2}$
is subcritical. Indeed, we have already remarked in the proof of Theorem \ref{improved-hardy} that $\Phi(r) = \left( \frac{r}{\sinh r} \right)^{\frac{N-1}{2}} r^{\frac{2-N}{2}}$ is a positive supersolution to the equation $L u =0$ in $\hn \setminus\{x_0\}$ which is not a solution.
Hence, a minimizer for $I(\lambda) = \frac{1}{4}$ cannot exist.

%%%%%%%%%%%%%%%%%%%%%%%%%%%%%%%%%%%%%%%%%%%%%%%%%%%%%%%%%%%%%%%%%%%%%%%%%%%%%%%%%%%%%%%%%
%%%%%%%%%%%%%%%%%%%%%%%%%%%%%%%%%%%%%%%%%%%%%%%%%%%%%%%%%%%%%%%%%%%%%%%%%%%%%%%%%%%%%%%%%

\section{General Cartan-Hadamard manifolds: proof of Theorems \ref{general manifold2}, \ref{improved-hardy_general}}\label{proof_general_manifolds}
We first recall some known facts.
Let $(M,g)$ be a Riemannian manifold. Take a point (pole) $x_0 \in M$ and denote $Cut \{x_0\}$ the cut locus of $x_0.$ We can
define the polar coordinates in $M \setminus Cut^*\{x_0\},$ where $Cut^*\{x_0\} = Cut\{x_0\} \cup \{ x_0\}.$ Indeed, to any point
$x \in M \setminus Cut^*\{x_0\}$ we can associate the polar  radius $r(x) := \mbox{dist}(x,x_0)$ and the polar angle
$\theta \in \mathbb{S}^{N-1},$ such that the minimal geodesics from $x_0$ to $x$ starts at $x_0$ to the direction $\theta.$ The Riemannian metric $g$ in $M \setminus Cut^*\{x_0\}$ in the polar coordinates takes the form
\[
 {\rm d}s^2 = {\rm d} r^2 + a_{i,j}(r,\theta) {\rm d}\theta_{i} \theta_{j},
\]
where $(\theta_{1},\ldots, \theta_{N-1})$ are coordinates on $\mathbb{S}^{N-1}$ and $((a_{i,j}))_{i,j= 1, \ldots, N}$ is a positive
definite Matrix. Let $a: = \det(a_{i,j}),$ $B(x_0,\rho) = \{ x := (r, \theta): r  < \rho \}. $ Then in $M \setminus Cut^*\{x_0\}$ we have
 \begin{equation}\label{radlaplacian1}
 \Delta_{M} = \frac{1}{\sqrt{a}} \frac{\partial}{\partial r}\left( \sqrt{a} \frac{\partial}{\partial r} \right)
+ \Delta_{\partial B(x_0,r)} = \frac{\partial^2}{\partial r^2} + m(r, \theta) \frac{\partial}{\partial r}
+ \Delta_{\partial B(x_0,r)},
 \end{equation}
where $\Delta_{\partial B(x_0,r)}$ is the Laplace-Beltrami operator on the geodesic sphere $\partial B(x_0,r)$ and $m(r, \theta)$
is a smooth function on $(0, \infty) \times \mathbb{S}^{N-1}$ which represents the mean curvature of $\partial B(x_0,r)$ in the
radial direction. For radial functions, namely functions depending only on $r$, if $M$ is the Riemannian model $M_{\psi}$ defined in Section \ref{manifolds}, the above expression reads

 \begin{equation}\label{radlaplacian}
 \Delta_{M_\psi} = \frac{1}{(\psi(r))^{N-1}} \frac{\partial}{\partial r} \left[ (\psi(r))^{N-1} \frac{\partial }{\partial r}
 (r) \right] = \frac{\partial^2}{\partial r^2}+ (N-1) \frac{\psi^{\prime}(r)}{\psi(r)} \frac{\partial}{\partial r}\,,
 \end{equation}
where $\psi$ is as defined by \eqref{curvmod}.

\medskip
The following Hessian comparison principle relates \eqref{radlaplacian1} and \eqref{radlaplacian}:

\begin{lem}\label{comp}\cite{RGR,AG}
Let $M$ be as in Theorem \ref{general manifold2}. The mean curvature of $\partial B(x_0,r)$ in the
radial direction satisfies

 \[
 m(r, \theta) \geq (N-1) \frac{\psi^{\prime}(r)}{\psi(r)}\quad \text{for all } r>0 \text{ and } \theta\in\mathbb{S}^{N-1}.
 \]
 \end{lem}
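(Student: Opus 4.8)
The plan is to obtain Lemma~\ref{comp} from the classical Hessian (Laplacian) comparison theorem under an upper bound on the radial sectional curvature, which is precisely what is recorded in \cite{RGR,AG}; below I indicate the argument and, in particular, why the function $\psi$ introduced in \eqref{curvmod} is exactly the comparison function that appears. First I would note that, since $M$ is Cartan--Hadamard and admits a pole at $x_0$, the cut locus of $x_0$ is empty, so $r(x)=\mathrm{d}(x,x_0)$ is smooth on $M\setminus\{x_0\}$ and $\nabla r$ is the outward radial unit field. Along a geodesic sphere $\partial B(x_0,r)$ one has the identity $m(r,\theta)=\Delta_M r$, because $\mathrm{Hess}\,r(\nabla r,\cdot)=0$ and the restriction of $\mathrm{Hess}\,r$ to $T(\partial B(x_0,r))$ is the shape operator $S(r)$, whose trace is the mean curvature $m(r,\theta)$ entering \eqref{radlaplacian1}.

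Next, fixing a unit-speed radial geodesic $\gamma$ with $\gamma(0)=x_0$ and $\dot\gamma(0)=\theta$, I would use that $S(r)$ satisfies along $\gamma$ the matrix Riccati equation $S'(r)+S(r)^2+R_\gamma(r)=0$ on $\dot\gamma(r)^\perp$, where $R_\gamma(r)v:=R(v,\dot\gamma)\dot\gamma$, together with the singular initial behaviour $S(r)=r^{-1}\mathrm{Id}+o(r^{-1})$ as $r\to0^+$. In the Riemannian model $M_\psi$, formula \eqref{radlaplacian} (or a direct computation in the metric \eqref{meetric}) gives the corresponding quantities $S_\psi(r)=\tfrac{\psi'(r)}{\psi(r)}\mathrm{Id}$ and $R_\psi(r)=-\tfrac{\psi''(r)}{\psi(r)}\mathrm{Id}=G(r)\mathrm{Id}$, with the same behaviour $\tfrac{\psi'(r)}{\psi(r)}=r^{-1}+o(r^{-1})$ at the origin; moreover $\Delta_{M_\psi}r=(N-1)\tfrac{\psi'(r)}{\psi(r)}$, which shows that the inequality is sharp, equality holding when $M=M_\psi$. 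The curvature hypothesis \eqref{curv} reads precisely $\langle R_\gamma(r)v,v\rangle=K_R(\gamma(r))\,|v|^2\le -G(r)\,|v|^2$, i.e. $R_\gamma(r)\le -G(r)\,\mathrm{Id}$ as symmetric operators on $\dot\gamma(r)^\perp$; by the Riccati comparison principle this yields $S(r)\ge S_\psi(r)=\tfrac{\psi'(r)}{\psi(r)}\mathrm{Id}$ for every $r>0$, and taking traces gives $m(r,\theta)=\mathrm{tr}\,S(r)\ge (N-1)\tfrac{\psi'(r)}{\psi(r)}$, as claimed.

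The only delicate point is the direction of the comparison: an \emph{upper} bound on the curvature produces a \emph{lower} bound for the shape operator, the heuristic being that geodesic spheres in the more negatively curved model $M_\psi$ spread out at least as fast. Concretely, setting $T(r):=S(r)-S_\psi(r)$ and using $R_\psi-R_\gamma\ge0$ one finds $T'(r)\ge-\tfrac12\bigl(S(r)T(r)+T(r)S(r)+S_\psi(r)T(r)+T(r)S_\psi(r)\bigr)$; since $T(r)\to0$ as $r\to0^+$, faster than the common $r^{-1}$ singularity, a standard ODE comparison argument forces $T(r)\ge0$ for all $r>0$. As this is exactly the content of the Hessian comparison theorem in \cite{RGR,AG}, one may alternatively just invoke it and trace over $\dot\gamma^\perp$; the sketch above is included only to make the role of $\psi$ and of the sign of the curvature transparent.
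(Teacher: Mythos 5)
Your argument is correct, but note that the paper does not actually prove Lemma \ref{comp}: it is quoted verbatim from \cite{RGR,AG} as a known form of the Laplacian/Hessian comparison theorem, and the proof of Theorem \ref{general manifold2} simply invokes it. What you have written is a sound sketch of the standard proof that lies behind that citation: the identification $m(r,\theta)=\Delta_M r=\operatorname{tr}S(r)$ (legitimate here because $M$ is Cartan--Hadamard with a pole, so the cut locus is empty and $r$ is smooth away from $x_0$), the matrix Riccati equation for the shape operator along a radial geodesic with the common singular initial behaviour $S(r)=r^{-1}\mathrm{Id}+o(r^{-1})$, the observation that \eqref{curvmod} makes $\psi'/\psi$ the solution of the scalar Riccati equation with curvature exactly $-G$, and the Riccati/Eschenburg comparison giving $S\ge S_\psi$ when $R_\gamma\le R_\psi$. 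The direction of the inequality is the delicate point and you get it right. One cosmetic slip: you write $R_\psi=-\tfrac{\psi''}{\psi}\mathrm{Id}=G\,\mathrm{Id}$, whereas \eqref{curvmod} gives $\psi''/\psi=G$, so this should read $R_\psi=-G\,\mathrm{Id}$; the error does not propagate, since in the comparison step you correctly use $R_\gamma\le -G\,\mathrm{Id}=R_\psi$. In short, your proposal supplies a self-contained justification where the paper is content to cite the literature; both routes are legitimate, yours being more transparent about why the solution $\psi$ of \eqref{curvmod} is precisely the right comparison function, while the paper's choice keeps the focus on the functional inequalities.
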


\medskip\noindent \bf Proof of Theorem \ref{general manifold2}. \rm We follow the same idea of the proof of Theorem~\ref{improved-hardy}, we define the function $\Psi_{\lambda}(r) := r^{-\frac{N-2}{2}} \left({\frac{\psi(r)}{r}}\right)^{-\frac{N-1+\gamma_N(\lambda)}{2}}$ and on the Riemannian model $M_{\psi}$ we compute
$$
-\Delta_{M_\psi} \Psi_{\lambda}(r)= \frac{(\gamma_{N}(\lambda)+1)^2}{4}\frac{\Psi_{\lambda}}{r^2} +V_{\psi}^{\lambda}(r)\, \Psi_{\lambda}\quad \text{in } M_{\psi}\setminus\{x_0\}\,,
$$
where $V_{\psi}^{\lambda}$ is defined as in the statement of Theorem \ref{general manifold2}. On the other hand, %by condition \eqref{condition33} it follows that
we claim that
$$\Psi_{\lambda}'(r)=\frac{\Psi_{\lambda}}{2r\psi}\left((1+\gamma_N(\lambda))\psi(r)-(N-1+\gamma_N(\lambda)) r\psi'(r) \right)\leq 0 \quad \text{for } r>0\,.$$
In fact, this is clearly true for $r$ close to zero, whereas we notice that
\[\begin{aligned}
&\left((1+\gamma_N(\lambda))\psi(r)-(N-1+\gamma_N(\lambda)) r\psi'(r) \right)'\\ &=-[(N-1+\gamma_{N}(\lambda))r\,\psi''(r)+(N-2)\psi'(r)] \leq 0
\end{aligned}\]
since $\psi$ is increasing and convex. Combining this fact with Lemma~\ref{comp}, by \eqref{radlaplacian1}  we infer
$$
 -\Delta_{M}  \Psi_{\lambda} \geq -\Delta_{M_\psi} \Psi_{\lambda}\,.
$$
Hence, since $\Psi_{\lambda} \in H^{1}_{\loc} (M \setminus \{ x_0 \})$,  the Allegretto-Piepenbrink theorem \cite[Theorem~2.12]{Cycon} implies  \eqref{generalHardylamb}.

\par

The proof of the criticality of the operator $ -\Delta_{M_{\psi}} - \frac{(\gamma_{N}(\lambda)+1)^2}{4r^2} - V_{\psi}^{\lambda}(r)$ in $M_{\psi} \setminus \{ x_0 \}$  follows by arguments similar to the one applied in the proof of Theorem \ref{improved-hardy}, hence we omit the details. We only mention that one has to exploit the fact that, by \cite[Theorem 2.5]{BGG}, it is known that the operator $ -\Delta_{M_{\psi}} - \frac{1}{4r^2} - V_{\psi}^{\lambda_1}(r)$ is critical in $M_{\psi} \setminus \{ x_0 \}$ and the corresponding ground state is $\Phi(r)=\Psi_{\lambda_1}(r)$, where $\lambda_1=(N-1)^2/4$. Finally, we note that since $\psi(r)> r$ for $r>0$ by construction, we can show that the quotient $\left(\frac{\Psi_{\lambda}(r)}{\Phi(r)}\right)^2$ is bounded, exactly as done in \eqref{ e-17} in the hyperbolic setting.\qed

\medskip\noindent \bf Proof of Theorem \ref{improved-hardy_general}. \rm We proceed with steps similar to the ones given in the proof of Theorem \ref{improved-hardy_different}. First, we notice that by a direct computation the function $u_0:=\psi^{\frac{2-N}2}$ is a solution to the equation
\begin{equation}\label{barrier}
-\Delta_{M_\psi}u-\frac{(N-2)^2}{4\psi^2}u-U_\psi\,u=0.
\end{equation}
We look for a second independent, positive solution to the same equation. If we set $v=\psi^{\frac{N-2}2}w$ with $w$ satisfying \eqref{barrier}, by another direct computation it turns out that $v$ must satisfy
\[
v''+\frac{\psi'}{\psi}v'=0.
\]
This yields that another positive, independent solution of equation \eqref{barrier} is found by taking, under the running assumptions,  \[v(r)=\frac1{\Theta(r)}\] with $\Theta$ as defined in \eqref{v}.
Notice that both solutions are decreasing. Hence they give rise to supersolutions of the corresponding equation on $M$ by Hessian comparison. We then perform the construction of the optimal weight given in \cite{pinch} starting from the two positive solutions $u_0$ and $u_0v$, see \cite{pinch}, thus yielding the stated inequality on $M$ and criticality on $M_\psi$.

\begin{rem}
It turns out from \cite[Proposition~3.1]{AG} that the subcriticality of $-\Delta_{M_\psi}$ in $M_{\psi}$ is equivalent to the following

\begin{equation}\label{sub_critical}
\int_{r}^{+\infty} \frac{1}{(\psi(s))^{N-1}} \, {\rm d}s < + \infty.
\end{equation}
Moreover, under \eqref{curv}, it can be shown easily that \eqref{sub_critical} is a weaker assumption than \eqref{v}. Therefore, assuming \eqref{sub_critical} we can
associate a natural Hardy weight for the operator $-\Delta_{M_\psi}.$ It is easy to see $u_0 = 1$ and $u_1 = \int_{r}^{+\infty} \frac{1}{(\psi(s))^{N-1}} \, {\rm d}s$ are
two independent positive solutions of

$$
-\Delta_{M_\psi} = 0 \quad \mbox{in} \quad M_{\psi}.
$$
Therefore using \cite{pinch}, an optimal Hardy weight $\tilde W_{\psi}$ for the operator $-\Delta_{M_\psi}$ is given by

$$
\tilde W_{\psi} = \frac{1}{4} \left( \dfrac{(\psi(r))^{1-N}}{\int_{r}^{+\infty} (\psi(s))^{1-N} {\rm d}s} \right)^2.
$$
In the hyperbolic space $\hn,$ $\tilde W_{\sinh}$ has the following asymptotic which should be compared with the statement of Theorem~\ref{improved-hardy} for
$\lambda=\lambda_{1}(\hn)$:
$$\tilde W_{\sinh}(r) \;\substack{\sim\\ r \rightarrow 0} \frac{(N-2)^2}{4 r^2} \ \mbox{ and } \ \tilde W_{\sinh}(r) \;\substack{\sim\\ r \rightarrow \infty} \, \frac{(N-1)^2}{4} + \frac{(N-1)(N+3)}{N+1} e^{-2r} + \circ(e^{-2r}).
$$
\end{rem}

%%%%%%%%%%%%%%%%%%%%%%%%%%%%%%%%%%%%%%%%%%%%%%%%%%%%%%%%%%%%%%%%%%%%%%%%%%%%%%%%%%%%%%%%%
%%%%%%%%%%%%%%%%%%%%%%%%%%%%%%%%%%%%%%%%%%%%%%%%%%%%%%%%%%%%%%%%%%%%%%%%%%%%%%%%%%%%%%%%%

\section{Weighted Hardy and Rellich inequalities: proof of Theorems~\ref{Rellich_lemma_1} and \ref{Rellich_lemma_2}}

\subsection{ Proof of Theorem \ref{Rellich_lemma_1}.}

We consider $u \in C_{c}^{\infty}(\hn \setminus \{ x_0 \})$ and define $ \dfrac{u(x)}{r^{\alpha}} = \Psi(x) v(x).$ Then we compute

$$
\frac{|\nabla_{\hn} u|^2}{r^{2\alpha}} = |\nabla_{\hn} \Psi|^2 v^2 + |\nabla_{\hn} v|^2 \, \Psi^2 + 2\, v \, \Psi  \, \langle \nabla_{\hn} \Psi,  \nabla_{\hn} v \rangle
+ 2\alpha \frac{u_{r} u}{r^{2 \alpha + 1}} - \alpha^2 \frac{u^2}{r^{2\alpha + 2}}.
$$
Now integrating above and by integration by parts we obtain

\begin{align*}
 \int_{\hn} \frac{|\nabla_{\hn} u|^2}{r^{2\alpha}}\, {\rm d}v_{\hn}  & = \int_{\hn} \frac{(- \Delta_{\hn} \Psi)}{\Psi} \frac{u^2}{r^{2 \alpha}} \, {\rm d}v_{\hn}
+  \int_{\hn} |\nabla_{\hn} v|^2 \, \Psi^2 \, {\rm d}v_{\hn}\\ &- \alpha^2 \int_{\hn} \frac{u^2}{r^{2 \alpha + 2}} \, {\rm d}v_{\hn} \\
& +  \alpha(2 \alpha + 1) \int_{\mathbb{S}^{N-1}} \left( \int_{0}^{\infty} \frac{u^2}{r^{2 \alpha + 2}} \, (\sinh r)^{N-1} \, {\rm d}r \right) {\rm d} \sigma  \\
& -(N-1) \alpha \int_{\mathbb{S}^{N-1}} \left( \int_{0}^{\infty} \frac{u^2}{r^{2 \alpha + 1}} \, \coth r \, (\sinh r)^{N-1} \, {\rm d}r \right) {\rm d} \sigma \\
& \geq  \int_{\hn} \frac{(- \Delta_{\hn} \Psi)}{\Psi} \frac{u^2}{r^{2\alpha}} \, {\rm d}v_{\hn} + (\alpha^2 -(N-2) \alpha) \int_{\hn} \frac{u^2}{r^{2\alpha + 2}} \, {\rm d}v_{\hn} \\
& -(N-1) \alpha \int_{\hn} \frac{g(r)}{r^{2 \alpha}}\, u^2 \, {\rm d}v_{\hn}.
\end{align*}
Here in particular we insert $\Psi := \Psi_{N-2}$, as defined in the proof of Theorem~\ref{improved-hardy}, in the above to obtain the desired result \eqref{eq_Hardy_lemma_1}. The sharpness of the constant follows
immediately by exploiting the test function of type  $\Phi(r) := r^{\frac{2-N}{2}}$ combining with a proper cut off as in the case of  classical Hardy inequality on the
hyperbolic space (see \cite[Theorem~3.1]{Yang}).

\medskip
 \subsection{ Proof of Theorem \ref{Rellich_lemma_2}.}

 The proof is based on a suitable combination of Theorem \ref{Rellich_lemma_1} with some ideas taken from the proof of \cite[Theorem~3.1]{Kombe2}. More precisely, starting from the inequality
$$-\Delta_{\hn} \frac{1}{r^{2\alpha}} \geq \frac{2\alpha(N-2 - 2 \alpha)}{r^{2 \alpha + 2}} \text{ for }  \alpha>0 \,,$$
multiplying both sides by $u^2\in C_{c}^{\infty}(\hn \setminus \{ x_0 \})$ and integrating over $\hn$, one gets
$$- \int_{\hn} \frac{u \Delta_{\hn} u}{r^{2\alpha}}\, {\rm d}v_{\hn} \geq   \int_{\hn} \frac{|\nabla_{\hn} u|^2}{r^{2\alpha}}\, {\rm d}v_{\hn} +
 \alpha( N- 2 - 2 \alpha)  \int_{\hn} \frac{ u^2}{r^{2 \alpha + 2}}\, {\rm d}v_{\hn}\,.$$
Then, for any $\varepsilon>0$, by Young's inequality there holds
$$\int_{\hn} \frac{ |\Delta_{\hn} u|^2}{r^{2 \alpha - 2}}\, {\rm d}v_{\hn} \geq  4\varepsilon  \int_{\hn} \frac{|\nabla_{\hn} u|^2}{r^{2\alpha}}\, {\rm d}v_{\hn}
 + (4\varepsilon \alpha (N-2 - 2\alpha)  - 4\varepsilon^2)\int_{\hn} \frac{ u^2}{r^{2 \alpha + 2}}\, {\rm d}v_{\hn}\,.$$
A combination of the above inequality with \eqref{eq_Hardy_lemma_1} yields
 \begin{align*}
 \int_{\hn}  \frac{|\Delta_{\hn}u|^2}{{r^{2 \alpha - 2}} }\, {\rm d}v_{\hn}& \geq  4\varepsilon \left( \frac{(N-2 - 2 \alpha)^2}{4}+ \alpha(N- 2 - 2 \alpha) -\varepsilon \right)
 \int_{\hn} \frac{ u^2}{r^{2 \alpha + 2}}\, {\rm d}v_{\hn}\\
 &+ 4 \varepsilon (N-2) \int_{\hn} \frac{u^2}{r^{2 \alpha}} \, {\rm d}v_{\hn}\\ &+ 4 \varepsilon  \left( \frac{(N-2)(N-3)}{2}  - (N-1) \alpha \right) \int_{\hn} \frac{g(r)}{r^{2\alpha}} u^2 \, {\rm d}v_{\hn}.
 \end{align*}
 Finally, by maximizing the coefficient in front of the first term on the right hand side, one gets $\varepsilon=\frac{(N- 2 - 2\alpha)(N-2 + 2 \alpha)}{8}$. By inserting this value in the above inequality, the proof follows.

%%%%%%%%%%%%%%%%%%%%%%%%%%%%%%%%%%%%%%%%%%%%%%%%%%%%%%%%%%%%%%%%%%%%%%%%%%%%%%%%%%%%%%%%
%%%%%%%%%%%%%%%%%%%%%%%%%%%%%%%%%%%%%%%%%%%%%%%%%%%%%%%%%%%%%%%%%%%%%%%%%%%%%%%%%%%%%%%%

\section*{Appendix: Improved hardy inequality in two dimensional Euclidean Space}
%\label{apndx}
This section is devoted to state certain improved Hardy inequalities in two dimensional Euclidean space. The results can be obtained from a direct application of Theorem \ref{critical2} after suitable transformations.

Let $B$ be the Euclidean unit ball. From Theorem \ref{improved-hardy} and conformal invariance of the Dirichlet norm in dimension two, i.e.,
$\int_{\hnn} |\nabla_{\hnn} u|^2 \, {\rm d}v_{\hnn} = \int_{B} |\nabla u|^2\, {\rm d}x,$ where ${\rm d}x$ denotes the Euclidean volume element, we derive the following result.

\begin{cor}
%\label{application2}
For all $ \lambda \leq  \lambda_{1}(\hnn)=\frac{1}{4} $ and all $ u \in C^{\infty}_{0}(B)$ the following inequality holds
\begin{align}\label{ball}
&\int_{B} |\nabla u|^2 \, {\rm d} x - \lambda \int_{B} \left( \frac{2}{ 1 - |x|^2} \right)^2 u^2 \, {\rm d}x \geq \\  \notag
&  \frac{(\sqrt{1-4\lambda}+1)^2}{4}
\int_{B} \left(  \frac{1}{\left(\log\left(\frac{1-|x|}{1+|x|}\right)\right)^2} - \left(\frac{1-|x|^2}{2|x|}\right)^2   \right)\,  \left( \frac{2}{ 1 - |x|^2} \right)^2 u^2 \ {\rm d}x,  \\ \notag
&+\frac{(\sqrt{1-4\lambda})(\sqrt{1-4\lambda}+1)}{2}  \int_{B} \tilde g(|x|)\,  \left( \frac{2}{ 1 - |x|^2} \right)^2 u^2 \, {\rm d}x \,,
\end{align}
where ${\rm d}x$ denotes the Euclidean volume and $\tilde g(|x|):=g\left( \log\left(\frac{1-|x|}{1+|x|}\right)\right)$ with $g>0$ as defined in Theorem  \ref{improved-hardy}. In particular, for $\lambda=\lambda_{1}(\hnn)=\frac{1}{4}$, inequality \eqref{ball} reads as
$$\begin{aligned}
&\int_{B} |\nabla u|^2 \, {\rm d}x - \frac{1}{4} \int_{B} \left( \frac{2}{ 1 - |x|^2} \right)^2 u^2 \, {\rm d}x\\ &\geq  \frac{1}{4}
\int_{B} \left(  \frac{1}{\left(\log\left(\frac{1-|x|}{1+|x|}\right)\right)^2} - \left(\frac{1-|x|^2}{2|x|}\right)^2   \right)\,  \left( \frac{2}{ 1 - |x|^2} \right)^2 u^2 \, {\rm d}x,
\end{aligned}$$
and the constant $1/4$ in the right hand side in the above inequality is sharp.
\end{cor}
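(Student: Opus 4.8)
The plan is to transplant the $N=2$ case of Theorem~\ref{improved-hardy} --- and, for the endpoint, Corollary~\ref{critical2} --- from the Poincar\'e disc model of $\hnn$ onto the Euclidean ball $B$, exploiting the conformal invariance of the Dirichlet integral in dimension two. Recall that $\hnn$ is isometric to $B$ equipped with the metric $g=\bigl(\tfrac{2}{1-|x|^2}\bigr)^2|{\rm d}x|^2$ and pole $x_0=0$, so that ${\rm d}v_{\hnn}=\bigl(\tfrac{2}{1-|x|^2}\bigr)^2\,{\rm d}x$ and, since $|\nabla_g v|_g^2\,{\rm d}v_g=|\nabla v|^2\,{\rm d}x$ for any conformal metric in two dimensions, $\int_{\hnn}|\nabla_{\hnn}u|^2\,{\rm d}v_{\hnn}=\int_B|\nabla u|^2\,{\rm d}x$ (the identity already recalled before the statement). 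The geodesic distance to the pole is $r=d(x,x_0)=\log\tfrac{1+|x|}{1-|x|}$, equivalently $|x|=\tanh\tfrac r2$.

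First I would record the elementary consequences of this change of variable: $\sinh r=\tfrac{2|x|}{1-|x|^2}$, hence $\tfrac1{\sinh^2 r}=\bigl(\tfrac{1-|x|^2}{2|x|}\bigr)^2$; $r^2=\bigl(\log\tfrac{1-|x|}{1+|x|}\bigr)^2$ (only the square enters, so the sign is immaterial); and, since $g$ is an even function, $g(r)=g\bigl(\log\tfrac{1-|x|}{1+|x|}\bigr)=\tilde g(|x|)$. Putting $N=2$ in the potential $V_\lambda$ displayed just before Corollary~\ref{critical2}, namely $V_\lambda(r)=\tfrac{(\sqrt{1-4\lambda}+1)^2}{4}\bigl(\tfrac1{r^2}-\tfrac1{\sinh^2 r}\bigr)+\tfrac{\sqrt{1-4\lambda}(\sqrt{1-4\lambda}+1)}{2}\,g(r)$, and substituting these identities, $V_\lambda(r(x))$ becomes exactly the weight --- bracket plus the $\tilde g$ term --- multiplying $\bigl(\tfrac{2}{1-|x|^2}\bigr)^2u^2$ on the right-hand side of \eqref{ball}.

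Then I would simply rewrite inequality \eqref{improved-poinc-lambda} with $N=2$, i.e. $\int_{\hnn}|\nabla_{\hnn}u|^2\,{\rm d}v_{\hnn}-\lambda\int_{\hnn}u^2\,{\rm d}v_{\hnn}\ge\int_{\hnn}V_\lambda\,u^2\,{\rm d}v_{\hnn}$, term by term in the Euclidean picture via the dictionary above, obtaining \eqref{ball} first for $u\in C^\infty_c(B\setminus\{0\})$. To reach all $u\in C^\infty_0(B)$ one uses that a point has zero $H^1$-capacity in dimension two, so $C^\infty_c(B\setminus\{0\})$ is dense in $H^1_0(B)$; passing to the limit is harmless here because, in contrast with the case $N\ge 3$, the $\tfrac1{r^2}$ singularity of $V_\lambda$ is cancelled by $\tfrac1{\sinh^2 r}$ (both behave like $\tfrac1{r^2}-\tfrac13+O(r^2)$ near $0$), so each integrand on the right of \eqref{ball} stays bounded near $x_0$ and all the relevant integrals are continuous along the approximation. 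Specializing $\lambda=\tfrac14$ gives the displayed endpoint inequality. Finally, sharpness of the constant $\tfrac14$ there is inherited from the sharpness clause of Corollary~\ref{critical2}: an admissible constant $C>\tfrac14$ in \eqref{ball} for $\lambda=\tfrac14$ would, reversing the isometry and the conformal identity, produce on $\hnn$ the inequality of Corollary~\ref{critical2} with the same $C>\tfrac14$ in front of $\tfrac1{r^2}-\tfrac1{\sinh^2 r}$, a contradiction.

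The argument is essentially bookkeeping; the only point requiring a little care --- the main obstacle, such as it is --- is the density/passage-to-the-limit step, which, as noted, reduces to the boundedness near the pole of the transplanted weights, a feature special to dimension two.
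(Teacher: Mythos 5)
Your proposal is correct and follows exactly the route the paper intends: the paper derives this corollary purely by invoking the conformal invariance of the two-dimensional Dirichlet integral together with the disc-model dictionary $r=\log\frac{1+|x|}{1-|x|}$, $\sinh r=\frac{2|x|}{1-|x|^2}$, ${\rm d}v_{\hnn}=\bigl(\tfrac{2}{1-|x|^2}\bigr)^2{\rm d}x$ applied to Theorem~\ref{improved-hardy} and Corollary~\ref{critical2}. Your write-up in fact supplies more detail than the paper does (the capacity/density step for passing to all of $C^\infty_0(B)$ and the explicit cancellation of the $1/r^2$ singularity near the pole), and those details are accurate.
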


\begin{rem}
The inequality \eqref{ball} can be compare with \emph{optimal Leray inequality},  (cf. \cite[Example~13.2]{pinch}). Note that the weight in the left hand side of \eqref{ball} has a
singularity only at the boundary of the ball but on the other hand we have a subcriticality of the resulting operator unlike in the case of classical
\emph{Leray inequality}.
\end{rem}

By considering the upper half space model for $\hnn$, namely $\mathbb{R}^{2}_{+} = \{ (x, y) \in \mathbb{R} \times \mathbb{R}^{+} \} $ endowed with the Riemannian metric $\frac{\delta_{ij}}{y^2}$.
Theorem \ref{improved-hardy}  yields the following improved Hardy-Maz'ya-type inequality in the half space in dimension two:

\begin{cor}
%\label{application1A}
For all $ \lambda \leq  \lambda_{1}(\hnn)=\frac{1}{4} $ and all $u \in C_{c}^{\infty} (\hnn \setminus \{ x_0 \})$ there holds
\begin{equation}
\begin{aligned}\label{mazya1A}
&\int_{\mathbb R^{+}} \int_{\mathbb{R}} |\nabla u|^2 \, {\rm d}x \, {\rm d}y - \lambda \int_{\mathbb{R}^{+}} \int_{\mathbb{R}} \frac{u^2}{y^2} \, {\rm d}x \, {\rm d}y\\  & \geq
  \frac{(\sqrt{1-4\lambda}+1)^2}{4} \int_{\mathbb{R}^{+}} \int_{\mathbb{R}}  \left( \frac{1}{ d^2} - \frac{1}{\sinh^2 d} \right) \frac{u^2}{y^2} \, {\rm d}x \, {\rm d}y \\ \notag
&+\frac{(\sqrt{1-4\lambda})(\sqrt{1-4\lambda}+1)}{2} \int_{\mathbb{R}^{+}} \int_{\mathbb{R}} g(d)\, \frac{u^2}{y^2} \, {\rm d}x \, {\rm d}y ,
\end{aligned}
\end{equation}
where $(x, y) \in \mathbb{R} \times \mathbb{R}^{+}$, $d=d(x,y): = \cosh^{-1} \left( 1 + \frac{(y - 1)^2 + |x|^2}{2 y} \right)$ and $g>0$ is as defined in Theorem  \ref{improved-hardy}. \par
In particular, for $\lambda=\lambda_{1}(\hnn)=\frac{1}{4}$, inequality \eqref{mazya1A} reads as
\begin{equation}\label{mazya1}
\int_{\mathbb R^{+}} \int_{\mathbb{R}} |\nabla u|^2 \, {\rm d}x \, {\rm d}y - \frac{1}{4} \int_{\mathbb{R}^{+}} \int_{\mathbb{R}} \frac{u^2}{y^2} \, {\rm d}x \, {\rm d}y \geq \frac{1}{4} \int_{\mathbb{R}^{+}} \int_{\mathbb{R}}  \left( \frac{1}{ d^2} - \frac{1}{\sinh^2 d} \right) \frac{u^2}{y^2} \, {\rm d}x \, {\rm d}y,
\end{equation}
and the constant $1/4$ in the right hand side of \eqref{mazya1} is sharp.
\end{cor}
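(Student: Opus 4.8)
The plan is to obtain this Corollary as a direct transcription of the two-dimensional case of Theorem~\ref{improved-hardy} (equivalently, of Corollary~\ref{critical2}) to the upper half-plane model of $\hnn$. Recall that $\mathbb{R}^{2}_{+}=\{(x,y)\in\mathbb{R}\times\mathbb{R}^{+}\}$ endowed with the metric $\mathrm{d}s^{2}=\frac{\mathrm{d}x^{2}+\mathrm{d}y^{2}}{y^{2}}$ is an isometric model of $\hnn$, with Riemannian volume element $\mathrm{d}v_{\hnn}=\frac{\mathrm{d}x\,\mathrm{d}y}{y^{2}}$, and that this metric is conformal to the Euclidean one. First I would fix the pole to be $x_{0}=(0,1)$ and record the classical formula for the geodesic distance between $(0,1)$ and $(x,y)$, namely $\cosh r=1+\frac{(y-1)^{2}+|x|^{2}}{2y}$, so that $r=d(x,y)=\cosh^{-1}\!\big(1+\frac{(y-1)^{2}+|x|^{2}}{2y}\big)$, which is exactly the function $d$ in the statement.

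Next I would invoke the conformal invariance of the Dirichlet integral in dimension two, which gives $\int_{\hnn}|\nabla_{\hnn}u|^{2}\,\mathrm{d}v_{\hnn}=\int_{\mathbb{R}^{2}_{+}}|\nabla u|^{2}\,\mathrm{d}x\,\mathrm{d}y$, the gradient and measure on the right being the Euclidean ones, together with the elementary identities $\int_{\hnn}F(r)\,u^{2}\,\mathrm{d}v_{\hnn}=\int_{\mathbb{R}^{2}_{+}}F(d(x,y))\,u^{2}\,\frac{\mathrm{d}x\,\mathrm{d}y}{y^{2}}$ valid for any Borel function $F$, and in particular $\int_{\hnn}u^{2}\,\mathrm{d}v_{\hnn}=\int_{\mathbb{R}^{2}_{+}}\frac{u^{2}}{y^{2}}\,\mathrm{d}x\,\mathrm{d}y$. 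Plugging these into inequality \eqref{improved-poinc-lambda} specialized to $N=2$ — for which $\gamma_{2}(\lambda)=\sqrt{1-4\lambda}$ and the terms with $\frac{1}{r^{2}}$ and $\frac{1}{\sinh^{2}r}$ combine into $\frac{(\sqrt{1-4\lambda}+1)^{2}}{4}\big(\frac{1}{r^{2}}-\frac{1}{\sinh^{2}r}\big)$, as noted just before Corollary~\ref{critical2} — and replacing $r$ by $d(x,y)$ everywhere yields exactly \eqref{mazya1A}. Taking $\lambda=\lambda_{1}(\hnn)=\frac14$ kills the $g$-term, since then $\sqrt{1-4\lambda}=0$, and produces \eqref{mazya1}.

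For the sharpness of the constant $\frac14$ in \eqref{mazya1} I would argue by contradiction using the criticality part of Corollary~\ref{critical2}. The identification above is a bijection between $C_{c}^{\infty}(\hnn\setminus\{x_{0}\})$ and $C_{c}^{\infty}(\mathbb{R}^{2}_{+}\setminus\{(0,1)\})$ under which the quadratic forms $\int|\nabla_{\hnn}u|^{2}\,\mathrm{d}v_{\hnn}$, $\int u^{2}\,\mathrm{d}v_{\hnn}$ and $\int\big(\frac{1}{r^{2}}-\frac{1}{\sinh^{2}r}\big)u^{2}\,\mathrm{d}v_{\hnn}$ transform into the corresponding three forms appearing in \eqref{mazya1}; hence an inequality of the form \eqref{mazya1} with some constant $C>\frac14$ would be equivalent to \eqref{2dimension} with the same $C$, contradicting the sharpness established in Corollary~\ref{critical2}.

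There is no genuine analytic obstacle here, since the whole content is already contained in Theorem~\ref{improved-hardy} and in the conformal invariance of the planar Dirichlet energy. The only step that needs care is the bookkeeping of the conformal change of coordinates — checking the distance formula for $d$ and that each weighted $L^{2}$-term transforms as claimed — so that the constants in \eqref{mazya1A} come out exactly as stated.
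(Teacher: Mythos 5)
Your proposal is correct and follows essentially the same route as the paper: the Corollary is obtained by transplanting Theorem~\ref{improved-hardy} (with $N=2$, $\gamma_2(\lambda)=\sqrt{1-4\lambda}$, and the $r^{-2}$ and $\sinh^{-2}r$ terms combined as in Corollary~\ref{critical2}) to the upper half-plane model via the conformal invariance of the planar Dirichlet integral, the identity $\mathrm{d}v_{\hnn}=y^{-2}\,\mathrm{d}x\,\mathrm{d}y$, and the explicit distance formula for the pole $(0,1)$, with sharpness of $1/4$ inherited from the criticality statement. Your write-up merely makes explicit the bookkeeping that the paper leaves implicit.
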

Hence, \eqref{mazya1} provides an optimal nonstandard remainder term for the Hardy-Maz'ya inequality in dimension two, see \cite{BGG} for the case $N\geq 3$.

\medskip
\noindent {\sc Acknowledgment.}  E.B. has been partially supported by the Research Project FIR (Futuro in Ricerca) 2013 \emph{Geometrical and qualitative aspects of PDE's}. D.G. has been partially supported at the Technion by a fellowship of the Israel Council for Higher Education. G.G.~has been partially supported by the PRIN project {\em Equazioni alle derivate parziali di tipo ellittico e parabolico: aspetti geometrici, disuguaglianze collegate, e applicazioni} (Italy). Both E.B. and G.G. ~have also been supported by the Gruppo Nazionale per l'Analisi Matematica, la Probabilit\`a e le loro Applicazioni (GNAMPA) of the Istituto Nazionale di Alta Matematica (INdAM, Italy).  Y.P. and D.G. acknowledge the support of the Israel Science Foundation (grants No. 970/15) founded by the Israel Academy of Sciences and Humanities.

%%%%%%%%%%%%%%%%%%%%%%%%%%%%%%%%%%%%%%%%%%%%%%%%%%%%%%%%%%%%%%%%%%%%%%%%%%%%%%%%%%%%%%%%
%%%%%%%%%%%%%%%%%%%%%%%%%%%%%%%%%%%%%%%%%%%%%%%%%%%%%%%%%%%%%%%%%%%%%%%%%%%%%%%%%%%%%%%%

%\par\bigskip\noindent
%\textbf{Acknowledgments.}  The first authors is partially supported by the Research Project FIR (Futuro in Ricerca) 2013 \emph{Geometrical and qualitative aspects of PDE's} and she is member of the Gruppo Nazionale per l'Analisi Matematica, la Probabilit\`a e le loro Applicazioni (GNAMPA) of the Istituto Nazionale di Alta Matematica (INdAM).

\end{document}